\newtheorem{theorem}{Theorem}
\theoremstyle{plain}
\newtheorem{corollary}{Corollary}
\newtheorem{definition}{Definition}
\newtheorem{example}{Example}
\newtheorem{lemma}{Lemma}
\newtheorem{notation}{Notation}
\newtheorem{proposition}{Proposition}
\newtheorem{remark}{Remark}
\numberwithin{equation}{section}
\begin{document}
\title[Deformations of Levi flat structures]{Deformations of Levi flat
structures in smooth manifolds}
\author{Paolo de Bartolomeis}
\address{Universit\`{a} di Firenze\\
Dipartimento di Matematica Applicata "G. Sansone" \\
Via di Santa Marta 3 \\
I-50139 Firenze, Italia }
\email{paolo.debartolomeis@unifi.it}
\author{Andrei Iordan}
\address{Institut de Math\'{e}matiques \\
UMR 7586 du CNRS, case 247 \\
Universit\'{e} Pierre et Marie-Curie \\
4 Place Jussieu \\
75252 Paris Cedex 05\\
France}
\email{iordan@math.jussieu.fr}
\date{May, 22, 2012}
\subjclass{Primary 32G05, 32G07, 32G08, 32G10; Secondary, 17B70, 51M99,
32Q99, 58A30}
\keywords{Levi flat structures, Differential Graduate Lie Algebras,
Maurer-Cartan equation, Complex Lie algebras of derivation type, Nijenhuis
tensor }

\begin{abstract}
We study intrinsic deformations of Levi flat structures on a smooth
manifold. A Levi flat structure on a smooth manifold $L$ is a couple $\left(
\xi ,J\right) $ where $\xi \subset T\left( L\right) $ is an integrable
distribution of codimension $1$ and $J:\xi \rightarrow \xi $ is a bundle
automorphism which defines a complex integrable structure on each leaf. A
deformation of a Levi flat structure $\left( \xi ,J\right) $ is a smooth
family $\left\{ \left( \xi _{t},J_{t}\right) \right\} _{t\in ]-\varepsilon
,\varepsilon \lbrack }$ of Levi flat structures on $L$ such that $\left( \xi
_{0},J_{0}\right) =\left( \xi ,J\right) $. We define a complex whose
cohomology group of order $1$ contains the infinitesimal deformations of a
Levi flat structure. In the case of real analytic Levi flat structures, this
cohomology group is $H^{1}\left( \mathcal{Z}^{\ast }\left( L\right) ,\delta
\right) \times H^{1}\left( \Lambda _{J}^{0,\ast }\left( \xi \right) \otimes
\xi ,\overline{\partial }_{J}\right) $ where $\left( \mathcal{Z}^{\ast
}\left( L\right) ,\delta ,\left\{ \cdot ,\cdot \right\} \right) $ is the
DGLA associated to $\xi $.
\end{abstract}

\maketitle

\section{\protect\bigskip Introduction}

Let $\Omega $ be a domain with $C^{2}$ boundary in $\mathbb{C}^{n}$, $%
n\geqslant 2$, $\Omega =\left\{ z\in U:\rho \left( z\right) <0\right\} $
where $\rho $ is a $C^{2}$ function defined in a neighborhood $U$ of $%
\partial \Omega $ such that $d\rho \neq 0$ on $\partial \Omega $. The Levi
form introduced by E. E.\ Levi in \cite{Levi10} is the Hermitian form 
\begin{equation*}
\mathcal{L}_{\rho }\left( z,w\right) =\dsum\limits_{i,j=1}^{n}\frac{\partial
^{2}\rho }{\partial z_{i}\partial \overline{z}_{j}}\left( z\right) w_{i}%
\overline{w}_{j},\ z\in \partial \Omega ,\ w\in T_{z}^{\mathbb{C}}\left(
\partial \Omega \right)
\end{equation*}%
where $T_{z}^{\mathbb{C}}\left( \partial \Omega \right) =T_{z}\left(
\partial \Omega \right) \cap JT_{z}\left( \partial \Omega \right) $ is the
maximal complex subspace contained in the tangent space $T_{z}\left(
\partial \Omega \right) $ at $z$ to $\partial \Omega $ and $J$ is the
standard complex structure of $\mathbb{C}^{n}$. The semipositivity of the
Levi form characterizes the domains of holomorphy of $\mathbb{C}^{n}$ (\cite%
{Oka53}, \cite{Bremermann54}, \cite{Norguet54}).

A special situation occurs when the Levi form vanishes i.e. $\partial \Omega 
$ is Levi flat. This case is related to a foliation of $\partial \Omega $ by
complex hypersurfaces, as it was firstly remarked by E.\ Cartan in \cite%
{Cartan32}. \ In fact the following theorem is implicit in \cite{Cartan32}:
a real analytic hypersurface in $\mathbb{C}^{n}$ is Levi flat if and only if
it is locally biholomorphic to a real hyperplane in $\mathbb{C}^{n}$. This
result was generalized for smooth hypersurfaces by F. Sommer \cite{Sommer59}%
: a smooth real hypersurface $L$ in a complex manifold $M$ is Levi flat if
and only if the distribution $\xi =TL\cap JTL$ is integrable, where $J$ is
the complex structure of $M$.

These notions have an intrinsic equivalent: a Levi flat structure on a
smooth manifold $L$ is a couple $\left( \xi ,J\right) $ where $\xi \subset
T\left( L\right) $ is an integrable distribution of codimension $1$ and $%
J:\xi \rightarrow \xi $ is a bundle automorphism which defines a complex
integrable structure on each leaf.

It was proved by W. Lickorish in \cite{Lickorish65} (and in an unpublished
paper by S. Novikov and H. Zieschang) that any compact orientable 3-manifold
has a foliation of codimension $1$. J. Wood proved in \cite{Wood69} that any
compact 3-manifold has a transversally orientable foliation of codimension
1. It follows that any compact orientable 3-manifold admits a Levi flat
structure. In upper dimensions the situation is more complicated and it
seems that the problem of the existence of a Levi flat structure on $\mathbb{%
S}^{2n+1}$, $n\geqslant 2$, is still open (see \cite{Meersseman02} and \cite%
{Meersseman11}).

In this paper we study the deformations of Levi flat structures. The theory
of deformations of complex manifolds was intensively studied from the 50s
beginning with the famous results of Kodaira and Spencer \cite%
{KodairaSpencer58} (see for ex. \cite{Kodaira05},\ \cite{Voisin02}). In \cite%
{Nijenhuis66}, Nijenhuis ans Richardson proved that the deformations of
complex structures are given by solutions of the Maurer-Cartan equation in a
graded Lie algebra by using a theory initiated by \ Gerstenhaber \cite%
{Gerstenhaber64} . This theory was developped following ideas of Deligne by
Goldman and Millson \cite{Goldman88} and in more general situations by M.
Kontsevich \cite{Kontsevich97}, \cite{Kontsevich03}.

It is thus interesting to investigate the deformation theory of other
structures involving complex manifolds via the Maurer-Cartan equation in an
adapted DGLA. This was done by de Bartolomeis and Meylan \cite%
{BartolomeisMeylan10} for strictly pseudoconvex CR structures of
hypersurface type on a contact manifold.

In \cite{Bartolomeis10} the autors studied deformations of Levi flat
hypersurfaces $L$ in compact complex manifolds $M$ . In this case the
complex structure on the leaves is induced by the complex structure of $M$.

As the Levi flat hypersurfaces are characterized by the integrability of the
Levi distribution, we studied firstly in \cite{Bartolomeis10} deformations
of integrable distributions of codimension 1 on smooth manifolds. Thus, we
defined a Differential Graded Lie Algebra (DGLA) $\left( \mathcal{Z}^{\ast
}\left( L\right) ,\delta ,\left\{ \cdot ,\cdot \right\} \right) $ associated
to an integrable distribution of codimension 1 such that the deformations of
this distribution are given by solutions of Maurer-Cartan equation in this
algebra. Then we considered a smooth Levi flat hypersurface $L$ in a complex
manifold and we gave a parametrization of families of smooth hypersurfaces
near $L$ such that the Levi flat deformations are given by the solutions of
the Maurer-Cartan equation in the DGLA associated to the Levi foliation.
Here a Levi-flat deformation of $L$ is a smooth application $\Psi :I\times
M\rightarrow M$, where $I$ is an interval in $\mathbb{R}$ containing the
origin, such that $\Psi _{t}=\Psi \left( t,\cdot \right) \in Diff_{0}\left(
M\right) $,\ $L_{t}=\Psi _{t}L$ is a Levi flat hypersurface in $M$ for every 
$t\in I$ \ and $L_{0}=L$. This allowed us to characterize the infinitesimal
deformations of Levi-flat hypersurfaces in a complex manifold.

The purpose of this paper is to study intrinsic deformations of Levi flat
structures on a smooth manifold. Let $L$ be a smooth manifold and $\left(
\xi ,J\right) $ a Levi flat structure on $L$. A deformation of a Levi flat
structure $\left( \xi ,J\right) $ is a smooth family $\left\{ \left( \xi
_{t},J_{t}\right) \right\} _{t\in ]-\varepsilon ,\varepsilon \lbrack }$ of
Levi flat structures on $L$ such that $\left( \xi _{0},J_{0}\right) =\left(
\xi ,J\right) $.

This situation is totally different from the case of deformations of Levi
flat hypersurfaces in complex manifolds, since the complex structure of the
leaves is not fixed.

In the first paragraph we recall the results obtained in \cite{Bartolomeis10}
for the deformations of integrable distribution of codimension 1 and we give
a few proofs which are used in the sequel.

Then we introduce the complex Lie algebras of derivation type with their
Nijenhuis tensor and $\overline{\partial }$-operator. A Levi flat structure $%
\left( \xi ,J\right) $ on a smooth manifold $L$ induces a natural complex
Lie algebra of derivation type on the algebra $\mathcal{H}\left( \xi \right) 
$ of the vector fields on $L$ which are tangent to $\xi $.

We define a $\left( 0,1\right) $-form $H_{J,\gamma ,X}$ associated to a $%
DGLA $ defining couple $\left( \gamma ,X\right) $ whose $1$-cohomology class
in a modified $\overline{\partial }_{J}$-complex $\mathfrak{\overline{%
\mathfrak{\beth }}}_{J,\gamma ,X}:\Lambda ^{0,\ast }\left( \xi \right)
\otimes \xi \rightarrow \Lambda ^{0,\ast }\left( \xi \right) \otimes \xi $
is independent on the choice of $\left( \gamma ,X\right) $. If the
cohomology class of $H_{J,\gamma ,X}$ in this complex vanishes, the Levi
flat structure is called exact. Every real analytic Levi flat hypersurface
in a complex manifold is exact.

To study infinitesimal deformations of a Levi flat structure $\left( \xi
,J\right) $, we define a complex $\mathfrak{Z}^{\ast }\left( L,\xi \right) $
whose cohomology group of order $1$ contains the set of infinitesimal
deformations of $\left( \xi ,J\right) $. If this set reduces to a point and
thus in particular if the first cohomology group vanishes, we say that $%
\left( \xi ,J\right) $ is infinitesimally rigid.

If $\left( \xi ,J\right) $ is exact, then $H^{1}\left( \mathfrak{Z,d}\right)
=H^{1}\left( \mathcal{Z}\left( L\right) ,\delta \right) \times H^{1}\left(
\Lambda _{J}^{0,\ast }\left( \xi \right) \otimes \xi ,\overline{\partial }%
_{J}\right) $.

Some proofs require tedious computations and we tried to make them as easy
as possible to read.

\section{Deformation theory of integrable distribution of codimension 1}

For simplicity, all the objects considered in the sequel will be smooth of
class $C^{\infty }$. For the convenience of the reader we recall in this
paragraph several basic definitions and results from \cite{Bartolomeis10}:

\subsection{DGLA defining couples}

\begin{definition}
A differential graded Lie agebra (DGLA) is a triple $\left( V^{\ast },d,%
\left[ \cdot ,\cdot \right] \right) $ such that:

1) $V^{\ast }=\oplus _{i\in \mathbb{N}}V^{i}$, where $\left( V^{i}\right)
_{i\in \mathbb{N}}$ \ is a family of $\mathbb{C}$-vector spaces and $%
d:V^{\ast }\rightarrow V^{\ast }$is a graded homomorphism such that $d^{2}=0$%
. An element $a\in V^{k}$ is said to be homogeneous of degree $k=\deg a$.

2) $\left[ \cdot ,\cdot \right] :$ $V^{\ast }\times V^{\ast }\rightarrow
V^{\ast }$defines a structure of graded Lie algebra i.e. for homogeneous
elements we have

\begin{equation}
\left[ a,b\right] =-\left( -1\right) ^{\deg a\deg b}\left[ b,a\right]
\label{antisym}
\end{equation}%
and 
\begin{equation}
\left[ a,\left[ b,c\right] \right] =\left[ \left[ a,b\right] ,c\right]
+\left( -1\right) ^{\deg a\deg b}\left[ b,\left[ a,c\right] \right]
\label{Jacobi}
\end{equation}

3) $d$ is compatible with the graded Lie algebra structure i.e. 
\begin{equation}
d\left[ a,b\right] =\left[ da,b\right] +\left( -1\right) ^{\deg a}\left[ a,db%
\right] .  \label{d(.)}
\end{equation}
\end{definition}

\begin{definition}
Let $\left( V^{\ast },d,\left[ \cdot ,\cdot \right] \right) $ be a DGLA and $%
a\in V^{1}$. We say that $a$ verifies the Maurer-Cartan equation in $\left(
V^{\ast },d,\left[ \cdot ,\cdot \right] \right) $ if%
\begin{equation}
da+\frac{1}{2}\left[ a,a\right] =0.  \label{MC}
\end{equation}
\end{definition}

\begin{definition}
Let $L$ be a $C^{\infty }$ manifold and $\xi \subset T\left( L\right) $ an
integrable distribution of codimension $1$. A couple $\left( \gamma
,X\right) $ where $\gamma \in \wedge ^{1}\left( L\right) $ and $X$ is a
vector field on $L$ such that $Ker~\gamma =\xi $ and $\gamma \left( X\right)
=1$ will be called a DGLA defining couple.
\end{definition}

\begin{lemma}
\label{Forms=DGLA}Let $L$ be a $C^{\infty }$ manifold and $X$ a vector field
on $L$. We denote by $\Lambda ^{k}\left( L\right) $ the $k$-forms on $L$ and 
$\Lambda ^{\ast }\left( L\right) =\oplus _{k\in \mathbb{N}}\Lambda
^{k}\left( L\right) $. For $\alpha ,\beta \in \Lambda ^{\ast }\left(
L\right) $, set%
\begin{equation}
\left\{ \alpha ,\beta \right\} =\mathcal{L}_{X}\alpha \wedge \beta -\alpha
\wedge \mathcal{L}_{X}\beta  \label{Lie braket 1}
\end{equation}%
where $\mathcal{L}_{X}$ is the Lie derivative. Then $\left( \Lambda ^{\ast
}\left( L\right) ,d,\left\{ \cdot ,\cdot \right\} \right) $ is a DGLA.
\end{lemma}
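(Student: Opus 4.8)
The plan is to verify directly the three axioms of a DGLA for the triple $\left( \Lambda ^{\ast }\left( L\right) ,d,\left\{ \cdot ,\cdot \right\} \right) $, where the grading is the usual grading by form degree and $d$ is the de Rham differential. Since $d^{2}=0$ is classical, the content lies in checking that $\left\{ \cdot ,\cdot \right\} $ is a graded Lie bracket and that $d$ acts on it as a graded derivation. First I would record the basic calculus identities for the Lie derivative that will be used throughout: $\mathcal{L}_{X}$ is a degree-$0$ derivation of the wedge product, i.e. $\mathcal{L}_{X}\left( \alpha \wedge \beta \right) =\mathcal{L}_{X}\alpha \wedge \beta +\alpha \wedge \mathcal{L}_{X}\beta $, and it commutes with $d$, i.e. $d\mathcal{L}_{X}=\mathcal{L}_{X}d$ (Cartan's magic formula gives $\mathcal{L}_{X}=d\iota _{X}+\iota _{X}d$, from which commutation with $d$ is immediate). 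These two facts are the only analytic input needed.

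For graded antisymmetry \eqref{antisym}: if $\alpha \in \Lambda ^{p}$ and $\beta \in \Lambda ^{q}$, then $\mathcal{L}_{X}\alpha \in \Lambda ^{p}$ and $\mathcal{L}_{X}\beta \in \Lambda ^{q}$, so swapping the two wedge factors in each of the two terms of \eqref{Lie braket 1} produces a sign $\left( -1\right) ^{pq}$, and one checks $\left\{ \beta ,\alpha \right\} =\mathcal{L}_{X}\beta \wedge \alpha -\beta \wedge \mathcal{L}_{X}\alpha =\left( -1\right) ^{pq}\left( \alpha \wedge \mathcal{L}_{X}\beta -\mathcal{L}_{X}\alpha \wedge \beta \right) =-\left( -1\right) ^{pq}\left\{ \alpha ,\beta \right\} $, which is exactly \eqref{antisym} with $\deg \alpha \deg \beta =pq$. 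For compatibility of $d$ with the bracket \eqref{d(.)}: apply $d$ to \eqref{Lie braket 1}, use that $d$ is a graded derivation of $\wedge $ with the standard sign, and use $d\mathcal{L}_{X}=\mathcal{L}_{X}d$; collecting terms and matching signs (noting $\deg \mathcal{L}_{X}\alpha =\deg \alpha $) yields $d\left\{ \alpha ,\beta \right\} =\left\{ d\alpha ,\beta \right\} +\left( -1\right) ^{\deg \alpha }\left\{ \alpha ,d\beta \right\} $. Both of these are short sign bookkeeping exercises.

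The graded Jacobi identity \eqref{Jacobi} is the step I expect to be the main obstacle, since it is the one genuinely multi-term computation. The idea is to expand both sides of \eqref{Jacobi} using the definition \eqref{Lie braket 1}: each of $\left[ a,\left[ b,c\right] \right] $, $\left[ \left[ a,b\right] ,c\right] $, $\left[ b,\left[ a,c\right] \right] $ becomes a sum of four wedge monomials in $a,b,c$ and iterated Lie derivatives $\mathcal{L}_{X}$ and $\mathcal{L}_{X}^{2}$. The key algebraic facts that make everything cancel are: (i) $\mathcal{L}_{X}$ is a derivation of $\wedge $ (so Leibniz expansion of $\mathcal{L}_{X}\left( \alpha \wedge \beta \right) $ is available), and (ii) all operations here are built solely from the single commuting operator $\mathcal{L}_{X}$ and the associative graded-commutative product $\wedge $, so in fact this is an instance of a general principle: given a commutative DGA $\left( A,d,\wedge \right) $ and a derivation $D$ of degree $0$ commuting with $d$, the bracket $\left\{ \alpha ,\beta \right\} =D\alpha \wedge \beta -\alpha \wedge D\beta $ defines a DGLA structure on $A$ with the shifted grading. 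I would carry out the expansion carefully, grouping the monomials containing $\mathcal{L}_{X}^{2}$ (these appear once on the left and must be matched on the right) and the monomials containing a product $\mathcal{L}_{X}(\cdot )\wedge \mathcal{L}_{X}(\cdot )$, and verify the signs cancel term by term; the antisymmetry \eqref{antisym} already proved can be used to reduce the number of independent terms to check. This is the ``tedious computation'' the introduction warns about, but it is entirely mechanical once the Leibniz rule for $\mathcal{L}_{X}$ is invoked.
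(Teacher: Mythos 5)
Your proposal is correct. The paper does not reprove this lemma (it is recalled from \cite{Bartolomeis10}), and the direct verification you outline is the standard route: antisymmetry and the compatibility \eqref{d(.)} are sign bookkeeping using that $\mathcal{L}_{X}$ is a degree-zero derivation of $\wedge $ commuting with $d$, and the graded Jacobi identity \eqref{Jacobi} does close up as you predict, since when one expands $\mathcal{L}_{X}$ of an inner bracket by Leibniz the cross terms of the form $\mathcal{L}_{X}\left( \cdot \right) \wedge \mathcal{L}_{X}\left( \cdot \right) $ cancel, leaving only monomials with one $\mathcal{L}_{X}^{2}$ or two separate $\mathcal{L}_{X}$'s that match term by term with the correct signs. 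One small correction: no shift of grading is involved — the bracket \eqref{Lie braket 1} is homogeneous of degree zero for the ordinary form degree (because $\mathcal{L}_{X}$ preserves degree), and indeed your own antisymmetry check uses the form degrees $p,q$; the parenthetical ``with the shifted grading'' in your general principle should simply be deleted.
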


\begin{lemma}
\label{Frobenius} Let $L$ be a $C^{\infty }$ manifold and $\xi \subset
T\left( L\right) $ a distribution of codimension $1$. Let $\left( \gamma
,X\right) $ be a DGLA defining couple. Then the following are equivalent:

i) $\xi $ is integrable;

ii) There exists $\alpha \in \wedge ^{1}\left( L\right) $ such that $d\gamma
=\alpha \wedge \gamma $;

iii) $d\gamma \wedge \gamma =0$;

iv) $d\gamma =-\iota _{X}d\gamma \wedge \gamma $;

v) $\gamma $ satisfies the Maurer-Cartan equation (\ref{MC}) in $\left(
\Lambda ^{\ast }\left( L\right) ,d,\left\{ \cdot ,\cdot \right\} \right) $,
where $\left\{ \cdot ,\cdot \right\} $ is defined in (\ref{Lie braket 1}).
\end{lemma}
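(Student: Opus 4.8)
The plan is to prove the chain of equivalences (i) $\Rightarrow$ (ii) $\Rightarrow$ (iii) $\Rightarrow$ (iv) $\Rightarrow$ (v) $\Rightarrow$ (i), exploiting that a DGLA defining couple $(\gamma, X)$ splits every form into a part in $\xi^{*}$ and a part along $\gamma$. The key structural fact I would establish first is the decomposition: since $\gamma(X)=1$, every $1$-form $\beta$ on $L$ can be written uniquely as $\beta = \beta' + \beta(X)\gamma$ with $\iota_X\beta' = 0$, and similarly every $2$-form $\omega$ decomposes as $\omega = \omega' + (\iota_X\omega)\wedge\gamma$ with $\iota_X\omega' = 0$. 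Applying this to $d\gamma$ gives $d\gamma = (d\gamma)' + (\iota_X d\gamma)\wedge\gamma$ where $\iota_X(d\gamma)' = 0$.

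For (i) $\Rightarrow$ (ii): by the classical Frobenius theorem, integrability of $\xi = \mathrm{Ker}\,\gamma$ means that $d\gamma$ vanishes on pairs of vector fields tangent to $\xi$; hence $(d\gamma)' = 0$, i.e. $d\gamma = (\iota_X d\gamma)\wedge\gamma$, which is (ii) with $\alpha = \iota_X d\gamma$ (and this is already (iv) up to sign, so I would in fact route (i) $\Rightarrow$ (iv) directly and note (iv) $\Rightarrow$ (ii) is trivial). For (ii) $\Rightarrow$ (iii): if $d\gamma = \alpha\wedge\gamma$ then $d\gamma\wedge\gamma = \alpha\wedge\gamma\wedge\gamma = 0$. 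For (iii) $\Rightarrow$ (iv): from the decomposition $d\gamma = (d\gamma)' + (\iota_X d\gamma)\wedge\gamma$, wedging with $\gamma$ kills the second term and leaves $(d\gamma)'\wedge\gamma = 0$; since $\iota_X(d\gamma)' = 0$ and $\gamma(X) = 1$, contracting $(d\gamma)'\wedge\gamma = 0$ with $X$ yields $(d\gamma)' = 0$, which is exactly (iv). The implication (iv) $\Rightarrow$ (ii) is immediate as noted.

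The heart of the lemma is the equivalence of (v) with the rest. I would compute $\{\gamma,\gamma\} = \mathcal{L}_X\gamma\wedge\gamma - \gamma\wedge\mathcal{L}_X\gamma = 2\,\mathcal{L}_X\gamma\wedge\gamma$ using the sign rule for $1$-forms, and then use Cartan's formula $\mathcal{L}_X\gamma = d\iota_X\gamma + \iota_X d\gamma = d(1) + \iota_X d\gamma = \iota_X d\gamma$ since $\gamma(X) = 1$ is constant. Hence the Maurer-Cartan equation $d\gamma + \tfrac12\{\gamma,\gamma\} = 0$ becomes $d\gamma + (\iota_X d\gamma)\wedge\gamma = 0$, i.e. $d\gamma = -(\iota_X d\gamma)\wedge\gamma$, which is precisely condition (iv). So (iv) $\Leftrightarrow$ (v) is essentially a translation through Cartan's formula, and (v) $\Rightarrow$ (i) follows by closing the loop: (v) $\Rightarrow$ (iv) $\Rightarrow$ (iii), and (iii) $d\gamma\wedge\gamma = 0$ is the Frobenius integrability condition for a codimension-$1$ distribution, giving (i).

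The only place demanding genuine care is the bookkeeping in (iii) $\Rightarrow$ (iv): one must verify that the contraction argument is valid, namely that a $2$-form $\omega$ with $\iota_X\omega = 0$ and $\omega\wedge\gamma = 0$ must vanish. This holds because at each point one may complete $X$ to a basis whose dual coframe starts with $\gamma$; writing $\omega$ in this coframe, $\iota_X\omega = 0$ forbids any term containing the $\gamma$-covector, so $\omega$ lives in $\Lambda^2\xi^{*}$, and then $\omega\wedge\gamma = 0$ forces $\omega = 0$ since wedging with $\gamma$ is injective on $\Lambda^2\xi^{*}$. I expect this pointwise linear-algebra step, together with keeping the signs straight in $\{\gamma,\gamma\} = 2\,\mathcal{L}_X\gamma\wedge\gamma$, to be the main (though modest) obstacle; everything else is formal manipulation with Cartan's magic formula and the Frobenius theorem.
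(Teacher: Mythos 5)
Your route — the splitting of forms adapted to $(\gamma ,X)$, the chain (i)$\Rightarrow$(iv)$\Rightarrow$(ii)$\Rightarrow$(iii)$\Rightarrow$(iv), and the translation (iv)$\Leftrightarrow$(v) via $\left\{ \gamma ,\gamma \right\} =2\,\mathcal{L}_{X}\gamma \wedge \gamma $ and Cartan's formula $\mathcal{L}_{X}\gamma =\iota _{X}d\gamma $ — is exactly the intended mechanism; the paper itself does not reprove this lemma (it is recalled from \cite{Bartolomeis10}), but it uses precisely these identities later (e.g. $\mathcal{L}_{X}\gamma =\iota _{X}d\gamma $ in the proof of Lemma \ref{d/dt(chi)=delta}, and the splitting underlying (\ref{db})).

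The one concrete flaw is the sign in your basic decomposition, and it is not cosmetic. The correct splitting is $\omega =\omega ^{\prime }+\gamma \wedge \iota _{X}\omega $ with $\iota _{X}\omega ^{\prime }=0$ (equivalently $\omega ^{\prime }=\omega -\gamma \wedge \iota _{X}\omega $, which is what (\ref{db}) encodes); with your version $\omega =\omega ^{\prime }+\left( \iota _{X}\omega \right) \wedge \gamma $, contracting with $X$ gives $\iota _{X}\omega =-\iota _{X}\omega $, so it can only hold when $\iota _{X}\omega =0$. Consequently the conclusion in your (i)$\Rightarrow$(ii) step, ``$\left( d\gamma \right) ^{\prime }=0$, hence $d\gamma =\left( \iota _{X}d\gamma \right) \wedge \gamma $,'' is not (iv) ``up to sign'': contracting $d\gamma =+\left( \iota _{X}d\gamma \right) \wedge \gamma $ with $X$ forces $2\,\iota _{X}d\gamma =0$ and hence $d\gamma =0$, which is false for a general integrable $\xi $, whereas (iv) with the minus sign is consistent. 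Once you write $d\gamma =\left( d\gamma \right) ^{\prime }+\gamma \wedge \iota _{X}d\gamma =\left( d\gamma \right) ^{\prime }-\left( \iota _{X}d\gamma \right) \wedge \gamma $, everything you outline goes through verbatim: integrability kills $\left( d\gamma \right) ^{\prime }$ giving (iv); in (iii)$\Rightarrow$(iv) the contraction $\iota _{X}\left( \left( d\gamma \right) ^{\prime }\wedge \gamma \right) =\left( d\gamma \right) ^{\prime }$ does the job (so the coframe/injectivity argument is not even needed); and the Maurer--Cartan equation becomes $d\gamma +\left( \iota _{X}d\gamma \right) \wedge \gamma =0$, i.e. (iv), while (i)$\Leftrightarrow$(iii) is the classical Frobenius theorem. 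So: fix the sign of the decomposition and drop the hedge ``up to sign,'' and the proof is complete.
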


\begin{corollary}
\label{Forms+delta=DGLA}Let $L$ be a $C^{\infty }$ manifold and $\xi \subset
T\left( L\right) $ an integrable distribution of codimension $1$. Let $%
\left( \gamma ,X\right) $ be a DGLA defining couple. Set 
\begin{equation*}
\delta =d_{\gamma }=d+\left\{ \gamma ,\cdot \right\}
\end{equation*}%
where $\left\{ \cdot ,\cdot \right\} $ is defined in (\ref{Lie braket 1}).
Then , $\left( \Lambda ^{\ast }\left( L\right) ,\delta ,\left\{ \cdot ,\cdot
\right\} \right) $ is a DGLA.
\end{corollary}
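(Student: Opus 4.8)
The plan is to verify directly that $\left(\Lambda^{\ast}\left(L\right),\delta,\left\{\cdot,\cdot\right\}\right)$ satisfies the three axioms of a DGLA, exploiting that $\left(\Lambda^{\ast}\left(L\right),d,\left\{\cdot,\cdot\right\}\right)$ is already one by \lemref{Forms=DGLA} and that $\gamma$ satisfies Maurer--Cartan by \lemref{Frobenius}(v). The graded antisymmetry \eqref{antisym} of $\left\{\cdot,\cdot\right\}$ is unchanged since the bracket is not modified, so nothing is to be done there. What must be checked is: (a) $\delta^{2}=0$; (b) the graded Jacobi identity \eqref{Jacobi} for $\left\{\cdot,\cdot\right\}$, which again is inherited verbatim from \lemref{Forms=DGLA} because the bracket is untouched; and (c) the compatibility \eqref{d(.)} of the \emph{new} differential $\delta$ with the bracket.

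First I would treat (a). Writing $\delta=d+\left\{\gamma,\cdot\right\}$ and expanding, $\delta^{2}\alpha=d^{2}\alpha+d\left\{\gamma,\alpha\right\}+\left\{\gamma,d\alpha\right\}+\left\{\gamma,\left\{\gamma,\alpha\right\}\right\}$. The first term vanishes. Using the compatibility \eqref{d(.)} of $d$ with $\left\{\cdot,\cdot\right\}$ (valid by \lemref{Forms=DGLA}) on the term $d\left\{\gamma,\alpha\right\}$, and recalling $\deg\gamma=1$, the middle terms combine into $\left\{d\gamma,\alpha\right\}$. For the last term, the graded Jacobi identity \eqref{Jacobi} applied to $\left\{\gamma,\left\{\gamma,\alpha\right\}\right\}$ together with graded antisymmetry gives $\left\{\gamma,\left\{\gamma,\alpha\right\}\right\}=\tfrac12\left\{\left\{\gamma,\gamma\right\},\alpha\right\}$, the standard computation for a degree-$1$ element. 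Hence $\delta^{2}\alpha=\left\{d\gamma+\tfrac12\left\{\gamma,\gamma\right\},\alpha\right\}$, which vanishes precisely because $\gamma$ solves the Maurer--Cartan equation \eqref{MC} by \lemref{Frobenius}(v).

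Then I would treat (c): that $\delta\left\{\alpha,\beta\right\}=\left\{\delta\alpha,\beta\right\}+\left(-1\right)^{\deg\alpha}\left\{\alpha,\delta\beta\right\}$. Expanding the left side, the $d$-part is handled by \eqref{d(.)} for $d$, and the $\left\{\gamma,\cdot\right\}$-part is $\left\{\gamma,\left\{\alpha,\beta\right\}\right\}$, which by the graded Jacobi identity \eqref{Jacobi} splits as $\left\{\left\{\gamma,\alpha\right\},\beta\right\}+\left(-1\right)^{\deg\alpha}\left\{\alpha,\left\{\gamma,\beta\right\}\right\}$ (again using $\deg\gamma=1$); reassembling the four terms yields exactly the right-hand side. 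Since \eqref{antisym} and \eqref{Jacobi} for $\left\{\cdot,\cdot\right\}$ are literally the same as in \lemref{Forms=DGLA}, the verification is complete. The only genuinely substantive point — and the one I would present most carefully — is (a), where the Maurer--Cartan property of $\gamma$ is exactly what makes $\delta^{2}=0$; the rest is bookkeeping with signs, and I would keep those computations terse, merely indicating where graded antisymmetry and graded Jacobi are invoked and tracking the parity of $\gamma$.
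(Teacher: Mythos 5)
Your proposal is correct and is exactly the standard argument this corollary is set up for: since the paper omits the proof (the result is recalled from an earlier reference), the intended route is precisely yours — the bracket axioms are untouched, compatibility of $\delta$ with $\left\{ \cdot ,\cdot \right\}$ follows from (\ref{d(.)}) and (\ref{Jacobi}) with $\deg \gamma =1$, and $\delta ^{2}=\left\{ d\gamma +\tfrac{1}{2}\left\{ \gamma ,\gamma \right\} ,\cdot \right\} =0$ by Lemma \ref{Frobenius} v). The only cosmetic remark is that the identity $\left\{ \gamma ,\left\{ \gamma ,\alpha \right\} \right\} =\tfrac{1}{2}\left\{ \left\{ \gamma ,\gamma \right\} ,\alpha \right\}$ already follows from (\ref{Jacobi}) alone with $a=b=\gamma $; the appeal to graded antisymmetry is not needed at that step.
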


\begin{corollary}
\label{Z* subalgebra}Under the hypothesis of Corollary \ref{Forms+delta=DGLA}%
, we set 
\begin{equation*}
\mathcal{Z}^{\ast }\left( L\right) =\left\{ \alpha \in \Lambda ^{\ast
}\left( L\right) :\ \iota _{X}\alpha =0\right\} .
\end{equation*}%
Then $\left( \mathcal{Z}^{\ast }\left( L\right) ,\delta ,\left\{ \cdot
,\cdot \right\} \right) $ is a sub-DGLA of $\left( \Lambda ^{\ast }\left(
L\right) ,\delta ,\left\{ \cdot ,\cdot \right\} \right) $.
\end{corollary}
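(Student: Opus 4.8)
The plan is to check the two closure properties that a graded subspace of a DGLA must satisfy in order to be a sub-DGLA: namely that $\mathcal{Z}^{\ast }\left( L\right) $ is stable under $\delta $ and stable under the bracket $\left\{ \cdot ,\cdot \right\} $. Once this is established, the graded antisymmetry, the graded Jacobi identity, and the compatibility of $\delta $ with the bracket are automatically inherited from the ambient DGLA $\left( \Lambda ^{\ast }\left( L\right) ,\delta ,\left\{ \cdot ,\cdot \right\} \right) $ of Corollary \ref{Forms+delta=DGLA}. Moreover, since $\mathcal{Z}^{\ast }\left( L\right) $ is cut out by the single linear equation $\iota _{X}\alpha =0$ and $\iota _{X}$ respects the grading (up to the obvious degree shift), we have $\mathcal{Z}^{\ast }\left( L\right) =\oplus _{k}\left( \mathcal{Z}^{\ast }\left( L\right) \cap \Lambda ^{k}\left( L\right) \right) $, so it is indeed a graded subspace.

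The computations rely on the usual Cartan calculus: Cartan's magic formula $\mathcal{L}_{X}=\iota _{X}d+d\iota _{X}$, the derivation rule $\iota _{X}\left( \alpha \wedge \beta \right) =\iota _{X}\alpha \wedge \beta +\left( -1\right) ^{\deg \alpha }\alpha \wedge \iota _{X}\beta $, the identity $\iota _{X}\iota _{X}=0$, and the commutation relation $\mathcal{L}_{X}\iota _{X}=\iota _{X}\mathcal{L}_{X}$ (which comes from $\left[ \mathcal{L}_{X},\iota _{Y}\right] =\iota _{\left[ X,Y\right] }$ with $Y=X$). I will also use repeatedly that, by the defining property $\gamma \left( X\right) =1$ of a DGLA defining couple, $\iota _{X}\gamma =1$ is the constant function, whence $\mathcal{L}_{X}\iota _{X}\gamma =0$ and therefore $\iota _{X}\mathcal{L}_{X}\gamma =0$.

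For stability under the bracket, let $\alpha ,\beta \in \mathcal{Z}^{\ast }\left( L\right) $ and apply $\iota _{X}$ to $\left\{ \alpha ,\beta \right\} =\mathcal{L}_{X}\alpha \wedge \beta -\alpha \wedge \mathcal{L}_{X}\beta $; expanding by the derivation rule, every resulting term contains a factor $\iota _{X}\alpha =0$, $\iota _{X}\beta =0$, $\iota _{X}\mathcal{L}_{X}\alpha =\mathcal{L}_{X}\iota _{X}\alpha =0$, or $\iota _{X}\mathcal{L}_{X}\beta =\mathcal{L}_{X}\iota _{X}\beta =0$, so $\iota _{X}\left\{ \alpha ,\beta \right\} =0$. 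For stability under $\delta $, write $\delta \alpha =d\alpha +\mathcal{L}_{X}\gamma \wedge \alpha -\gamma \wedge \mathcal{L}_{X}\alpha $ for $\alpha \in \mathcal{Z}^{\ast }\left( L\right) $. Then $\iota _{X}d\alpha =\mathcal{L}_{X}\alpha -d\iota _{X}\alpha =\mathcal{L}_{X}\alpha $; the middle term gives $\iota _{X}\left( \mathcal{L}_{X}\gamma \wedge \alpha \right) =\iota _{X}\mathcal{L}_{X}\gamma \wedge \alpha -\mathcal{L}_{X}\gamma \wedge \iota _{X}\alpha =0$; and the last term gives $\iota _{X}\left( \gamma \wedge \mathcal{L}_{X}\alpha \right) =\left( \iota _{X}\gamma \right) \mathcal{L}_{X}\alpha -\gamma \wedge \iota _{X}\mathcal{L}_{X}\alpha =\mathcal{L}_{X}\alpha -\gamma \wedge \mathcal{L}_{X}\iota _{X}\alpha =\mathcal{L}_{X}\alpha $. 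Adding these, $\iota _{X}\delta \alpha =\mathcal{L}_{X}\alpha -\mathcal{L}_{X}\alpha =0$, so $\delta \alpha \in \mathcal{Z}^{\ast }\left( L\right) $.

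There is no real obstacle here; the statement is essentially a bookkeeping exercise, and the only points requiring attention are the Koszul signs in the derivation rule for $\iota _{X}$ and the remark that $\gamma \left( X\right) $ is constant, which is exactly what forces $\iota _{X}\mathcal{L}_{X}\gamma =0$. Having verified both closures, one concludes that $\left( \mathcal{Z}^{\ast }\left( L\right) ,\delta ,\left\{ \cdot ,\cdot \right\} \right) $, with $\delta $ and $\left\{ \cdot ,\cdot \right\} $ restricted from $\Lambda ^{\ast }\left( L\right) $, is a sub-DGLA of $\left( \Lambda ^{\ast }\left( L\right) ,\delta ,\left\{ \cdot ,\cdot \right\} \right) $.
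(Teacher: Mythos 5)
Your proof is correct: the two closure checks (under $\left\{ \cdot ,\cdot \right\}$ and under $\delta = d + \left\{ \gamma ,\cdot \right\}$) via Cartan calculus, using $\iota _{X}\mathcal{L}_{X}=\mathcal{L}_{X}\iota _{X}$ and $\iota _{X}\gamma =1$, are exactly what is needed, and the Koszul signs are handled properly. The paper states this corollary without proof (it is recalled from an earlier paper), but the remark that follows it manipulates the same identities $\mathcal{L}_{X}=\iota _{X}d+d\iota _{X}$ on $\mathcal{Z}^{\ast }\left( L\right)$, so your argument is essentially the intended one.
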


\begin{remark}
Let $\alpha ,\beta \in \mathcal{Z}^{\ast }\left( L\right) $ and $\left(
\gamma ,X\right) $ a DGLA defining couple. Then%
\begin{equation}
\left\{ \alpha ,\beta \right\} =\left( \iota _{X}d+d\iota _{X}\right) \alpha
\wedge \beta -\alpha \wedge \left( \iota _{X}d+d\iota _{X}\right) \beta
=\iota _{X}d\alpha \wedge \beta -\alpha \wedge \iota _{X}d\beta
\label{alfa beta}
\end{equation}%
and%
\begin{equation}
\left\{ \gamma ,\alpha \right\} =\left( \iota _{X}d+d\iota _{X}\right)
\gamma \wedge \alpha -\gamma \wedge \left( \iota _{X}d+d\iota _{X}\right)
\alpha =\iota _{X}d\gamma \wedge \alpha -\gamma \wedge \iota _{X}d\alpha .
\label{gama alfa}
\end{equation}
\end{remark}

\ \ \ \ \ \ \ \ \ \ \ \ \ \ \ \ \ \ \ \ \ \ \ \ \ \ \ \ \ \ \ \ \ \ \ \ \ \
\ \ \newline

Let $L$ be a $C^{\infty }$ manifold and $\xi \subset T\left( L\right) $ an
integrable distribution of codimension $1$. We fix a $DGLA$ defining couple $%
\left( \gamma ,X\right) $ and we consider the DGLA $\left( \mathcal{Z}^{\ast
}\left( L\right) ,\delta ,\left\{ \cdot ,\cdot \right\} \right) $ previously
defined.

\begin{lemma}
\label{Ker(gama+alfa) integrable}Let $\alpha \in \mathcal{Z}^{1}\left(
L\right) $. The following are equivalent:

i) The distribution $\xi _{\alpha }=\ker $~$\left( \gamma +\alpha \right) $
is integrable.

ii) $\alpha $ satisfies the Maurer-Cartan equation (\ref{MC}) in \ $\left( 
\mathcal{Z}^{\ast }\left( L\right) ,\delta ,\left\{ \cdot ,\cdot \right\}
\right) $.
\end{lemma}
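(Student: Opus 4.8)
The plan is to reduce everything to Lemma~\ref{Frobenius} applied to the $1$-form $\gamma+\alpha$. First I would observe that $\left(\gamma+\alpha,X\right)$ is again a DGLA defining couple, now for the distribution $\xi_{\alpha}$: indeed $\iota_{X}\alpha=0$ gives $\left(\gamma+\alpha\right)\left(X\right)=\gamma\left(X\right)=1$, and by definition $\ker\left(\gamma+\alpha\right)=\xi_{\alpha}$. Hence by the equivalence (i)$\Leftrightarrow$(v) of Lemma~\ref{Frobenius}, the distribution $\xi_{\alpha}$ is integrable if and only if $\gamma+\alpha$ satisfies the Maurer--Cartan equation
\begin{equation*}
d\left(\gamma+\alpha\right)+\tfrac{1}{2}\left\{\gamma+\alpha,\gamma+\alpha\right\}=0
\end{equation*}
in the DGLA $\left(\Lambda^{\ast}\left(L\right),d,\left\{\cdot,\cdot\right\}\right)$ of Lemma~\ref{Forms=DGLA}, where $\left\{\cdot,\cdot\right\}$ is the bracket (\ref{Lie braket 1}).

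The next step is to expand this equation using bilinearity and the graded antisymmetry (\ref{antisym}). Since $\gamma$ and $\alpha$ are both homogeneous of degree $1$, (\ref{antisym}) gives $\left\{\alpha,\gamma\right\}=\left\{\gamma,\alpha\right\}$, so that
\begin{equation*}
\left\{\gamma+\alpha,\gamma+\alpha\right\}=\left\{\gamma,\gamma\right\}+2\left\{\gamma,\alpha\right\}+\left\{\alpha,\alpha\right\},
\end{equation*}
and the Maurer--Cartan equation for $\gamma+\alpha$ becomes
\begin{equation*}
\Bigl(d\gamma+\tfrac{1}{2}\left\{\gamma,\gamma\right\}\Bigr)+\Bigl(d\alpha+\left\{\gamma,\alpha\right\}+\tfrac{1}{2}\left\{\alpha,\alpha\right\}\Bigr)=0.
\end{equation*}
Because $\xi$ itself is integrable, Lemma~\ref{Frobenius}(v) says that the first bracketed term vanishes. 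Using $\delta=d+\left\{\gamma,\cdot\right\}$ from Corollary~\ref{Forms+delta=DGLA}, the remaining identity is exactly
\begin{equation*}
\delta\alpha+\tfrac{1}{2}\left\{\alpha,\alpha\right\}=0,
\end{equation*}
that is, the Maurer--Cartan equation (\ref{MC}) for $\alpha$ in $\left(\mathcal{Z}^{\ast}\left(L\right),\delta,\left\{\cdot,\cdot\right\}\right)$. This chain of equivalences gives (i)$\Leftrightarrow$(ii).

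Finally I would address the minor well-posedness point that all the terms involved stay in $\mathcal{Z}^{\ast}\left(L\right)$: since $\alpha\in\mathcal{Z}^{1}\left(L\right)$ and $\left(\mathcal{Z}^{\ast}\left(L\right),\delta,\left\{\cdot,\cdot\right\}\right)$ is a sub-DGLA of $\left(\Lambda^{\ast}\left(L\right),\delta,\left\{\cdot,\cdot\right\}\right)$ by Corollary~\ref{Z* subalgebra}, both $\delta\alpha$ and $\left\{\alpha,\alpha\right\}$ lie in $\mathcal{Z}^{2}\left(L\right)$, so the Maurer--Cartan equation for $\alpha$ indeed takes place in the subalgebra. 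I do not expect a serious obstacle here: the only thing that really needs care is the sign bookkeeping in the expansion of $\left\{\gamma+\alpha,\gamma+\alpha\right\}$ and the observation that the bracket appearing in Lemma~\ref{Frobenius}(v) is literally the bracket (\ref{Lie braket 1}) used to build $\delta$; once this is checked, the statement is just the standard ``shift of base point'' for solutions of the Maurer--Cartan equation in a DGLA.
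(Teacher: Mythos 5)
Your argument is correct: the paper itself recalls this lemma from \cite{Bartolomeis10} without reproducing a proof, and your reduction is exactly the intended one — apply Lemma~\ref{Frobenius}~(i)$\Leftrightarrow$(v) to the defining couple $\left( \gamma +\alpha ,X\right) $ for $\xi _{\alpha }$ (legitimate since $\iota _{X}\alpha =0$ gives $\left( \gamma +\alpha \right) \left( X\right) =1$ and the bracket (\ref{Lie braket 1}) depends only on $X$), expand $\left\{ \gamma +\alpha ,\gamma +\alpha \right\} $ using the symmetry of the bracket on degree-one elements, and cancel $d\gamma +\frac{1}{2}\left\{ \gamma ,\gamma \right\} =0$ coming from the integrability of $\xi $, leaving $\delta \alpha +\frac{1}{2}\left\{ \alpha ,\alpha \right\} =0$. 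The well-posedness remark that $\delta \alpha $ and $\left\{ \alpha ,\alpha \right\} $ stay in $\mathcal{Z}^{\ast }\left( L\right) $ by Corollary~\ref{Z* subalgebra} completes the equivalence, so no gap remains.
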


\subsection{Group action}

\begin{definition}
\label{Def group action}Let $\mathcal{G=}Diff\left( L\right) $ be the group
of diffeomorphisms of $L$. Let $\mathcal{U}_{0}$ be a neighborhood of the
identity in $\mathcal{G}$ and $\mathcal{V}_{0}$ be a neighborhood of $0$ in $%
\mathcal{Z}^{1}\left( L\right) $ such that $\Phi ^{\ast }\left( \gamma
+\alpha \right) \left( X\right) \neq 0$ for every $\Phi \in \mathcal{U}_{0}$
and every $\alpha \in \mathcal{V}_{0}$. We define 
\begin{equation}
\left( \Phi ,\alpha \right) \in \mathcal{U}_{0}\times \mathcal{V}_{0}\subset 
\mathcal{G}\times \mathcal{Z}^{1}\left( L\right) \rightarrow \mathcal{Z}%
^{1}\left( L\right) \ni \chi \left( \Phi \right) \left( \alpha \right)
=\left( \Phi ^{\ast }\left( \gamma +\alpha \right) \left( \mathfrak{X}%
\right) \right) ^{-1}\Phi ^{\ast }\left( \gamma +\alpha \right) -\gamma .
\label{chi(fi)}
\end{equation}
\end{definition}

\begin{remark}
\label{group action}In this way, $\xi _{\chi \left( \Phi \right) \left(
\alpha \right) }=\Phi ^{\ast }\xi _{\alpha }$. This means that $\xi _{\alpha
}$ is integrable if and only if $\xi _{\chi \left( \Phi \right) \left(
\alpha \right) }$ is integrable. By Lemma \ref{Ker(gama+alfa) integrable} we
deduce that $\alpha $ satisfies the Maurer-Cartan equation (\ref{MC}) in the
DGLA\ $\left( \mathcal{Z}^{\ast }\left( L\right) ,\delta ,\left\{ \cdot
,\cdot \right\} \right) $ if and only if $\chi \left( \Phi \right) \left(
\alpha \right) $ does.
\end{remark}

So we obtain the following

\begin{proposition}
\label{moduli 1 foliations}Set%
\begin{equation*}
\mathfrak{MC}_{\delta }\left( L\right) =\left\{ \alpha \in \mathcal{Z}%
^{1}\left( L\right) :\ \delta a+\frac{1}{2}\left\{ \alpha ,\alpha \right\}
=0\right\} .
\end{equation*}%
Then the moduli space of deformations of integrable distributions of
codimension $1$ is 
\begin{equation*}
\mathfrak{MC}_{\delta }\left( L\right) /\thicksim _{\mathcal{G}}
\end{equation*}%
where $\alpha \thicksim _{\mathcal{G}}\beta $ if there exists $\Phi \in 
\mathcal{G}$ such that $\beta =\chi \left( \Phi \right) \left( \alpha
\right) $.
\end{proposition}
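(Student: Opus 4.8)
The plan is to reduce the statement to Lemma~\ref{Ker(gama+alfa) integrable} and Remark~\ref{group action}, via the dictionary between codimension-$1$ distributions transverse to $X$ and $1$-cochains in $\mathcal{Z}^{\ast }\left( L\right) $. First I would record that, since $\gamma \left( X\right) =1$ and $\iota _{X}\alpha =0$ for every $\alpha \in \mathcal{Z}^{1}\left( L\right) $, one has $\left( \gamma +\alpha \right) \left( X\right) =1\neq 0$; hence $\gamma +\alpha $ is a nowhere-vanishing $1$-form and $\xi _{\alpha }=\ker \left( \gamma +\alpha \right) $ is a genuine codimension-$1$ distribution, everywhere transverse to $X$. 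Conversely a codimension-$1$ distribution $\xi ^{\prime }$ transverse to $X$ is the kernel of a nowhere-vanishing $1$-form, unique up to a nowhere-vanishing scalar factor, so normalizing its value on $X$ to be $1$ and subtracting $\gamma $ yields a unique $\alpha $ with $\iota _{X}\alpha =1-1=0$, i.e. $\alpha \in \mathcal{Z}^{1}\left( L\right) $ and $\xi ^{\prime }=\xi _{\alpha }$. Thus $\alpha \mapsto \xi _{\alpha }$ is a bijection from $\mathcal{Z}^{1}\left( L\right) $ onto the set of codimension-$1$ distributions transverse to $X$.

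Next I would translate the notion of a deformation into this language. Because $\gamma \left( X\right) =1\neq 0$, transversality to $X$ is an open condition, so a smooth family $\left\{ \xi _{t}\right\} $ with $\xi _{0}=\xi $ consists, for $t$ small, of distributions transverse to $X$; hence $\xi _{t}=\xi _{\alpha _{t}}$ for a unique smooth path $t\mapsto \alpha _{t}\in \mathcal{Z}^{1}\left( L\right) $ with $\alpha _{0}=0$. By Lemma~\ref{Ker(gama+alfa) integrable}, each $\xi _{\alpha _{t}}$ is integrable if and only if $\alpha _{t}$ solves the Maurer--Cartan equation in $\left( \mathcal{Z}^{\ast }\left( L\right) ,\delta ,\left\{ \cdot ,\cdot \right\} \right) $, i.e. $\alpha _{t}\in \mathfrak{MC}_{\delta }\left( L\right) $. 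So deformations of $\xi $ are exactly paths through $0$ in $\mathfrak{MC}_{\delta }\left( L\right) $, and, globally, the integrable codimension-$1$ distributions transverse to $X$ are parametrized by $\mathfrak{MC}_{\delta }\left( L\right) $.

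Finally I would pass to the diffeomorphism quotient. Two integrable distributions $\xi _{\alpha },\xi _{\beta }$ are to be identified precisely when there is $\Phi \in \mathcal{G}$ with $\Phi ^{\ast }\xi _{\beta }=\xi _{\alpha }$. The plan here is to verify that $\chi $ of Definition~\ref{Def group action} is, on the neighborhoods $\mathcal{U}_{0}\times \mathcal{V}_{0}$, an honest left action of $\mathcal{G}$ on $\mathcal{Z}^{1}\left( L\right) $: the one point needing a computation is the cocycle identity $\chi \left( \Psi \circ \Phi \right) =\chi \left( \Phi \right) \circ \chi \left( \Psi \right) $, which follows from $\left( \Psi \circ \Phi \right) ^{\ast }=\Phi ^{\ast }\Psi ^{\ast }$ together with the way the normalization factor $\left( \Phi ^{\ast }\left( \gamma +\alpha \right) \left( X\right) \right) ^{-1}$ transforms; note also $\iota _{X}\chi \left( \Phi \right) \left( \alpha \right) =1-1=0$, so the action does land in $\mathcal{Z}^{1}\left( L\right) $. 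Then Remark~\ref{group action} gives $\xi _{\chi \left( \Phi \right) \left( \alpha \right) }=\Phi ^{\ast }\xi _{\alpha }$, so $\Phi ^{\ast }\xi _{\beta }=\xi _{\alpha }$ is equivalent to $\alpha =\chi \left( \Phi \right) \left( \beta \right) $, i.e. $\alpha \thicksim _{\mathcal{G}}\beta $; and since $\Phi ^{\ast }$ preserves integrability, the parametrization of the previous paragraph descends to a bijection between diffeomorphism classes of integrable codimension-$1$ distributions and $\mathfrak{MC}_{\delta }\left( L\right) /\thicksim _{\mathcal{G}}$, which is the asserted moduli space.

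I expect the main obstacle to be the bookkeeping in this last step: showing that $\chi $ is a group action rather than merely a family of maps, and controlling the open sets $\mathcal{U}_{0},\mathcal{V}_{0}$ so that the diffeomorphisms being composed stay in the range where $\left( \Phi ^{\ast }\left( \gamma +\alpha \right) \right) \left( X\right) $ is invertible. The first two paragraphs are essentially formal once one notices that $\iota _{X}\left( \gamma +\alpha \right) =1$ simultaneously forces $\gamma +\alpha $ to be nonvanishing and singles out the representative $1$-form, the only substantive input there being Lemma~\ref{Ker(gama+alfa) integrable}.
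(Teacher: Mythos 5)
Your proposal is correct and follows essentially the same route as the paper, which states this proposition as an immediate consequence of Lemma~\ref{Ker(gama+alfa) integrable} (integrability of $\xi_{\alpha}$ $\Leftrightarrow$ Maurer--Cartan for $\alpha$) and Remark~\ref{group action} ($\xi_{\chi\left( \Phi \right)\left( \alpha \right)}=\Phi^{\ast}\xi_{\alpha}$, so gauge equivalence matches diffeomorphism equivalence); you merely make explicit the bijection $\alpha\mapsto\xi_{\alpha}$ onto distributions transverse to $X$. The only cosmetic point is that the relation $\left( \Psi\circ\Phi\right)^{\ast}=\Phi^{\ast}\Psi^{\ast}$ makes $\chi$ a right action (your displayed cocycle identity is the anti-homomorphism property), which does not affect the equivalence relation or the conclusion.
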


\begin{lemma}
\label{d/dt(chi)=delta} Let $Y$ be a vector field on $L$ and $\Phi ^{Y}$ the
flow of $Y$. Then%
\begin{equation*}
\frac{d\chi \left( \Phi _{t}^{Y}\right) }{dt}_{\left\vert t=0\right. }\left(
0\right) =-\delta \left( \iota _{Y}\gamma \right) .
\end{equation*}
\end{lemma}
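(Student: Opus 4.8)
The plan is to compute the $t$-derivative of the expression $\chi(\Phi_t^Y)(0)$ given explicitly by \eqref{chi(fi)}, and then recognize the result as $-\delta(\iota_Y\gamma)$ using the formula for $\delta$ from Corollary~\ref{Forms+delta=DGLA}. Setting $\alpha=0$ in \eqref{chi(fi)}, we have
\[
\chi(\Phi_t^Y)(0)=\bigl((\Phi_t^Y)^\ast\gamma(X)\bigr)^{-1}(\Phi_t^Y)^\ast\gamma-\gamma,
\]
so the first step is to differentiate each factor at $t=0$. The basic tool is that for any differential form $\omega$, $\frac{d}{dt}\big|_{t=0}(\Phi_t^Y)^\ast\omega=\mathcal{L}_Y\omega$, together with Cartan's magic formula $\mathcal{L}_Y=d\iota_Y+\iota_Y d$. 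Since $(\Phi_0^Y)^\ast\gamma=\gamma$ and $\gamma(X)=1$, the scalar function $f(t)=(\Phi_t^Y)^\ast\gamma(X)$ satisfies $f(0)=1$ and $f'(0)=(\mathcal{L}_Y\gamma)(X)$, so by the quotient/product rule
\[
\frac{d}{dt}\Big|_{t=0}\chi(\Phi_t^Y)(0)=\mathcal{L}_Y\gamma-\bigl((\mathcal{L}_Y\gamma)(X)\bigr)\gamma
=\mathcal{L}_Y\gamma-\iota_X(\mathcal{L}_Y\gamma)\,\gamma,
\]
using $(\mathcal{L}_Y\gamma)(X)=\iota_X(\mathcal{L}_Y\gamma)$.

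The second step is to identify this with $-\delta(\iota_Y\gamma)$. Write $g=\iota_Y\gamma$, a smooth function (a $0$-form). By definition $\delta g=dg+\{\gamma,g\}$. Since $g$ is a $0$-form, $\{\gamma,g\}=\mathcal{L}_X\gamma\cdot g-\gamma\wedge\mathcal{L}_X g$ from \eqref{Lie braket 1}; one must be careful about whether $g$ lies in $\mathcal{Z}^\ast(L)$, i.e.\ whether $\iota_X g=0$ — for a $0$-form this is automatic, so $g\in\mathcal{Z}^0(L)$, and from \eqref{gama alfa}-type reasoning $\{\gamma,g\}=\iota_X d\gamma\cdot g-\gamma\cdot\iota_X dg$. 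Hence $\delta g=dg+g\,\iota_X d\gamma-\gamma\,\iota_X dg=dg-\gamma\,\iota_X dg+g\,\iota_X d\gamma$. On the other side, expanding $\mathcal{L}_Y\gamma=d\iota_Y\gamma+\iota_Y d\gamma=dg+\iota_Y d\gamma$, and using integrability of $\xi$ in the form (iv) of Lemma~\ref{Frobenius}, namely $d\gamma=-\iota_X d\gamma\wedge\gamma$, one rewrites $\iota_Y d\gamma=-(\iota_X d\gamma)(\iota_Y\gamma)+\cdots$ — here the key contraction identity $\iota_Y(\omega\wedge\gamma)=(\iota_Y\omega)\wedge\gamma\pm\omega\,(\iota_Y\gamma)$ must be applied carefully to $\iota_Y(-\iota_X d\gamma\wedge\gamma)$. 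After collecting terms, the claim reduces to an algebraic identity that should match $-\delta g$ term by term.

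The main obstacle is the bookkeeping in the second step: keeping track of signs in the graded Leibniz rule for $\iota_Y$ acting on wedge products, and correctly invoking the right equivalent form of the Frobenius integrability condition (Lemma~\ref{Frobenius}) to eliminate $\iota_Y d\gamma$ in favor of expressions built from $\iota_X d\gamma$, $\gamma$, and $g=\iota_Y\gamma$. A secondary subtlety is justifying the interchange of differentiation and pullback and the application of the quotient rule to the $\mathcal{Z}^1(L)$-valued map $t\mapsto\chi(\Phi_t^Y)(0)$, which is routine but should be stated. Once the Frobenius substitution is made correctly, both sides of the asserted equation become the same explicit $1$-form, completing the proof.
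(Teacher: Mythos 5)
Your plan follows the same route as the paper's own proof (differentiate the explicit formula for $\chi$, use Cartan's formula, eliminate $\iota_Yd\gamma$ via Lemma \ref{Frobenius} iv), and identify the result with $\delta(\iota_Y\gamma)$ through \eqref{B}), but there is a sign problem that your proposal does not resolve, and as written it proves the identity with the wrong sign. Taking \eqref{chi(fi)} literally, you differentiate $t\mapsto\bigl((\Phi_t^Y)^\ast\gamma(X)\bigr)^{-1}(\Phi_t^Y)^\ast\gamma-\gamma$ and obtain $\mathcal{L}_Y\gamma-(\mathcal{L}_Y\gamma)(X)\,\gamma$. If you now carry out your second step correctly (using $\iota_Yd\gamma=-d\gamma(X,Y)\gamma+(\iota_Y\gamma)\,\iota_Xd\gamma$ from Lemma \ref{Frobenius} iv) and the formula $\delta(\iota_Y\gamma)=d\iota_Y\gamma+(\iota_Y\gamma)\iota_Xd\gamma-X(\iota_Y\gamma)\gamma$ of \eqref{B}), the two $d\gamma(X,Y)$-terms cancel and you are left with exactly $+\delta(\iota_Y\gamma)$, not $-\delta(\iota_Y\gamma)$. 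The paper's proof instead differentiates $\bigl(((\Phi_t^Y)^{-1})^\ast\gamma(X)\bigr)^{-1}((\Phi_t^Y)^{-1})^\ast\gamma-\gamma$; since $\frac{d}{dt}\big|_{t=0}\,((\Phi_t^Y)^{-1})^\ast\gamma=-\mathcal{L}_Y\gamma$, it arrives at $(\mathcal{L}_Y\gamma)(X)\gamma-\mathcal{L}_Y\gamma$, the negative of your expression, and this is what equals $-\delta(\iota_Y\gamma)$. In other words, the action actually used in the paper (and needed for the minus sign in the statement, and later in Proposition \ref{Tangent Inclus Cohomologie} and Lemma \ref{dS chi Fi}) is through $(\Phi^{-1})^\ast$, i.e.\ through $(\Phi_{-t}^Y)^\ast$, even though \eqref{chi(fi)} is written with $\Phi^\ast$. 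Your closing claim that ``both sides of the asserted equation become the same explicit $1$-form'' is therefore false under your convention; to prove the lemma as stated you must either differentiate the pullback by $(\Phi_t^Y)^{-1}$, or replace $Y$ by $-Y$, and your proposal does neither nor flags the discrepancy.

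Apart from this, the remaining ingredients you cite are exactly those of the paper's argument: $\mathcal{L}_X\gamma=\iota_Xd\gamma$ (since $\gamma(X)=1$), the graded contraction identity applied to $\iota_X d\gamma\wedge\gamma$, and \eqref{B} for $\delta$ on the function $\iota_Y\gamma$; the bookkeeping you leave as ``should match term by term'' does indeed close, but only up to the overall sign discussed above. The justification of exchanging $\frac{d}{dt}$ with the pullback and applying the quotient rule to the scalar factor is routine, as you say, and matches the paper.
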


\begin{proof}
We have 
\begin{eqnarray*}
\frac{d\chi \left( \Phi _{t}^{Y}\right) }{dt}_{\left\vert t=0\right. }\left(
0\right) &=&\frac{d\left( \left( \left( \left( \Phi _{t}^{Y}\right)
^{-1}\right) ^{\ast }\left( \gamma \right) \left( X\right) \right)
^{-1}\left( \left( \Phi _{t}^{Y}\right) ^{-1}\right) ^{\ast }\left( \gamma
\right) -\gamma \right) }{dt}_{\left\vert t=0\right. } \\
&=&\left( \left( \Phi _{t}^{Y}\right) ^{-1}\right) ^{\ast }\left( \gamma
\right) \frac{d\left( \left( \left( \Phi _{t}^{Y}\right) ^{-1}\right) ^{\ast
}\left( \gamma \right) \left( X\right) ^{-1}\right) }{dt}_{\left\vert
t=0\right. }\left( 0\right) \\
&&+\left( \left( \Phi _{t}^{Y}\right) ^{-1}\right) ^{\ast }\left( \gamma
\right) \left( X\right) ^{-1}\frac{d\left( \left( \Phi _{t}^{Y}\right)
^{-1}\right) ^{\ast }}{dt}_{\left\vert t=0\right. } \\
&=&\frac{d\left( \left( \left( \left( \Phi _{t}^{Y}\right) ^{-1}\right)
^{\ast }\left( \gamma \right) \left( X\right) \right) ^{-1}\right) }{dt}%
_{\left\vert t=0\right. }\gamma +\frac{d\left( \left( \left( \Phi
_{t}^{Y}\right) ^{-1}\right) ^{\ast }\left( \gamma \right) \right) }{dt}%
_{\left\vert t=0\right. } \\
&=&\mathcal{L}_{Y}\left( \gamma \right) \left( X\right) \gamma -\mathcal{L}%
_{Y}\gamma \\
&=&\left( d\iota _{Y}\gamma \right) \left( X\right) \gamma +\iota
_{Y}d\gamma \left( X\right) \gamma -d\iota _{Y}\gamma -\iota _{Y}d\gamma .
\end{eqnarray*}%
By Lemma \ref{Frobenius} iv)%
\begin{eqnarray*}
\iota _{Y}d\gamma &=&-\iota _{Y}\left( \iota _{X}d\gamma \wedge \gamma
\right) =-\left( \iota _{Y}\left( \iota _{X}d\gamma \right) \right) \gamma
+\left( \iota _{Y}\gamma \right) \iota _{X}d\gamma \\
&=&-\left( d\gamma \left( X,Y\right) \right) \gamma +\left( \iota _{Y}\gamma
\right) \iota _{X}d\gamma ,
\end{eqnarray*}%
so%
\begin{eqnarray}
\frac{d\chi \left( \Phi _{t}^{Y}\right) }{dt}_{\left\vert t=0\right. }\left(
0\right) &=&\left( d\iota _{Y}\gamma \right) \left( X\right) \gamma -d\gamma
\left( Y,X\right) \gamma -d\iota _{Y}\gamma  \notag \\
&&+\left( d\gamma \left( X,Y\right) \right) \gamma -\left( \iota _{Y}\gamma
\right) \iota _{X}d\gamma  \notag \\
&=&\left( \iota _{X}d\iota _{Y}\gamma \right) \gamma -d\iota _{Y}\gamma
-\left( \iota _{Y}\gamma \right) \iota _{X}d\gamma .  \label{A}
\end{eqnarray}%
Since 
\begin{equation*}
\mathcal{L}_{X}\gamma =d\iota _{X}\gamma +\iota _{X}d\gamma =\iota
_{X}d\gamma
\end{equation*}%
it follows that%
\begin{eqnarray}
\delta \iota _{Y}\gamma &=&d\iota _{Y}\gamma +\left\{ \gamma ,\iota
_{Y}\gamma \right\} =d\iota _{Y}\gamma +\mathcal{L}_{X}\gamma \wedge \iota
_{Y}\gamma -\gamma \wedge \mathcal{L}_{X}\iota _{Y}\gamma  \label{B} \\
&=&d\iota _{Y}\gamma +\left( \iota _{Y}\gamma \right) \iota _{X}d\gamma
-X\left( \iota _{Y}\gamma \right) \gamma .  \notag
\end{eqnarray}%
From (\ref{A}) and (\ref{B}) we obtain 
\begin{equation*}
\frac{d\chi \left( \Phi _{t}^{Y}\right) }{dt}_{\left\vert t=0\right. }\left(
0\right) =-\delta \iota _{Y}\gamma .
\end{equation*}
\end{proof}

\subsection{Infinitesimal deformations}

\begin{definition}
A $\ \mathfrak{MC}_{\delta }\left( L\right) $-valued curve through the
origin is a smooth mapping $\lambda :\left[ -a,a\right] \rightarrow 
\mathfrak{MC}_{\delta }\left( L\right) $, $a>0$, such that $\lambda \left(
0\right) =0$. We say that $\alpha $ is the tangent vector at the origin of
the $\mathfrak{MC}_{\delta }\left( L\right) $-valued curve $\lambda $
through the origin to $\mathfrak{MC}_{\delta }\left( L\right) $ if $\alpha =%
\underset{t\rightarrow 0}{\lim }\frac{\lambda \left( t\right) }{t}=\frac{%
d\lambda }{dt}_{\left\vert t=0\right. }$.
\end{definition}

\begin{proposition}
\label{Tangent Inclus Cohomologie}Let $\alpha $ be the tangent vector at the
origin of a $\mathfrak{MC}_{\delta }\left( L\right) $-valued curve through
the origin $\lambda $, $Y$ a vector field on $L$ and $\Phi ^{Y}$ the flow of 
$Y$. Set $\mu \left( t\right) =\chi \left( \Phi _{t}^{Y}\right) \left(
\lambda \left( t\right) \right) $., Then:

i) $\delta \alpha =0$.

ii)The tangent vector $\beta $ at the origin of \ the $\mathfrak{MC}_{\delta
}\left( L\right) $-valued curve $\mu $ is%
\begin{equation*}
\beta =\alpha -\delta \iota _{Y}\gamma
\end{equation*}
\end{proposition}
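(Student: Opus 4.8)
The plan is to prove both items by differentiating the defining identities of $\mathfrak{MC}_{\delta}(L)$ and of the group action $\chi$ at $t=0$.

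For part i), I would start from the fact that $\lambda(t)\in\mathfrak{MC}_{\delta}(L)$ for all $t$ in $[-a,a]$, i.e.
\begin{equation*}
\delta\lambda(t)+\tfrac12\{\lambda(t),\lambda(t)\}=0 .
\end{equation*}
Since $\lambda$ is smooth and $\lambda(0)=0$, I differentiate this identity in $t$ and evaluate at $t=0$. The term $\delta\lambda(t)$ contributes $\delta\alpha$ because $\alpha=\frac{d\lambda}{dt}_{|t=0}$ and $\delta$ is linear and $t$-independent. The bracket term $\{\lambda(t),\lambda(t)\}$ is bilinear, so its derivative at $t=0$ is $\{\alpha,\lambda(0)\}+\{\lambda(0),\alpha\}=\{\alpha,0\}+\{0,\alpha\}=0$. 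Hence $\delta\alpha=0$, which is (i). (This is the soft part; the only thing to check is that differentiation of a smooth curve commutes with the fixed continuous linear operators $\delta$ and with the continuous bilinear bracket, which is standard.)

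For part ii), I would use Proposition~\ref{moduli 1 foliations} / Remark~\ref{group action} to note that $\mu(t)=\chi(\Phi_t^Y)(\lambda(t))$ is again an $\mathfrak{MC}_{\delta}(L)$-valued curve through the origin (since $\chi(\Phi_0^Y)(\lambda(0))=\chi(\mathrm{Id})(0)=0$), so its tangent vector $\beta=\frac{d\mu}{dt}_{|t=0}$ exists. The map $(\Phi,\alpha)\mapsto\chi(\Phi)(\alpha)$ is smooth in both arguments, so by the chain rule
\begin{equation*}
\beta=\frac{d}{dt}\Big|_{t=0}\chi(\Phi_t^Y)(\lambda(t))
     =\frac{\partial}{\partial t}\Big|_{t=0}\chi(\Phi_t^Y)(0)
     +\frac{\partial}{\partial\alpha}\Big|_{\alpha=0}\chi(\mathrm{Id})(\alpha)\cdot\alpha .
\end{equation*}
The first summand is exactly $\frac{d\chi(\Phi_t^Y)}{dt}_{|t=0}(0)$, which by Lemma~\ref{d/dt(chi)=delta} equals $-\delta(\iota_Y\gamma)$. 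For the second summand I must compute the differential of $\alpha\mapsto\chi(\mathrm{Id})(\alpha)$ at $\alpha=0$; but $\chi(\mathrm{Id})(\alpha)=(\mathrm{Id}^{\ast}(\gamma+\alpha)(X))^{-1}\mathrm{Id}^{\ast}(\gamma+\alpha)-\gamma=(\gamma(X)+\alpha(X))^{-1}(\gamma+\alpha)-\gamma=(1+\iota_X\alpha)^{-1}(\gamma+\alpha)-\gamma$, using $\gamma(X)=1$. Since $\alpha\in\mathcal{Z}^1(L)$ we have $\iota_X\alpha=0$, so $\chi(\mathrm{Id})(\alpha)=\gamma+\alpha-\gamma=\alpha$; that is, $\chi(\mathrm{Id})$ is the identity on $\mathcal{Z}^1(L)$, hence its differential at $0$ is the identity, and the second summand is just $\alpha$. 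Combining, $\beta=\alpha-\delta(\iota_Y\gamma)$, as claimed.

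The main obstacle, such as it is, is purely bookkeeping: making sure the chain rule is applied with the correct separation of variables and that the restriction $\iota_X\alpha=0$ (valid precisely because we work in the sub-DGLA $\mathcal{Z}^{\ast}(L)$ of Corollary~\ref{Z* subalgebra}) is what collapses $\chi(\mathrm{Id})$ to the identity, so that the $\alpha$-derivative of $\chi$ at the origin is trivial and all the nontrivial content of (ii) comes from Lemma~\ref{d/dt(chi)=delta}. Everything else — smoothness of $\chi$, existence of the tangent vectors, interchange of $\frac{d}{dt}$ with $\delta$ and with the bracket — is routine and I would not belabor it.
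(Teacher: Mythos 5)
Your proposal is correct and follows essentially the same route as the paper: part i) by differentiating the Maurer--Cartan identity at $t=0$ (the bracket term vanishing to first order since $\lambda(0)=0$), and part ii) by the chain-rule split $\beta=\frac{d\chi\left( \Phi _{t}^{Y}\right)}{dt}_{\left\vert t=0\right.}\left(0\right)+\alpha$ together with Lemma~\ref{d/dt(chi)=delta}. Your extra verification that $\chi\left(\mathrm{Id}\right)$ is the identity on $\mathcal{Z}^{1}\left( L\right)$ (using $\iota _{X}\alpha =0$) just makes explicit a step the paper leaves implicit.
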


\begin{proof}
i) By Lemma \ref{Ker(gama+alfa) integrable} $\lambda \left( t\right) $
verifies the Maurer Cartan equation for every $t$. Since $\lambda \left(
t\right) =\alpha t+o\left( t\right) $, we have $\delta \alpha =0$.

ii) 
\begin{equation}
\beta =\frac{d\mu }{dt}_{\left\vert t=0\right. }=\frac{d}{dt}\chi \left(
\Phi _{t}^{Y}\left( \lambda \left( t\right) \right) \right) _{\left\vert
t=0\right. }=\frac{d\chi \left( \Phi _{t}^{Y}\right) }{dt}_{\left\vert
t=0\right. }\left( 0\right) +\alpha .  \label{beta-beta' debut}
\end{equation}%
The Proposition \ref{Tangent Inclus Cohomologie} follows now by Lemma \ref%
{d/dt(chi)=delta}.
\end{proof}

The Proposition \ref{Tangent Inclus Cohomologie} justifies the following
definition:

\begin{definition}
The infinitesimal deformations of $\xi $ is the collection of cohomology
classes in $H^{1}\left( \mathcal{Z}\left( L\right) ,\delta \right) $ of the
tangent vectors at $0$ to $\mathfrak{MC}_{\delta }\left( L\right) $-valued
curves. We denote by $T_{\left[ 0\right] }\left( \mathfrak{MC}_{\delta
}\left( L\right) /\thicksim _{\mathcal{G}}\right) $ the set of infinitesimal
deformations of $\xi $.
\end{definition}

\subsection{$d_{b}$ operator}

\begin{remark}
\label{iso Z(L) Lambda(L)}We denote $\Lambda ^{\ast }\left( \xi \right)
=\bigoplus_{p\in \mathbb{N}}\Lambda ^{p}\xi ^{\ast }$. There exists a
natural isomorphism $\Theta :\Lambda ^{\ast }\left( \xi \right) \rightarrow 
\mathcal{Z}^{\ast }\left( L\right) $: for $\alpha \in \Lambda ^{1}\xi ^{\ast
}$ set $\Theta \left( \alpha \right) \left( X\right) =0$, $\Theta \left(
\alpha \right) \left( Y\right) =\alpha \left( Y\right) $ if $Y\in \xi $ and
extend by linearity. Let $d_{b}:\Lambda ^{\ast }\left( \xi \right)
\rightarrow \Lambda ^{\ast }\left( \xi \right) $ be the differential along
the leaves of $\xi $. By using this isomorphism we consider $d_{b}:\mathcal{Z%
}^{\ast }\left( L\right) \rightarrow \mathcal{Z}^{\ast }\left( L\right) $
and for every $\alpha \in \mathcal{Z}^{\ast }\left( L\right) $ we have%
\begin{equation}
d_{b}\alpha =\iota _{X}\left( \gamma \wedge d\alpha \right) =d\alpha -\gamma
\wedge \iota _{X}d\alpha .  \label{db}
\end{equation}%
Indeed let $\alpha \in \Lambda ^{p}\xi ^{\ast }$ and $X_{1},\cdot \cdot
\cdot ,X_{p+1}\in \xi $. Since $\gamma \left( X_{j}\right) =0$, $j=1,\cdot
\cdot \cdot ,p+1$ and $\gamma \left( X\right) =1$, we have 
\begin{equation*}
\iota _{X}\left( \gamma \wedge d\alpha \right) \left( X_{1},\cdot \cdot
\cdot ,X_{p+1}\right) =\left( \gamma \wedge d\alpha \right) \left(
X,X_{1},\cdot \cdot \cdot ,X_{p+1}\right) =d\alpha \left( X_{1},\cdot \cdot
\cdot ,X_{p+1}\right) .
\end{equation*}
\end{remark}

\begin{lemma}
\label{i(X) d(gama ) db closed}The form $\iota _{X}d\gamma $ is $d_{b}$%
-closed.
\end{lemma}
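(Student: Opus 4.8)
The plan is to work entirely inside the DGLA $\left(\mathcal{Z}^{\ast}\left(L\right),\delta,\left\{\cdot,\cdot\right\}\right)$ and exploit the fact that $\gamma$ satisfies the Maurer--Cartan equation, which by Lemma~\ref{Frobenius}~v) is equivalent to the integrability of $\xi$. First I would recall from Lemma~\ref{Frobenius}~iv) that $d\gamma=-\iota_{X}d\gamma\wedge\gamma$, and observe that $\iota_{X}d\gamma$ lies in $\mathcal{Z}^{1}\left(L\right)$: indeed $\iota_{X}\left(\iota_{X}d\gamma\right)=\frac{1}{2}\iota_{\left[X,X\right]}d\gamma-\iota_{X}\iota_{X}d\gamma$... more simply $\iota_{X}\iota_{X}=0$, so $\iota_{X}\left(\iota_{X}d\gamma\right)=0$ and the form indeed belongs to $\mathcal{Z}^{\ast}\left(L\right)$. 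Hence it makes sense to ask whether it is $d_{b}$-closed, with $d_{b}$ transported to $\mathcal{Z}^{\ast}\left(L\right)$ via the isomorphism $\Theta$ of Remark~\ref{iso Z(L) Lambda(L)}, so that $d_{b}\alpha=d\alpha-\gamma\wedge\iota_{X}d\alpha$ by formula~(\ref{db}).

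Next I would apply this formula with $\alpha=\iota_{X}d\gamma$. This gives
\begin{equation*}
d_{b}\left(\iota_{X}d\gamma\right)=d\left(\iota_{X}d\gamma\right)-\gamma\wedge\iota_{X}d\left(\iota_{X}d\gamma\right).
\end{equation*}
For the first term, use $d\iota_{X}d\gamma=\mathcal{L}_{X}d\gamma-\iota_{X}dd\gamma=\mathcal{L}_{X}d\gamma=d\mathcal{L}_{X}\gamma=d\iota_{X}d\gamma$ (consistent, as it must be, since $d\mathcal{L}_{X}\gamma = d(d\iota_X\gamma + \iota_X d\gamma)=d\iota_X d\gamma$); the substantive input is to differentiate the Frobenius identity $d\gamma=-\iota_{X}d\gamma\wedge\gamma$, obtaining
\begin{equation*}
0=dd\gamma=-d\left(\iota_{X}d\gamma\right)\wedge\gamma+\iota_{X}d\gamma\wedge d\gamma,
\end{equation*}
so $d\left(\iota_{X}d\gamma\right)\wedge\gamma=\iota_{X}d\gamma\wedge d\gamma=-\iota_{X}d\gamma\wedge\iota_{X}d\gamma\wedge\gamma$, which forces $\left(d\left(\iota_{X}d\gamma\right)+\iota_{X}d\gamma\wedge\iota_{X}d\gamma\right)\wedge\gamma=0$. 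Wedging with $\gamma$ and using that $\iota_{X}d\gamma\wedge\iota_{X}d\gamma=0$ when $\iota_{X}d\gamma$ is a $1$-form, one concludes $d\left(\iota_{X}d\gamma\right)\wedge\gamma=0$, i.e. $d\left(\iota_{X}d\gamma\right)$ is a multiple of $\gamma$; precisely $d\left(\iota_{X}d\gamma\right)=\gamma\wedge\iota_{X}d\left(\iota_{X}d\gamma\right)$. Plugging this back into the $d_{b}$ formula, the two terms cancel and $d_{b}\left(\iota_{X}d\gamma\right)=0$.

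I expect the main obstacle to be bookkeeping with the sign conventions and with the precise meaning of $d_{b}$ after transport by $\Theta$: the identity~(\ref{db}) must be applied to a $1$-form, so one should double-check that $\iota_{X}d\gamma$, which a priori has $\iota_{X}$ of it equal to $0$, is correctly regarded as an element of $\mathcal{Z}^{1}\left(L\right)$, and that the decomposition $d\beta=d_{b}\beta+\gamma\wedge\iota_{X}d\beta$ is valid there. An alternative, perhaps cleaner, route avoids $d_{b}$ almost entirely: show directly that $\iota_{X}\left(d\left(\iota_{X}d\gamma\right)\right)$ is what it must be and that $d\left(\iota_{X}d\gamma\right)$ has no component transverse to the leaves, by pairing with vector fields $X_{1},\dots,X_{3}\in\xi$ and using the Cartan formula together with $\gamma\left(X_{j}\right)=0$ and the integrability condition $d\gamma\left(X_{i},X_{j}\right)=0$ for $X_{i},X_{j}\in\xi$. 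Either way the computation is short once the Frobenius identity is differentiated, and the key step is that single application of $d$ to $d\gamma=-\iota_{X}d\gamma\wedge\gamma$.
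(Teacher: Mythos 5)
The paper states Lemma \ref{i(X) d(gama ) db closed} without proof (it is among the results recalled from \cite{Bartolomeis10}), so there is no in-paper argument to compare with; judged on its own, your proof is correct. The chain you propose is sound: $\iota _{X}\iota _{X}d\gamma =0$ places $\iota _{X}d\gamma $ in $\mathcal{Z}^{1}\left( L\right) $, differentiating the Frobenius identity $d\gamma =-\iota _{X}d\gamma \wedge \gamma $ gives $d\left( \iota _{X}d\gamma \right) \wedge \gamma =\iota _{X}d\gamma \wedge d\gamma =-\left( \iota _{X}d\gamma \wedge \iota _{X}d\gamma \right) \wedge \gamma =0$, and since any form $\omega $ with $\omega \wedge \gamma =0$ and $\gamma \left( X\right) =1$ satisfies $\omega =\gamma \wedge \iota _{X}\omega $, formula (\ref{db}) then yields $d_{b}\left( \iota _{X}d\gamma \right) =0$. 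Two cosmetic remarks: the aside computing ``the first term'' via $\mathcal{L}_{X}$ is circular and does no work, and the ``wedging with $\gamma $'' step is superfluous, since $\iota _{X}d\gamma \wedge \iota _{X}d\gamma =0$ already gives $d\left( \iota _{X}d\gamma \right) \wedge \gamma =0$ directly.
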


\section{Complex Lie algebras of derivation type}

\begin{notation}
Let $V,W$ real vector spaces, $J_{V}$ a complex structure on $V$ , $J_{W}$ a
complex structure on $W$. Then 
\begin{equation*}
\Lambda ^{0,p}V^{\ast }\otimes W=\left\{ \lambda \in \Lambda ^{p}V^{\ast
}\otimes W:\ \lambda \left( v_{1},\cdot \cdot \cdot ,J_{V}v_{k},\cdot \cdot
\cdot ,v_{p}\right) =-J_{W}\ \lambda \left( v_{1},\cdot \cdot \cdot
,v_{k},\cdot \cdot \cdot ,v_{p}\right) \right\} .
\end{equation*}
\end{notation}

\begin{definition}
Let $A$ be a $\mathbb{R}$-algebra with unit and $\mathfrak{g}$ a $A$-module
of derivations of $A$. A structure of complex $A$-Lie algebra of derivation
type is a triple $\left( \mathfrak{g},\left[ \cdot ,\cdot \right] ,J\right) $%
, where $\left( \mathfrak{g},\left[ \cdot ,\cdot \right] \right) $\ is a Lie
algebra verifying%
\begin{equation*}
\left[ aV,W\right] =a\left[ V,W\right] -\left( Wa\right) V
\end{equation*}%
for every $a\in A$, $V,W\in \mathfrak{g}$ and $J$ is a complex structure on $%
\mathfrak{g}$ which is $A$-linear.
\end{definition}

\begin{definition}
Let $\left( \mathfrak{g},\left[ \cdot ,\cdot \right] ,J\right) $ be a
complex $A$-Lie algebra of derivation type. We define the Nijenhuis tensor $%
N=N_{J,\left[ \cdot ,\cdot \right] }$ and \ $\overline{\partial }=\overline{%
\partial }_{J,\left[ \cdot ,\cdot \right] }$ by 
\begin{equation*}
N\left( V,W\right) =\left[ JV,JW\right] -\left[ V,W\right] -J\left[ JV,W%
\right] -J\left[ V,JW\right] ,\ V,W\in \mathfrak{g}
\end{equation*}%
and%
\begin{equation*}
\left( \overline{\partial }W\right) \left( V\right) =\frac{1}{2}\left( \left[
V,W\right] +J\left[ JV,W\right] \right) +\frac{1}{4}N\left( V,W\right) ,\
V,W\in \mathfrak{g}.
\end{equation*}
\end{definition}

\begin{remark}
It is easy to see that $N$ is $A$-bilinear and $N\left( JV,W\right) =N\left(
V,JW\right) =-JN\left( V,W\right) $, so $N\in \Lambda ^{0,2}\mathfrak{g}%
^{\ast }\otimes \mathfrak{g}$. Indeed%
\begin{eqnarray*}
N\left( V,JW\right) &=&\left[ JV,-W\right] -\left[ V,JW\right] -J\left[ JV,JW%
\right] -J\left[ V,-W\right] \\
&=&-\left[ V,JW\right] -\left[ JV,W\right] +J\left[ V,W\right] -J\left[ JV,JW%
\right] =N\left( JV,W\right)
\end{eqnarray*}%
and 
\begin{equation*}
N\left( JV,W\right) =-J\left( \left[ JV,JW\right] -\left[ V,W\right] -J\left[
JV,W\right] -J\left[ V,JW\right] \right) =-JN\left( V,W\right) .
\end{equation*}
\end{remark}

\begin{lemma}
Let $\left( \mathfrak{g},\left[ \cdot ,\cdot \right] ,J\right) $ be a
complex $A$-Lie algebra of derivation type. Then:

i) $\overline{\partial }:\Lambda ^{0,0}\mathfrak{g}^{\ast }\otimes \mathfrak{%
g}\rightarrow \Lambda ^{0,1}\mathfrak{g}^{\ast }\otimes \mathfrak{g}$;

ii) $\overline{\partial }J-J\overline{\partial }=0$;

iii) $\overline{\partial }\left( aW\right) \left( V\right) =\left( \overline{%
\partial }a\right) \left( V\right) W+a\left( \overline{\partial }W\right)
\left( V\right) $ where%
\begin{equation*}
\left( \overline{\partial }a\right) \left( V\right) W=\frac{1}{2}\left(
\left( Va\right) W+J\left( Va\right) JW\right) .
\end{equation*}
\end{lemma}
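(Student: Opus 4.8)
The plan is to verify the three assertions directly from the definition of $\overline{\partial}=\overline{\partial}_{J,[\cdot,\cdot]}$, treating them essentially as formal consequences of $A$-bilinearity of $N$ and the derivation identity $[aV,W]=a[V,W]-(Wa)V$.

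\emph{Step (i).} I must show that for fixed $W\in\mathfrak{g}$, the map $V\mapsto(\overline{\partial}W)(V)$ lies in $\Lambda^{0,1}\mathfrak{g}^\ast\otimes\mathfrak{g}$; i.e. it is $\mathbb{R}$-linear in $V$ and satisfies $(\overline{\partial}W)(JV)=-J(\overline{\partial}W)(V)$. Linearity in $V$ is clear since $[\cdot,W]$, $J$, and $N(\cdot,W)$ are all additive. For the anti-$J$-linearity, I compute $(\overline{\partial}W)(JV)=\tfrac12([JV,W]+J[J^2V,W])+\tfrac14 N(JV,W)=\tfrac12([JV,W]-J[V,W])+\tfrac14 N(JV,W)$. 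Using the Remark's identity $N(JV,W)=-JN(V,W)$, this equals $\tfrac12([JV,W]-J[V,W])-\tfrac14 JN(V,W)$. On the other hand $-J(\overline{\partial}W)(V)=-\tfrac12(J[V,W]+J^2[JV,W])-\tfrac14 JN(V,W)=-\tfrac12(J[V,W]-[JV,W])-\tfrac14 JN(V,W)$, which is the same expression. Hence (i).

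\emph{Step (ii).} Here I need $\overline{\partial}(JW)(V)=J(\overline{\partial}W)(V)$ for all $V,W$. Expanding the left side: $\overline{\partial}(JW)(V)=\tfrac12([V,JW]+J[JV,JW])+\tfrac14 N(V,JW)$. Since $N(V,JW)=N(JV,W)=-JN(V,W)$ (again the Remark), the $N$-term becomes $-\tfrac14 JN(V,W)$. For the bracket part, $J(\overline{\partial}W)(V)=\tfrac12(J[V,W]+J^2[JV,W])+\tfrac14 JN(V,W)=\tfrac12(J[V,W]-[JV,W])+\tfrac14 JN(V,W)$. Comparing, the required identity $\tfrac12([V,JW]+J[JV,JW])-\tfrac14 JN(V,W)=\tfrac12(J[V,W]-[JV,W])+\tfrac14 JN(V,W)$ rearranges to $[V,JW]+J[JV,JW]-J[V,W]+[JV,W]=JN(V,W)$; and indeed the left side is precisely $J\big([JV,JW]-[V,W]-J[JV,W]-J[V,JW]\big)+\big([V,JW]+J[V,JW]+[JV,W]+J^2[JV,W]\big)$... so I will instead just substitute the definition of $N(V,W)$ directly into $JN(V,W)$ and match terms; this is a short bookkeeping check with no subtlety once the signs from $J^2=-1$ are tracked carefully.

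\emph{Step (iii), the Leibniz rule.} This is the step I expect to carry the real content, since it is where the derivation identity (not just $A$-bilinearity) enters. Starting from $\overline{\partial}(aW)(V)=\tfrac12([V,aW]+J[JV,aW])+\tfrac14 N(V,aW)$ and using $[V,aW]=a[V,W]+(Va)W$ (the derivation rule, with appropriate sign convention — here $[aW,V]=a[W,V]-(Va)W$ so $[V,aW]=a[V,W]+(Va)W$), together with $[JV,aW]=a[JV,W]+((JV)a)W$ and $N(V,aW)=aN(V,W)$ by $A$-bilinearity, I collect the terms proportional to $a$ to get exactly $a(\overline{\partial}W)(V)$, and the remaining terms give $\tfrac12\big((Va)W+J((JV)a)W\big)$. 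The claimed formula asserts this equals $(\overline{\partial}a)(V)W=\tfrac12((Va)W+J(Va)JW)$, so I must check $J((JV)a)W=J(Va)JW$, i.e. $((JV)a)W=(Va)JW$. Since $(JV)a$ and $Va$ are scalars in $A$ and $J$ is $A$-linear, $J(Va)JW$... wait — $(Va)JW=J((Va)W)$ by $A$-linearity of $J$, so $J(Va)JW=J^2((Va)W)=-(Va)W$, which would make the two sides disagree unless I have mis-tracked a sign in the Leibniz expansion. The resolution (and the genuinely delicate point) is the precise sign convention in $[aV,W]=a[V,W]-(Wa)V$: applied as $[JV,aW]$, one treats $aW$ as $a\cdot W$ and the derivation that acts is the \emph{first} slot $JV$ viewed abstractly, so $[JV,aW]=a[JV,W]+((JV)a)W$; but the cross term in $(\overline{\partial}a)(V)$ should then be reconciled using $\overline{\partial}J=J\overline{\partial}$ from part (ii) and the fact that $(JV)a$, $Va$ are related through the complex structure only after noting $\overline{\partial}a$ is itself defined as a $(0,1)$-type object. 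I will therefore prove (iii) by first establishing the stated formula for $\overline{\partial}a$ as the definition of the right-hand side, then verifying the collected non-$a$ terms match it; the main obstacle is bookkeeping the action of $J$ on the scalar-valued directional derivatives consistently with the convention fixed in the definition, and I will double-check it against part (ii) for consistency.
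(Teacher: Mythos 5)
Your parts (i) and (ii) are correct and are essentially the paper's own argument: (i) is the direct check that $(\overline{\partial}W)(JV)=-J(\overline{\partial}W)(V)$ using $N(JV,W)=-JN(V,W)$, and (ii) reduces, exactly as you say, to the identity $[V,JW]+J[JV,JW]-J[V,W]+[JV,W]=JN(V,W)$, which is immediate upon expanding $JN(V,W)$ from the definition (your abandoned intermediate decomposition was not needed).

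In (iii), however, you stop short of a proof, and the place where you stall is not a real difficulty. Your Leibniz expansion is correct and is precisely the paper's computation: $[V,aW]=a[V,W]+(Va)W$, $[JV,aW]=a[JV,W]+((JV)a)W$, $N(V,aW)=aN(V,W)$, so that $\overline{\partial}(aW)(V)=a(\overline{\partial}W)(V)+\tfrac12\bigl((Va)W+((JV)a)JW\bigr)$, using $A$-linearity of $J$ on the term $J\bigl(((JV)a)W\bigr)$. The remaining issue is purely how to read the symbol $J(Va)JW$ in the statement: it denotes $((JV)a)\,JW$ (the paper's proof writes it $(JVa)JW$), i.e.\ $J$ is applied to the argument $V$ before differentiating $a$ --- consistent with $(\overline{\partial}a)(V)$ being the $(0,1)$-part $\tfrac12\bigl(Va+i\,(JV)a\bigr)$ of $da$ acting on $\mathfrak{g}$ with $i$ realized by $J$. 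Your literal parsing, $J$ applied to the vector $(Va)JW$, gives $-(Va)W$ and would force $(\overline{\partial}a)(V)W\equiv 0$, which is absurd, so it cannot be the intended meaning; once the notation is read correctly your computed remainder term coincides with $(\overline{\partial}a)(V)W$ verbatim and the proof is finished. In particular there is no sign problem in your derivation identity, and no appeal to part (ii) or to any further ``reconciliation'' is needed; as written, though, your (iii) ends with an unexecuted plan and an unresolved doubt rather than a conclusion, so that step must be closed as above.
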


\begin{proof}
i)%
\begin{eqnarray*}
\left( \overline{\partial }W\right) \left( JV\right) &=&\frac{1}{2}\left( %
\left[ JV,W\right] +J\left[ -V,W\right] \right) +\frac{1}{4}N\left(
JV,W\right) \\
&=&-J\left( \frac{1}{2}\left( J\left[ JV,W\right] +\left[ V,W\right] \right)
+\frac{1}{4}N\left( JV,W\right) \right) =-J\left( \overline{\partial }%
W\right) \left( V\right) .
\end{eqnarray*}

ii) Since%
\begin{equation*}
N\left( V,JW\right) -J\left[ V,W\right] +J\left[ JV,JW\right] +\left[ JV,W%
\right] +\left[ V,JW\right] =0,
\end{equation*}%
we have%
\begin{eqnarray*}
2\left( \overline{\partial }JW\right) \left( V\right) &=&\left[ V,JW\right]
+J\left[ JV,JW\right] +\frac{1}{2}N\left( V,JW\right) \\
&=&J\left[ V,W\right] -\left[ JV,W\right] -\frac{1}{2}N\left( V,JW\right)
=J\left( \left( \overline{\partial }W\right) \left( V\right) \right) .
\end{eqnarray*}

iii)%
\begin{eqnarray*}
\overline{\partial }\left( aW\right) \left( V\right) &=&\frac{1}{2}\left( %
\left[ V,aW\right] +J\left[ JV,aW\right] \right) +\frac{1}{4}N\left(
V,aW\right) \\
&=&\frac{1}{2}\left( -a\left[ W,V\right] +\left( Va\right) W+J\left( -a\left[
W,JV\right] +\left( JVa\right) W\right) \right) +\frac{1}{4}aN\left(
V,W\right) \\
&=&\frac{1}{2}\left( \left( Va\right) W+\left( JVa\right) JW\right) +a\left( %
\left[ V,W\right] +J\left[ JV,W\right] +\frac{1}{4}N\left( V,W\right) \right)
\\
&=&\left( \overline{\partial }a\right) \left( V\right) W+a\left( \overline{%
\partial }W\right) \left( V\right) .
\end{eqnarray*}
\end{proof}

\begin{lemma}
\label{rel complex structures}Let $V$ be a real vector space and $J,%
\widetilde{J}$ complex structures on $J$ such that $\det \left( J+\widetilde{%
J}\right) \neq 0$. There exists a unique $S\in End_{\mathbb{R}}\left(
V\right) $ such that $\widetilde{J}=\left( I+S\right) J\left( I+S\right)
^{-1}$, $SJ+JS=0$.
\end{lemma}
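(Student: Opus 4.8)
The plan is to produce $S$ explicitly. Write $A=J+\widetilde{J}$ and $B=J-\widetilde{J}$; by hypothesis $A\in \mathrm{End}_{\mathbb{R}}(V)$ is invertible. Since the relation $\widetilde{J}=(I+S)J(I+S)^{-1}$ is equivalent to $(I+S)J=\widetilde{J}(I+S)$, i.e. to $SJ-\widetilde{J}S=\widetilde{J}-J$, and since the extra requirement $SJ+JS=0$ rewrites the left-hand side as $-JS-\widetilde{J}S=-(J+\widetilde{J})S$, the natural candidate is
\begin{equation*}
S=(J+\widetilde{J})^{-1}(J-\widetilde{J})=A^{-1}B .
\end{equation*}
So the first step is to record the one algebraic identity everything rests on: since $J^{2}=\widetilde{J}^{2}=-I$, a direct expansion gives $AB=\widetilde{J}J-J\widetilde{J}=-BA$, i.e. $A$ and $B$ anticommute. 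Right-multiplying $AB=-BA$ by $A^{-1}$ and left-multiplying by $A^{-1}$ yields respectively $BA^{-1}=-A^{-1}B$, hence $A^{-1}B+BA^{-1}=0$.

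Next I would check that this $S$ satisfies the two conditions. For $SJ+JS=0$: using $J=\tfrac12(A+B)$ one computes $SA+AS=A^{-1}BA+B=-B+B=0$ and $SB+BS=(A^{-1}B+BA^{-1})B=0$ by the identity above, whence $SJ+JS=\tfrac12\big((SA+AS)+(SB+BS)\big)=0$. For the conjugation relation, observe that $I+S=I+A^{-1}B=A^{-1}(A+B)=2A^{-1}J$, which is invertible (both $A$ and $J$ are) with inverse $\tfrac12 J^{-1}A$; therefore $(I+S)J(I+S)^{-1}=A^{-1}JA$, and since $JA=J^{2}+J\widetilde{J}=-I+J\widetilde{J}=J\widetilde{J}+\widetilde{J}^{2}=A\widetilde{J}$, we get $(I+S)J(I+S)^{-1}=\widetilde{J}$, as required.

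Finally, uniqueness: if $S'$ also satisfies $(I+S')J=\widetilde{J}(I+S')$ and $S'J+JS'=0$, set $T:=S-S'$. Subtracting the two conjugation identities gives $TJ=\widetilde{J}T$, while $T$ anticommutes with $J$ (being a difference of two endomorphisms that do), so $TJ=-JT$; combining, $(J+\widetilde{J})T=\widetilde{J}T+JT=TJ-TJ=0$, and invertibility of $A=J+\widetilde{J}$ forces $T=0$, i.e. $S=S'$.

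All the computations are short and mechanical; the only genuinely load-bearing observation — and the one place to be careful — is that $J+\widetilde{J}$ and $J-\widetilde{J}$ anticommute, which is precisely what makes the explicit $S=A^{-1}B$ simultaneously anticommute with $J$ and conjugate $J$ to $\widetilde{J}$. I do not expect any real obstacle beyond getting this anticommutation and the bookkeeping of $A^{-1}$ straight.
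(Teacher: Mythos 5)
Your proof is correct, and at the level of strategy it is the same as the paper's: solve the two conditions for $S$ to obtain an explicit formula (which simultaneously gives uniqueness), then verify that this $S$ has both required properties. The differences lie in the verification, and they are worth recording. The paper writes the candidate as $S=(J-\widetilde{J})(J+\widetilde{J})^{-1}$ and, in the converse direction, only checks $SJ+JS=0$, asserting that $\widetilde{J}=(I+S)J(I+S)^{-1}$ then follows; you instead verify both conditions directly, organizing everything around the anticommutation $(J+\widetilde{J})(J-\widetilde{J})=-(J-\widetilde{J})(J+\widetilde{J})$, the decomposition $J=\tfrac{1}{2}(A+B)$, and the identity $I+S=2A^{-1}J$, and you add a clean separate uniqueness argument via $T=S-S'$. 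Your bookkeeping also gets the order of the factors right: the linear equation $\widetilde{J}+\widetilde{J}S=J-JS$ (the paper's own first display) yields $(J+\widetilde{J})S=J-\widetilde{J}$, i.e. $S=(J+\widetilde{J})^{-1}(J-\widetilde{J})=A^{-1}B$, and since $A^{-1}B=-BA^{-1}$ by the very anticommutation you isolate, this is not the same endomorphism as $(J-\widetilde{J})(J+\widetilde{J})^{-1}$; it is your order that satisfies $(I+S)J(I+S)^{-1}=\widetilde{J}$, while both orders anticommute with $J$, which is why a check of $SJ+JS=0$ alone cannot detect the discrepancy. So your argument buys a complete verification of the conjugation identity (not just the anticommutation) at the modest cost of the auxiliary identity $AB=-BA$, which, as you note, is the one load-bearing observation.
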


\begin{proof}
Let $S\in End_{\mathbb{R}}\left( V\right) $ such that$\ \widetilde{J}=\left(
I+S\right) J\left( I+S\right) ^{-1},$ $SJ+JS=0$. Then $\widetilde{J}\left(
I+S\right) =\left( I+S\right) J$, so$\ \widetilde{J}+\widetilde{J}%
S=J+SJ=J-JS $ and it follows that $S=\left( J-\widetilde{J}\right) \left( J+%
\widetilde{J}\right) ^{-1}$.

Conversely, define $S=\left( J-\widetilde{J}\right) \left( J+\widetilde{J}%
\right) ^{-1}$. Since 
\begin{eqnarray*}
SJ+JS &=&\left( J-\widetilde{J}\right) \left( J+\widetilde{J}\right)
^{-1}J+J\left( J-\widetilde{J}\right) \left( J+\widetilde{J}\right) ^{-1} \\
&=&\left( J-\widetilde{J}\right) \left( I-\widetilde{J}J\right) ^{-1}+\left(
-I-J\widetilde{J}\right) \left( J+\widetilde{J}\right) ^{-1} \\
&=&\left( J-\widetilde{J}\right) \left( I-\widetilde{J}J\right) ^{-1}+\left( 
\widetilde{J}-J\right) \widetilde{J}\left( J+\widetilde{J}\right) ^{-1} \\
&=&\left( J-\widetilde{J}\right) \left( I-\widetilde{J}J\right) ^{-1}+\left( 
\widetilde{J}-J\right) \left( -\widetilde{J}J+I\right) ^{-1}=0,
\end{eqnarray*}%
it follows that $\widetilde{J}=\left( I+S\right) J\left( I+S\right) ^{-1}$.
\end{proof}

\begin{proposition}
\label{N Ntilda}Let $\left( \mathfrak{g},\left[ \cdot ,\cdot \right]
,J\right) $, $\left( \mathfrak{g},\left[ \cdot ,\cdot \right] ,\widetilde{J}%
\right) $ be complex $A$-Lie algebras of derivation type such that $\det
\left( J+\widetilde{J}\right) \neq 0$. Let $S\in End_{\mathbb{R}}\left( 
\mathfrak{g}\right) $ such that $\widetilde{J}=\left( I+S\right) J\left(
I+S\right) ^{-1}$, $SJ+JS=0$. Then%
\begin{eqnarray}
N_{\widetilde{J}}\left( \left( I+S\right) V,\left( I+S\right) W\right)
&=&\left( I-S\right) ^{-1}\left( N_{J}\left( V,W\right) +S\left( N_{J}\left(
V,W\right) -N_{J}\left( SV,SW\right) \right) \right)  \notag \\
&&-4\left( I-S\right) ^{-1}\left( \overline{\partial }_{J}S+\frac{1}{2}\left[
S,S\right] \right) \left( V,W\right)  \label{Rel N Ntild}
\end{eqnarray}%
where%
\begin{equation*}
\overline{\partial }_{J}S\left( V,W\right) =\overline{\partial }_{J}\left(
SW\right) \left( V\right) -\overline{\partial }_{J}\left( SV\right) \left(
W\right) -\frac{1}{2}S\left( \left[ V,W\right] -\left[ JV,JW\right] \right)
\end{equation*}%
and%
\begin{eqnarray*}
\left[ S,S\right] \left( V,W\right) &=&\left[ SV,SW\right] -\left[ JSV,JSW%
\right] -S\left( \left[ SV,W\right] +\left[ V,SW\right] +J\left[ V,JSW\right]
+J\left[ JSV,W\right] \right) \\
&&-\frac{1}{2}\left( SN_{J}\left( SV,W\right) +SN_{J}\left( V,SW\right)
-N_{J}\left( SV,SW\right) \right) .
\end{eqnarray*}
\end{proposition}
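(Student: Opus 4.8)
The plan is a direct computation from the definitions of $N$ and $\overline{\partial}$, the only structural ingredient being the relation between $J$ and $\widetilde{J}$. I begin by recording the consequences of $SJ+JS=0$ that are used throughout. First, $S$ anticommutes with $J$, so $S^{2}$ commutes with $J$; and since $(I+S)$ is invertible (it is, as $\widetilde{J}$ is well-defined) so is $(I-S)$, because $Sv=v$ would give $(I+S)Jv=Jv+SJv=Jv-Jv=0$, forcing $v=0$. Second, $\widetilde{J}(I+S)=(I+S)J$, so $\widetilde{J}\big((I+S)V\big)=(I+S)JV$. Third, since $(I-S^{2})$ commutes with $J$,
\begin{equation*}
(I-S)\,\widetilde{J}=(I-S^{2})J(I+S)^{-1}=J(I-S^{2})(I+S)^{-1}=J(I-S),
\end{equation*}
i.e.\ $\widetilde{J}=(I-S)^{-1}J(I-S)$. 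This last identity is exactly what produces the global factor $(I-S)^{-1}$ on the right of (\ref{Rel N Ntild}): multiplying that identity on the left by $(I-S)$, it is equivalent to
\begin{equation*}
(I-S)\,N_{\widetilde{J}}\big((I+S)V,(I+S)W\big)=N_{J}(V,W)+S\big(N_{J}(V,W)-N_{J}(SV,SW)\big)-4\Big(\overline{\partial}_{J}S+\tfrac12[S,S]\Big)(V,W),
\end{equation*}
which is the form I would establish.

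To compute the left-hand side I substitute $V'=(I+S)V$ and $W'=(I+S)W$ into $N_{\widetilde{J}}(V',W')=[\widetilde{J}V',\widetilde{J}W']-[V',W']-\widetilde{J}[\widetilde{J}V',W']-\widetilde{J}[V',\widetilde{J}W']$, replace every $\widetilde{J}(I+S)(\cdot)$ by $(I+S)J(\cdot)$, and then apply $(I-S)$ on the left: the first two terms are merely multiplied by $I-S$, while in the last two the relation $(I-S)\widetilde{J}=J(I-S)$ turns $(I-S)\widetilde{J}[\,\cdot\,]$ into $J(I-S)[\,\cdot\,]$, so $\widetilde{J}$ is eliminated. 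Each bracket $\big[(I+S)A,(I+S)B\big]$ with $A,B\in\{V,W,JV,JW\}$ is then expanded into four brackets by $\mathbb{R}$-bilinearity, $S$ is moved past $J$ using $SJ=-JS$, and $I-S$ and $J$ are distributed. The outcome is a finite sum of terms $[A,B]$, $S[A,B]$, $J[A,B]$, $JS[A,B]$ with $A,B\in\{V,W,SV,SW,JV,JW,JSV,JSW\}$, and the natural bookkeeping device is to sort them by the total number of occurrences of $S$. The terms free of $S$ recombine, precisely by the definition of $N_{J}$, into $N_{J}(V,W)$; the terms of top degree $3$ in $S$ recombine, again using $SJ=-JS$ and the definition of $N_{J}$, into $-S\,N_{J}(SV,SW)$. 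These match the degree-$0$ and degree-$3$ pieces of the right-hand side.

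What is left is the degree-$1$ and degree-$2$ matching. On the right-hand side the degree-$1$ part equals $S\,N_{J}(V,W)-4\,\overline{\partial}_{J}S(V,W)$ and the degree-$2$ part equals $-2\,[S,S](V,W)$; expanding $\overline{\partial}_{J}S$ and $[S,S]$ via the formulas in the statement and via $\big(\overline{\partial}_{J}W\big)(V)=\tfrac12\big([V,W]+J[JV,W]\big)+\tfrac14N_{J}(V,W)$, one compares these with the degree-$1$ and degree-$2$ parts collected from the left-hand side. Along the way one repeatedly uses that $N_{J}$ is $A$-bilinear and of type $(0,2)$, i.e.\ $N_{J}(JV,W)=N_{J}(V,JW)=-JN_{J}(V,W)$ (the Remark preceding this Proposition), so as to bring occurrences of $N_{J}(SV,W)$, $N_{J}(V,SW)$, $N_{J}(JSV,W)$, $N_{J}(SV,JW)$ and the like into a common normal form. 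This final matching is where essentially all of the work lies: there is no conceptual obstacle, but the number of terms is large and the signs are delicate, so the collection must be organized systematically, the stratification by degree in $S$ being the main tool for keeping it under control. Two useful internal checks are the specialization $S=0$, where both sides reduce to $N_{J}(V,W)$, and the observation that the right-hand side of (\ref{Rel N Ntild}) is a well-defined element of $\mathfrak{g}$ whereas the individual bracket expressions produced in the expansion are not tensorial in $V,W$.
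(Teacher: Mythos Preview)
Your approach is correct and essentially the same as the paper's: both proofs multiply $N_{\widetilde{J}}\big((I+S)V,(I+S)W\big)$ on the left by $(I-S)$, use the identity $(I-S)\widetilde{J}=J(I-S)$ (equivalently, $(I+S)J(I+S)^{-1}=(I-S)^{-1}(I+S)J$) to eliminate $\widetilde{J}$, expand all brackets bilinearly, and then regroup the resulting collection of terms into $N_{J}$, $\overline{\partial}_{J}S$, and $[S,S]$. Your stratification by total degree in $S$ is a cleaner bookkeeping device than the paper's explicit line-by-line labeling, but the underlying computation is identical; note that the paper does carry out the degree-$1$ and degree-$2$ matching in full, which is indeed where all the work lies and which your sketch leaves to be filled in.
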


\begin{proof}
Denote 
\begin{equation*}
\widetilde{E}\left( V,W\right) =N_{\widetilde{J}}\left( \left( I+S\right)
V,\left( I+S\right) W\right) .
\end{equation*}%
We have 
\begin{eqnarray*}
\widetilde{E}\left( V,W\right) &=&\left[ \widetilde{J}\left( I+S\right) V,%
\widetilde{J}\left( I+S\right) W\right] -\left[ \left( I+S\right) V,\left(
I+S\right) W\right] \\
&&-\widetilde{J}\left[ \widetilde{J}\left( I+S\right) V,\left( I+S\right) W%
\right] -\widetilde{J}\left[ \left( I+S\right) V,\widetilde{J}\left(
I+S\right) W\right] \\
&=&\left[ \left( I+S\right) JV,\left( I+S\right) JW\right] -\left[ \left(
I+S\right) V,\left( I+S\right) W\right] \\
&&-\left( I+S\right) J\left( I+S\right) ^{-1}\left[ \left( I+S\right)
JV,\left( I+S\right) W\right] \\
&&-\left( I+S\right) J\left( I+S\right) ^{-1}\left[ \left( I+S\right)
V,\left( I+S\right) JW\right] \\
&=&\left[ JV,JW\right] +\left[ SJV,SJW\right] +\left[ JV,SJW\right] +\left[
SJV,JW\right] \\
&&-\left[ V,W\right] -\left[ SV,SW\right] -\left[ V,SW\right] -\left[ SV,W%
\right] \\
&&-\left( I+S\right) J\left( I+S\right) ^{-1}\left( \left[ JV,W\right] +%
\left[ SJV,SW\right] +\left[ JV,SW\right] +\left[ SJV,W\right] \right) \\
&&-\left( I+S\right) J\left( I+S\right) ^{-1}\left( \left[ V,JW\right] +%
\left[ SV,SJW\right] +\left[ V,SJW\right] +\left[ SV,JW\right] \right) .
\end{eqnarray*}%
But%
\begin{eqnarray*}
\left( I+S\right) J\left( I+S\right) ^{-1} &=&\left( I-S\right) ^{-1}\left(
\left( I-S\right) \left( I+S\right) J\left( I+S\right) ^{-1}\right) \\
&=&\left( I-S\right) ^{-1}\left( I-S^{2}\right) J\left( I+S\right) ^{-1} \\
&=&\left( I-S\right) ^{-1}\left( I+S\right) \left( I-S\right) J\left(
I+S\right) ^{-1} \\
&=&\left( I-S\right) ^{-1}\left( I+S\right) \left( J-SJ\right) \left(
I+S\right) ^{-1} \\
&=&\left( I-S\right) ^{-1}\left( I+S\right) \left( J+JS\right) \left(
I+S\right) ^{-1} \\
&=&\left( I-S\right) ^{-1}\left( I+S\right) J
\end{eqnarray*}%
so%
\begin{eqnarray}
\widetilde{E}\left( V,W\right) &=&\left[ JV,JW\right] +\left[ JSV,JSW\right]
-\left[ JV,JSW\right] -\left[ JSV,JW\right]  \notag \\
&&-\left[ V,W\right] -\left[ SV,SW\right] -\left[ V,SW\right] -\left[ SV,W%
\right]  \notag \\
&&-\left( I-S\right) ^{-1}\left( I+S\right) J\left( \left[ JV,W\right] -%
\left[ JSV,SW\right] +\left[ JV,SW\right] -\left[ JSV,W\right] \right) 
\notag \\
&&-\left( I-S\right) ^{-1}\left( I+S\right) J\left( \left[ V,JW\right] -%
\left[ SV,JSW\right] -\left[ V,JSW\right] +\left[ SV,JW\right] \right)
\label{0}
\end{eqnarray}%
We set%
\begin{equation*}
E\left( V,W\right) =\left( I-S\right) \widetilde{E}\left( V,W\right)
\end{equation*}%
and from (\ref{0}) we have%
\begin{eqnarray}
E\left( V,W\right) &=&\left[ JV,JW\right] +\left[ JSV,JSW\right] -\left[
JV,JSW\right] -\left[ JSV,JW\right]  \label{1A} \\
&&-S\left( \left[ JV,JW\right] +\left[ JSV,JSW\right] -\left[ JV,JSW\right] -%
\left[ JSV,JW\right] \right)  \label{2} \\
&&-\left[ V,W\right] -\left[ SV,SW\right] -\left[ V,SW\right] -\left[ SV,W%
\right]  \label{3} \\
&&+S\left( \left[ V,W\right] +\left[ SV,SW\right] +\left[ V,SW\right] +\left[
SV,W\right] \right)  \label{4} \\
&&-J\left( \left[ JV,W\right] -\left[ JSV,SW\right] +\left[ JV,SW\right] -%
\left[ JSV,W\right] \right)  \label{5} \\
&&-SJ\left( \left[ JV,W\right] -\left[ JSV,SW\right] +\left[ JV,SW\right] -%
\left[ JSV,W\right] \right)  \label{6} \\
&&-J\left( \left[ V,JW\right] -\left[ SV,JSW\right] -\left[ V,JSW\right] +%
\left[ SV,JW\right] \right)  \label{7} \\
&&-SJ\left( \left[ V,JW\right] -\left[ SV,JSW\right] -\left[ V,JSW\right] +%
\left[ SV,JW\right] \right) .  \label{8}
\end{eqnarray}

By adding the first terms, (respectively second terms, third terms and forth
terms) in (\ref{1A}), (\ref{3}), (\ref{5}) and (\ref{7}) and then in (\ref{2}%
), (\ref{4}), (\ref{6}) and (\ref{8}) we obtain the following form of $%
E\left( V,W\right) $: 
\begin{eqnarray}
&&\left[ JV,JW\right] -\left[ V,W\right] -J\left[ JV,W\right] -J\left[ V,JW%
\right]  \label{11} \\
&&+\left[ JSV,JSW\right] -\left[ SV,SW\right] +J\left[ JSV,SW\right] +J\left[
SV,JSW\right]  \label{12} \\
&&-\left[ JV,JSW\right] \mathbf{-}\left[ V,SW\right] -J\left[ JV,SW\right] +J%
\left[ V,JSW\right]  \label{13} \\
&&-\left[ JSV,JW\right] -\left[ SV,W\right] +J\left[ JSV,W\right] -J\left[
SV,JW\right]  \label{14} \\
&&+S\left( -\left[ JV,JW\right] +\left[ V,W\right] -J\left[ JV,W\right] -J%
\left[ V,JW\right] \right)  \label{15} \\
&&+S\left( -\left[ JSV,JSW\right] +\left[ SV,SW\right] +J\left[ JSV,SW\right]
+J\left[ SV,JSW\right] \right)  \label{16} \\
&&+S\left( \left[ JV,JSW\right] +\left[ V,SW\right] -J\left[ JV,SW\right] +J%
\left[ V,JSW\right] \right)  \label{17} \\
&&+S\left( \left[ JSV,JW\right] +\left[ SV,W\right] +J\left[ JSV,W\right] -J%
\left[ SV,JW\right] \right)  \label{18}
\end{eqnarray}

Now 
\begin{eqnarray*}
\left( \ref{11}\right) &=&N_{J}\left( V,W\right) \\
\left( \ref{12}\right) &=&N_{J}\left( SV,SW\right) +2\left( J\left[ JSV,SW%
\right] +J\left[ SV,JSW\right] \right) \\
\left( \ref{13}\right) &=&-N_{J}\left( V,SW\right) -2\left( \left[ V,SW%
\right] +J\left[ JV,SW\right] \right) \\
\left( \ref{14}\right) &=&-N_{J}\left( SV,W\right) -2\left( \left[ SV,W%
\right] +J\left[ SV,JW\right] \right) \\
\left( \ref{15}\right) &=&S\left( -\left[ JV,JW\right] +\left[ V,W\right] -J%
\left[ JV,W\right] -J\left[ V,JW\right] \right) \\
\left( \ref{16}\right) &=&-SN_{J}\left( SV,SW\right) \\
\left( \ref{17}\right) &=&SN_{J}\left( V,SW\right) +2S\left( \left[ V,SW%
\right] +J\left[ V,JSW\right] \right) \\
\left( \ref{18}\right) &=&SN_{J}\left( SV,W\right) +2S\left( \left[ SV,W%
\right] +J\left[ JSV,W\right] \right)
\end{eqnarray*}%
so%
\begin{eqnarray}
E\left( V,W\right) &=&N_{J}\left( V,W\right) -SN_{J}\left( SV,SW\right) -2 
\left[ V,SW\right]  \notag \\
&&-2J\left[ JV,SW\right] -2\left[ SV,W\right] -2J\left[ SV,JW\right]  \notag
\\
&&+S\left[ V,W\right] -S\left[ JV,JW\right] -N_{J}\left( V,SW\right)
-N_{J}\left( SV,W\right)  \notag \\
&&+2S\left( \left[ V,SW\right] +J\left[ V,JSW\right] +\left[ SV,W\right] +J%
\left[ JSV,W\right] \right)  \label{E} \\
&&+SN_{J}\left( SV,W\right) +SN_{J}\left( V,SW\right) +N_{J}\left(
SV,SW\right)  \notag \\
&&+2\left( J\left[ JSV,SW\right] +J\left[ SV,JSW\right] \right) +S\left( -J%
\left[ JV,W\right] -J\left[ V,JW\right] \right)  \notag
\end{eqnarray}%
Since%
\begin{eqnarray*}
\overline{\partial }_{J}S\left( V,W\right) &=&\left( \overline{\partial }%
_{J}SW\right) \left( V\right) -\left( \overline{\partial }_{J}SV\right)
\left( W\right) -\frac{1}{2}S\left( \left[ V,W\right] -\left[ JV,JW\right]
\right) \\
&=&\frac{1}{2}\left( \left[ V,SW\right] +J\left[ JV,SW\right] -\left[ W,SV%
\right] -J\left[ JW,SV\right] \right) -\frac{1}{2}\left( S\left[ V,W\right]
-S\left[ JV,JW\right] \right) \\
&&+\frac{1}{4}\left( N_{J}\left( V,SW\right) -N_{J}\left( W,SV\right) \right)
\end{eqnarray*}%
and%
\begin{equation*}
2\left( J\left[ JSV,SW\right] +J\left[ SV,JSW\right] \right) =-2N_{J}\left(
SV,SW\right) +2\left[ JSV,JSW\right] -2\left[ SV,SW\right]
\end{equation*}%
(\ref{E}) becomes%
\begin{eqnarray*}
E\left( V,W\right) &=&N_{J}\left( V,W\right) -SN_{J}\left( SV,SW\right) \\
&&-2\left[ V,SW\right] -2J\left[ JV,SW\right] +2\left[ WS,V\right] +2J\left[
JW,SV\right] \\
&&+2\left( S\left[ V,W\right] -S\left[ JV,JW\right] \right) -\left( S\left[
V,W\right] -S\left[ JV,JW\right] \right) \\
&&-N_{J}\left( V,SW\right) +N_{J}\left( W,SV\right) \\
&&+2S\left( \left[ V,SW\right] +J\left[ V,JSW\right] +\left[ SV,W\right] +J%
\left[ JSV,W\right] \right) \\
&&+SN_{J}\left( SV,W\right) +SN_{J}\left( V,SW\right) +N_{J}\left(
SV,SW\right) \\
&&-2N_{J}\left( SV,SW\right) +2\left[ JSV,JSW\right] -2\left[ SV,SW\right] \\
&&+S\left( -J\left[ JV,W\right] -J\left[ V,JW\right] \right) \\
&=&N_{J}\left( V,W\right) -SN_{J}\left( SV,SW\right) -4\overline{\partial }%
_{J}S\left( V,W\right) -\left( S\left[ V,W\right] -S\left[ JV,JW\right]
\right) \\
&&+2S\left( \left[ V,SW\right] +J\left[ V,JSW\right] +\left[ SV,W\right] +J%
\left[ JSV,W\right] \right) \\
&&+2\left[ JSV,JSW\right] -2\left[ SV,SW\right] +S\left( -J\left[ JV,W\right]
-J\left[ V,JW\right] \right) \\
&&+SN_{J}\left( SV,W\right) +SN_{J}\left( V,SW\right) -N_{J}\left(
SV,SW\right) \\
&=&N_{J}\left( V,W\right) -SN_{J}\left( SV,SW\right) -4\overline{\partial }%
_{J}S\left( V,W\right) \\
&&+2S\left( \left[ V,SW\right] +J\left[ V,JSW\right] +\left[ SV,W\right] +J%
\left[ JSV,W\right] \right) \\
&&+S\left( \left[ JV,JW\right] -\left[ V,W\right] -J\left[ JV,W\right] -J%
\left[ V,JW\right] \right) \\
&&+2\left[ JSV,JSW\right] -2\left[ SV,SW\right] +SN_{J}\left( V,SW\right)
+SN_{J}\left( SV,W\right) -N_{J}\left( SV,SW\right) \\
&=&N_{J}\left( V,W\right) +SN_{J}\left( V,W\right) -SN_{J}\left(
SV,SW\right) -4\overline{\partial }_{J}S\left( V,W\right) \\
&&-2\left[ SV,SW\right] +2\left[ JSV,JSW\right] +2S\left( \left[ SV,W\right]
+J\left[ JSV,W\right] +\left[ V,SW\right] +J\left[ V,JSW\right] \right) \\
&&+SN_{J}\left( VS,W\right) +SN_{J}\left( V,SW\right) -N_{J}\left(
SV,SW\right) \\
&=&N_{J}\left( V,W\right) +SN_{J}\left( V,W\right) -SN_{J}\left(
SV,SW\right) -4\overline{\partial }_{J}S\left( V,W\right) -2\left[ S,S\right]
\left( V,W\right)
\end{eqnarray*}%
which is equivalent to (\ref{Rel N Ntild}).
\end{proof}

From the Proposition \ref{N Ntilda} we obtain:

\begin{corollary}
\label{N Jtilda=0}Let $\left( \mathfrak{g},\left[ \cdot ,\cdot \right]
,J\right) $, $\left( \mathfrak{g},\left[ \cdot ,\cdot \right] ,\widetilde{J}%
\right) $ be complex $A$-Lie algebras of derivation type such that $\det
\left( J+\widetilde{J}\right) \neq 0$. Let $S\in End_{\mathbb{R}}\left( 
\mathfrak{g}\right) $ such that $\widetilde{J}=\left( I+S\right) J\left(
I+S\right) ^{-1}$, $SJ+JS=0$. Then $N_{\widetilde{J}}=0$ if and only if%
\begin{equation*}
\overline{\partial }_{J}S+\frac{1}{2}\left[ \left[ S,S\right] \right] =\frac{%
1}{4}N_{J}
\end{equation*}%
where $\left[ \left[ S,S\right] \right] =\left[ S,S\right] -\frac{1}{2}%
S\left( N_{J}-N_{J}\left( S,S\right) \right) $ and $N_{J}\left( S,S\right)
\left( V,W\right) =N_{J}\left( SV,SW\right) $.
\end{corollary}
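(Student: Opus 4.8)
The plan is to deduce the statement directly from the transformation formula (\ref{Rel N Ntild}) of Proposition \ref{N Ntilda}, with only a small amount of bookkeeping left to do. First I would record the two invertibility facts that make the argument go through. The map $I+S$ is invertible: this is built into the expression $\widetilde J=(I+S)J(I+S)^{-1}$ produced by Lemma \ref{rel complex structures}. The map $I-S$ is invertible as well — indeed its inverse already occurs in formula (\ref{Rel N Ntild}) — and this can also be seen directly from $SJ+JS=0$: if $Sv=v$ for some $v\neq 0$, then $S(Jv)=-JSv=-Jv$, hence $(I+S)(Jv)=0$ with $Jv\neq 0$, contradicting injectivity of $I+S$; thus $1$, and by the same token $-1$, is not an eigenvalue of $S$.

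Next, since $I+S$ is a bijection of $\mathfrak g$, the tensor $N_{\widetilde J}$ vanishes identically if and only if $N_{\widetilde J}\bigl((I+S)V,(I+S)W\bigr)=0$ for all $V,W\in\mathfrak g$. Multiplying (\ref{Rel N Ntild}) through by $I-S$ and using its injectivity, this is in turn equivalent to
\begin{equation*}
N_{J}(V,W)+S\bigl(N_{J}(V,W)-N_{J}(SV,SW)\bigr)-4\Bigl(\overline{\partial}_{J}S+\tfrac12[S,S]\Bigr)(V,W)=0 .
\end{equation*}

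It then remains only to rearrange this identity. Writing $N_{J}(S,S)(V,W):=N_{J}(SV,SW)$ as in the statement, the displayed equation reads, at the level of tensors,
\begin{equation*}
4\,\overline{\partial}_{J}S+2\,[S,S]=N_{J}+S\bigl(N_{J}-N_{J}(S,S)\bigr),
\end{equation*}
equivalently
\begin{equation*}
\overline{\partial}_{J}S+\tfrac12[S,S]-\tfrac14 S\bigl(N_{J}-N_{J}(S,S)\bigr)=\tfrac14 N_{J} .
\end{equation*}
Recognizing the left-hand side as $\overline{\partial}_{J}S+\tfrac12[[S,S]]$ with $[[S,S]]=[S,S]-\tfrac12 S\bigl(N_{J}-N_{J}(S,S)\bigr)$ gives exactly the asserted equivalence.

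All the substantial computation is already contained in Proposition \ref{N Ntilda}, so I do not expect any genuine obstacle here; the only point that deserves a line of justification is the invertibility of $I-S$ (needed to cancel the factor $(I-S)^{-1}$ in both directions of the argument), which, as noted above, is forced by the relation $SJ+JS=0$ together with the standing invertibility of $I+S$.
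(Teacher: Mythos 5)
Your proof is correct and follows essentially the same route as the paper, which derives Corollary \ref{N Jtilda=0} directly from the transformation formula (\ref{Rel N Ntild}) of Proposition \ref{N Ntilda} by cancelling $(I-S)^{-1}$ and rearranging; your extra remark justifying the invertibility of $I-S$ from $SJ+JS=0$ is a sound way to make explicit a point the paper leaves implicit.
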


\section{Levi flat stuctures}

\begin{definition}
Let $L$ be a smooth manifold. A Levi flat structure on $L$ is a couple $%
\left( \xi ,J\right) $ where $\xi \subset T\left( L\right) $ is an
integrable distribution of codimension $1$ and $J:\xi \rightarrow \xi $
defines a complex integrable structure on each leaf.
\end{definition}

\begin{example}
By W. Lickorish \cite{Lickorish65} (and by an unpublished work of S. Novikov
and H. Zieschang, 1965), any compact orientable 3-manifold has a foliation
of codimension 1. By J. Wood \cite{Wood69} any compact 3-manifold has a
transversaly orientable foliation of codimension 1. It follows that that all
compact orientable $3$-manifolds admit a Levi flat structure.
\end{example}

\begin{notation}
From now on we consider the following setting:

- $L$ a smooth manifold;

- $\left( \xi ,J\right) $ a Levi flat structure on $L$;

- $\left( \gamma ,X\right) $ a $DGLA$ defining couple for $\xi $;

- $\alpha \in \mathcal{Z}^{1}\left( L\right) $ such that $\xi _{\alpha
}=\ker \left( \gamma +\alpha \right) $ is integrable;

- $\mathcal{H}\left( \xi \right) $ the algebra of vector fields on $L$ which
are tangent to $\xi $;

- $\left[ \cdot ,\cdot \right] $ the Lie bracket;

- $\Lambda _{J}^{p,q}\left( \xi \right) $ the $p+q$-forms $\alpha $ on $L$
such that the restriction of $\alpha $ on each leaf $\mathcal{L}$ of the
Levi foliation endowed with the complex structure $J$ is a $\left(
p,q\right) $-form on $\mathcal{L}$ and $\Lambda ^{k}\left( \xi \right)
=\oplus _{p+q=k}\Lambda _{J}^{p,q}\left( \xi \right) $.
\end{notation}

\begin{definition}
For $V,W\in T\left( L\right) $, define $\omega _{\alpha }\left( V\right)
=V-\alpha \left( V\right) X$ and 
\begin{equation*}
\left[ V,W\right] _{\alpha }=\omega _{\alpha }^{-1}\left[ \omega _{\alpha
}\left( V\right) ,\omega _{\alpha }\left( W\right) \right] .
\end{equation*}
\end{definition}

\begin{lemma}
1) $\omega _{\alpha }\in EndT\left( L\right) $ and $\omega _{\alpha }\left(
\xi \right) =\xi _{\alpha }$.

2) $\left( \left( \mathcal{H}\left( \xi \right) \right) _{\alpha },\left[
\cdot ,\cdot \right] _{\alpha },J\right) $ is a complex $C^{\infty }\left(
L\right) $-Lie algebra of derivation type, where $\left( \mathcal{H}\left(
\xi \right) \right) _{\alpha }$ is the $C^{\infty }\left( L\right) $-algebra 
$\mathcal{H}\left( \xi \right) $ endowed with the derivation $\left\langle
V,f\right\rangle _{\alpha }=\omega _{\alpha }\left( V\right) \left( f\right) 
$.
\end{lemma}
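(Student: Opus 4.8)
The plan is to observe that $\omega_\alpha$ is nothing but an invertible $C^\infty(L)$-linear bundle endomorphism of $T(L)$ carrying $\xi$ onto $\xi_\alpha$; once this is established, $[\cdot,\cdot]_\alpha$ is simply the transport along $\omega_\alpha$ of the ordinary Lie bracket of vector fields tangent to $\xi_\alpha$, and every assertion in 2) becomes a formal consequence of conjugation by an isomorphism.

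For 1), I would first note that since $\alpha\in\mathcal{Z}^1(L)$ one has $\iota_X\alpha=0$, i.e. $\alpha(X)=0$; this is the only ingredient needed to check that $V\mapsto V+\alpha(V)X$ is a two-sided inverse of $\omega_\alpha$, so $\omega_\alpha\in\mathrm{End}\,T(L)$ is invertible. The inclusion $\omega_\alpha(\xi)\subseteq\xi_\alpha$ follows from the one-line computation of $(\gamma+\alpha)(\omega_\alpha V)$ for $V\in\xi$, using $\gamma(V)=0$, $\gamma(X)=1$, $\alpha(X)=0$; since $\omega_\alpha$ is invertible and both $\xi$ and $\xi_\alpha$ have corank $1$, the inclusion is an equality. (Alternatively $\gamma(\omega_\alpha^{-1}W)=(\gamma+\alpha)(W)$ gives $\omega_\alpha^{-1}(\xi_\alpha)\subseteq\xi$ directly.)

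For 2), I would argue as follows. By 1), $\omega_\alpha$ restricts to a $C^\infty(L)$-module isomorphism $\mathcal{H}(\xi)\to\mathcal{H}(\xi_\alpha)$ with inverse $\omega_\alpha^{-1}$. Integrability of $\xi_\alpha$ (part of the standing hypotheses) means $\mathcal{H}(\xi_\alpha)$ is closed under the Lie bracket, whence $[V,W]_\alpha=\omega_\alpha^{-1}[\omega_\alpha V,\omega_\alpha W]$ lies in $\mathcal{H}(\xi)$ for $V,W\in\mathcal{H}(\xi)$; and since $[\cdot,\cdot]_\alpha$ is the pullback of the Lie bracket under an isomorphism, bilinearity, antisymmetry and the Jacobi identity are inherited. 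The map $V\mapsto\langle V,\cdot\rangle_\alpha=\omega_\alpha(V)(\cdot)$ is $C^\infty(L)$-linear (and injective) and sends each $V$ to the derivation $\omega_\alpha(V)$ of $C^\infty(L)$, so $\mathcal{H}(\xi)$ is indeed a $C^\infty(L)$-module of derivations of $C^\infty(L)$. The derivation identity is then obtained by applying $\omega_\alpha^{-1}$ to the Leibniz rule $[a\,\omega_\alpha V,\omega_\alpha W]=a[\omega_\alpha V,\omega_\alpha W]-(\omega_\alpha W)(a)\,\omega_\alpha V$ and using $C^\infty(L)$-linearity of $\omega_\alpha$, giving $[aV,W]_\alpha=a[V,W]_\alpha-\langle W,a\rangle_\alpha V$. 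Finally $J$ is already a $C^\infty(L)$-linear automorphism of $\xi$ with $J^2=-I$, hence a $C^\infty(L)$-linear complex structure on $\mathcal{H}(\xi)$; since no compatibility between $J$ and $[\cdot,\cdot]_\alpha$ is required by the definition of a complex $A$-Lie algebra of derivation type, collecting the above points yields the claim.

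I do not expect a real obstacle: the whole lemma rests on recognising $\omega_\alpha$ as an isomorphism intertwining the foliations $\xi$ and $\xi_\alpha$, after which the verifications are routine transport-of-structure arguments. The only place where a little care is needed is to invoke $\iota_X\alpha=0$ both when producing the explicit inverse of $\omega_\alpha$ and when computing $(\gamma+\alpha)(\omega_\alpha V)$.
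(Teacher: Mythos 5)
Your proof is correct, and for part 1) it coincides with the paper's (the explicit inverse $V\mapsto V+\alpha(V)X$, resting on $\alpha(X)=0$ and $\gamma(X)=1$). For part 2), however, you take a genuinely different and more conceptual route. The paper proceeds by brute force: it expands $\left[\omega_{\alpha}(V),\omega_{\alpha}(W)\right]$ and $\left[\omega_{\alpha}(aV),\omega_{\alpha}(W)\right]$ completely in terms of $[V,W]$, $[V,X]$, $[X,W]$ and $\alpha$, applies $\omega_{\alpha}^{-1}$ term by term, and after cancellations reads off $[aV,W]_{\alpha}=a[V,W]_{\alpha}-\langle W,a\rangle_{\alpha}V$; as a byproduct it records the explicit expansion (\ref{[V,W]alfa}) of $[V,W]_{\alpha}$, which is reused later in the proof of Lemma \ref{Rel [ ] and [ ]alfa}. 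You instead treat everything as transport of structure along the invertible $C^{\infty}(L)$-linear map $\omega_{\alpha}$: antisymmetry and Jacobi are inherited by conjugation, and the derivation identity is obtained in one line by pushing the Leibniz rule $[a\,\omega_{\alpha}V,\omega_{\alpha}W]=a[\omega_{\alpha}V,\omega_{\alpha}W]-(\omega_{\alpha}W)(a)\,\omega_{\alpha}V$ through $\omega_{\alpha}^{-1}$. This is shorter and cleaner, and it has the additional merit of making explicit where the integrability of $\xi_{\alpha}$ is used (to guarantee $[\,\cdot\,,\cdot\,]_{\alpha}$ maps $\mathcal{H}(\xi)\times\mathcal{H}(\xi)$ into $\mathcal{H}(\xi)$), a point the paper leaves implicit; you also rightly observe that the definition imposes no compatibility between $J$ and the bracket, so nothing more is needed for the complex structure. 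The only thing your argument does not deliver is the explicit formula (\ref{[V,W]alfa}), which the paper's computation provides for later use; for the lemma itself your proof is complete.
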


\begin{proof}
It is obvious that $\omega _{\alpha }^{-1}\left( V\right) =V+\alpha \left(
V\right) X$ , $\left( \gamma +\alpha \right) \left( V-\alpha \left( V\right)
X\right) =0$ for $V\in \xi $ and $\gamma \left( V+\alpha \left( V\right)
X\right) =0$ for $V\in \ker \left( \gamma +\alpha \right) $.

Let $V,W\in \mathcal{H}\left( \xi \right) $.%
\begin{eqnarray*}
\left[ \omega _{\alpha }\left( V\right) ,\omega _{\alpha }\left( W\right) %
\right] &=&\left[ V-\alpha \left( V\right) X,W-\alpha \left( W\right) X%
\right] \\
&&\left[ V,W\right] -\left[ \alpha \left( V\right) X,W\right] +\left[ \alpha
\left( W\right) X,V\right] +\left[ \alpha \left( V\right) X,\alpha \left(
W\right) X\right] \\
&=&\left[ V,W\right] -\alpha \left( V\right) \left[ X,W\right] +W\left(
\alpha \left( V\right) \right) X+\alpha \left( W\right) \left[ X,V\right]
-V\left( \alpha \left( W\right) \right) X \\
&&+\left[ \alpha \left( V\right) X,\alpha \left( W\right) X\right]
\end{eqnarray*}%
\begin{eqnarray*}
\left[ \alpha \left( V\right) X,\alpha \left( W\right) X\right] &=&\alpha
\left( V\right) \left[ X,\alpha \left( W\right) X\right] -\left( \alpha
\left( W\right) X\right) \left( \alpha \left( V\right) \right) X \\
&=&-\alpha \left( V\right) \left[ \alpha \left( W\right) X,X\right] -\alpha
\left( W\right) \left( X\left( \alpha \left( V\right) \right) \right) X \\
&=&\alpha \left( V\right) X\left( \alpha \left( W\right) \right) X-\alpha
\left( W\right) \left( X\left( \alpha \left( V\right) \right) \right) X
\end{eqnarray*}%
So%
\begin{eqnarray*}
\left[ \omega _{\alpha }\left( V\right) ,\omega _{\alpha }\left( W\right) %
\right] &=&\left[ V,W\right] -\alpha \left( V\right) \left[ X,W\right]
+W\left( \alpha \left( V\right) \right) X+\alpha \left( W\right) \left[ X,V%
\right] -V\left( \alpha \left( W\right) \right) X \\
&&+\alpha \left( V\right) X\left( \alpha \left( W\right) \right) X-\alpha
\left( W\right) \left( X\left( \alpha \left( V\right) \right) \right) X
\end{eqnarray*}%
\begin{eqnarray*}
\left[ \omega _{\alpha }\left( aV\right) ,\omega _{\alpha }\left( W\right) %
\right] &=&a\left[ V,W\right] -W\left( a\right) V-a\alpha \left( V\right) %
\left[ X,W\right] +W\left( a\alpha \left( V\right) \right) X \\
&&-\alpha \left( W\right) \left[ aV,X\right] -aV\left( \alpha \left(
W\right) \right) X+a\alpha \left( V\right) X\left( \alpha \left( W\right)
\right) X-\alpha \left( W\right) \left( X\left( a\alpha \left( V\right)
\right) \right) X \\
&=&a\left[ V,W\right] -W\left( a\right) V-a\alpha \left( V\right) \left[ X,W%
\right] +W\left( a\right) \alpha \left( V\right) X+aW\left( \alpha \left(
V\right) \right) X \\
&&-a\alpha \left( W\right) \left[ V,X\right] +\alpha \left( W\right) X\left(
a\right) V-aV\left( \alpha \left( W\right) \right) X+a\alpha \left( V\right)
X\left( \alpha \left( W\right) \right) X \\
&&-\alpha \left( W\right) \left( X\left( a\right) \alpha \left( V\right)
\right) X-a\alpha \left( W\right) \left( X\left( \alpha \left( V\right)
\right) \right) X
\end{eqnarray*}%
\begin{eqnarray*}
\left[ aV,W\right] _{\alpha } &=&\omega _{\alpha }^{-1}\left[ \omega
_{\alpha }\left( aV\right) ,\omega _{\alpha }\left( W\right) \right] \\
&=&\left[ \omega _{\alpha }\left( aV\right) ,\omega _{\alpha }\left(
W\right) \right] +\alpha \left( \left[ \omega _{\alpha }\left( aV\right)
,\omega _{\alpha }\left( W\right) \right] \right) X
\end{eqnarray*}%
Since%
\begin{eqnarray*}
\alpha \left( \left[ \omega _{\alpha }\left( aV\right) ,\omega _{\alpha
}\left( W\right) \right] \right) &=&a\alpha \left( \left[ V,W\right] \right)
-W\left( a\right) \alpha \left( V\right) -a\alpha \left( V\right) \alpha
\left( \left[ X,W\right] \right) \\
&&-a\alpha \left( W\right) \alpha \left( \left[ V,X\right] \right) +\alpha
\left( W\right) X\left( a\right) \alpha \left( V\right)
\end{eqnarray*}%
we have%
\begin{eqnarray*}
\left[ aV,W\right] _{\alpha } &=&a\left[ V,W\right] -W\left( a\right)
V-a\alpha \left( V\right) \left[ X,W\right] +W\left( a\right) \alpha \left(
V\right) X+aW\left( \alpha \left( V\right) \right) X \\
&&-a\alpha \left( W\right) \left[ V,X\right] +\alpha \left( W\right) X\left(
a\right) V-aV\left( \alpha \left( W\right) \right) X+a\alpha \left( V\right)
X\left( \alpha \left( W\right) \right) X \\
&&-\alpha \left( W\right) \left( X\left( a\right) \alpha \left( V\right)
\right) X-a\alpha \left( W\right) \left( X\left( \alpha \left( V\right)
\right) \right) X \\
&&+a\alpha \left( \left[ V,W\right] \right) X-W\left( a\right) \alpha \left(
V\right) X-a\alpha \left( V\right) \alpha \left( \left[ X,W\right] \right) X
\\
&&-a\alpha \left( W\right) \alpha \left( \left[ V,X\right] \right) X+\alpha
\left( W\right) X\left( a\right) \alpha \left( V\right) X
\end{eqnarray*}%
For $a=1$%
\begin{eqnarray}
\left[ V,W\right] _{\alpha } &=&\left[ V,W\right] -\alpha \left( V\right) %
\left[ X,W\right] +W\left( \alpha \left( V\right) \right) X  \notag \\
&&-\alpha \left( W\right) \left[ V,X\right] -V\left( \alpha \left( W\right)
\right) X+\alpha \left( V\right) X\left( \alpha \left( W\right) \right) X 
\notag \\
&&-\alpha \left( W\right) \left( X\left( \alpha \left( V\right) \right)
\right) X  \notag \\
&&+\alpha \left( \left[ V,W\right] \right) X-\alpha \left( V\right) \alpha
\left( \left[ X,W\right] \right) X  \label{[V,W]alfa} \\
&&-\alpha \left( W\right) \alpha \left( \left[ V,X\right] \right) X  \notag
\end{eqnarray}

So%
\begin{eqnarray*}
\left[ aV,W\right] _{\alpha } &=&a\left[ V,W\right] _{\alpha }-W\left(
a\right) V+W\left( a\right) \alpha \left( V\right) X \\
&&+\alpha \left( W\right) X\left( a\right) V \\
&&-\alpha \left( W\right) \left( X\left( a\right) \alpha \left( V\right)
\right) X \\
&&-W\left( a\right) \alpha \left( V\right) X \\
&&+\alpha \left( W\right) X\left( a\right) \alpha \left( V\right) X \\
&=&a\left[ V,W\right] _{\alpha }-W\left( a\right) V+\alpha \left( W\right)
X\left( a\right) V \\
&=&a\left[ V,W\right] _{\alpha }-\left( W-\alpha \left( W\right) X\right)
\left( a\right) V \\
&=&a\left[ V,W\right] _{\alpha }-\omega _{\alpha }\left( W\right) \left(
a\right) V=a\left[ V,W\right] _{\alpha }-\left\langle W,a\right\rangle
_{\alpha }V.
\end{eqnarray*}
\end{proof}

\section{The $\left( 0,1\right) $-form associated to a DGLA defining couple}

\begin{notation}
i) $\overline{\partial }=\overline{\partial _{J}}:\Lambda _{J}^{0,p}\left(
\xi \right) \otimes \xi \rightarrow \Lambda _{J}^{0,p+1}\left( \xi \right)
\otimes \xi $ is the extension by linearity of%
\begin{equation*}
\overline{\partial }\left( \alpha \otimes Z\right) =\overline{\partial }%
\alpha \otimes Z+\left( -1\right) ^{p}\alpha \wedge \overline{\partial }Z;
\end{equation*}%
ii) Let $\alpha \in \Lambda ^{1}\left( \xi \right) $,$\ \beta \in \Lambda
^{1}\left( \xi \right) \otimes \xi $ and $V\in \xi $. Then we denote $\alpha
^{0,1}\in \Lambda _{J}^{0,1}\left( \xi \right) $ and $\beta ^{0,1}\in
\Lambda _{J}^{0,1}\left( \xi \right) \otimes \xi $ the forms defined by%
\begin{equation*}
\alpha ^{0,1}\left( V\right) =\frac{1}{2}\left( \alpha \left( V\right)
+i\alpha \left( JV\right) \right) ,\ \ \beta ^{0,1}\left( V\right) =\frac{1}{%
2}\left( \beta \left( V\right) +J\beta \left( JV\right) \right) ;
\end{equation*}%
iii) Let $\alpha \in \Lambda _{J}^{0,1}\left( \xi \right) $, $V,W\in \xi $.
Then $\overline{\partial }a\in \Lambda _{J}^{0,2}\left( \xi \right) $ is
defined by $\overline{\partial }\alpha \left( V,W\right) =\frac{1}{4}d\alpha
^{\mathbb{C}}\left( V+iJV,W+iJW\right) $ where $d\alpha ^{\mathbb{C}}\in
\Lambda _{J}^{2}\xi \otimes \mathbb{C}$ is the complexification of $d\alpha $%
.
\end{notation}

\begin{lemma}
\label{dbar 1 form}Let $\omega \in \Lambda _{J}^{0,1}\left( \xi \right)
\otimes \xi $, $V,W\in \xi $. Then%
\begin{equation*}
\overline{\partial }\omega \left( V,W\right) =\left( \overline{\partial }%
\left( \omega \left( W\right) \right) \left( V\right) -\overline{\partial }%
\left( \omega \left( V\right) \right) \left( W\right) \right) -\frac{1}{2}%
\omega \left( \left[ V,W\right] -\left[ JV,JW\right] \right) .
\end{equation*}
\end{lemma}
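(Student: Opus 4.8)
The plan is to verify the formula on decomposable elements and conclude by additivity and locality. Both sides are additive in $\omega $, and locally, after complexifying, $\omega $ is a finite sum of terms $\alpha \otimes Z$ with $\alpha \in \Lambda _{J}^{0,1}\left( \xi \right) $ and $Z$ a vector field tangent to $\xi $ of type $\left( 1,0\right) $ (so that $JZ=iZ$); hence it suffices to treat $\omega =\alpha \otimes Z$. For such $\omega $, the definition of $\overline{\partial }$ on decomposable elements gives $\overline{\partial }\omega =\overline{\partial }\alpha \otimes Z-\alpha \wedge \overline{\partial }Z$, so
\[
\overline{\partial }\omega \left( V,W\right) =\overline{\partial }\alpha \left( V,W\right) Z-\alpha \left( V\right) \overline{\partial }Z\left( W\right) +\alpha \left( W\right) \overline{\partial }Z\left( V\right) .
\]
On the other side, $\omega \left( W\right) =\alpha \left( W\right) Z$ is a function times a section of $\mathcal{H}\left( \xi \right) $, so the Leibniz rule $\overline{\partial }\left( aW\right) \left( V\right) =\left( \overline{\partial }a\right) \left( V\right) W+a\left( \overline{\partial }W\right) \left( V\right) $ for the $\overline{\partial }$-operator of a complex Lie algebra of derivation type yields
\[
\overline{\partial }\left( \omega \left( W\right) \right) \left( V\right) =\tfrac{1}{2}\left( V\left( \alpha \left( W\right) \right) +iJV\left( \alpha \left( W\right) \right) \right) Z+\alpha \left( W\right) \overline{\partial }Z\left( V\right) ,
\]
and symmetrically for $\overline{\partial }\left( \omega \left( V\right) \right) \left( W\right) $.

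Substituting these expansions, the terms involving $\overline{\partial }Z$ agree on the two sides: the contribution $-\alpha \left( V\right) \overline{\partial }Z\left( W\right) +\alpha \left( W\right) \overline{\partial }Z\left( V\right) $ of the left-hand side is exactly what the Leibniz terms of the right-hand side produce. Thus, after cancelling the common factor $Z$, the lemma reduces to the scalar identity
\[
\overline{\partial }\alpha \left( V,W\right) =\tfrac{1}{2}\left( V\left( \alpha \left( W\right) \right) +iJV\left( \alpha \left( W\right) \right) \right) -\tfrac{1}{2}\left( W\left( \alpha \left( V\right) \right) +iJW\left( \alpha \left( V\right) \right) \right) -\tfrac{1}{2}\alpha \left( \left[ V,W\right] -\left[ JV,JW\right] \right) .
\]

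To prove this I would start from the defining formula $\overline{\partial }\alpha \left( V,W\right) =\tfrac{1}{4}d\alpha ^{\mathbb{C}}\left( V+iJV,W+iJW\right) $, expand by $\mathbb{C}$-bilinearity into the four terms $d\alpha \left( V,W\right) $, $d\alpha \left( JV,JW\right) $, $d\alpha \left( V,JW\right) $, $d\alpha \left( JV,W\right) $, and apply Cartan's formula $d\alpha \left( A,B\right) =A\left( \alpha \left( B\right) \right) -B\left( \alpha \left( A\right) \right) -\alpha \left( \left[ A,B\right] \right) $ to each. Using the type condition $\alpha \left( JU\right) =-i\alpha \left( U\right) $, the differential terms collapse precisely to the first two terms on the right-hand side above. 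The surviving bracket terms add up to $\tfrac{1}{4}\bigl( -\alpha \left( \left[ V,W\right] \right) +\alpha \left( \left[ JV,JW\right] \right) -i\alpha \left( \left[ V,JW\right] \right) -i\alpha \left( \left[ JV,W\right] \right) \bigr) $, and the proof is finished by identifying this with $-\tfrac{1}{2}\alpha \left( \left[ V,W\right] -\left[ JV,JW\right] \right) $. This is the one place where the Levi flat hypothesis intervenes: since $J$ is leafwise integrable, the Nijenhuis tensor of $\left( \mathcal{H}\left( \xi \right) ,\left[ \cdot ,\cdot \right] ,J\right) $ vanishes, whence $\left[ V,W\right] -\left[ JV,JW\right] =-J\left( \left[ V,JW\right] +\left[ JV,W\right] \right) $; applying $\alpha $ and using $\alpha \left( JU\right) =-i\alpha \left( U\right) $ once more gives $\alpha \left( \left[ V,W\right] -\left[ JV,JW\right] \right) =i\alpha \left( \left[ V,JW\right] +\left[ JV,W\right] \right) $, which is exactly the relation required.

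The whole argument is elementary; the one genuinely essential point — and the step I would watch most carefully — is the bracket bookkeeping at the end, namely that the combination of Lie brackets generated by Cartan's formula reconciles with the single term $-\tfrac{1}{2}\alpha \left( \left[ V,W\right] -\left[ JV,JW\right] \right) $ only after invoking both the vanishing of the Nijenhuis tensor and the $\left( 0,1\right) $-type of $\alpha $. Beyond that, one only has to carry the factor $\tfrac{1}{4}$ in the definition of $\overline{\partial }\alpha $ and the signs in $\alpha \left( JU\right) =-i\alpha \left( U\right) $ consistently through the computation.
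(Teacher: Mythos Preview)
Your proof is correct and follows the same overall strategy as the paper: reduce to decomposable tensors $\alpha\otimes Z$, expand both sides via the Leibniz rule and Cartan's formula, and close the computation with the vanishing of the Nijenhuis tensor. The main organizational difference is that the paper keeps $Z$ real and treats the $i$ in $\alpha^{0,1}$ as $J$ acting on $Z$, which produces a longer expansion and forces the integrability to be used in the form $N_J(V,Z)=N_J(W,Z)=0$; by contrast, your choice of a $(1,0)$ vector $Z$ turns $JZ=iZ$ into a scalar relation, collapses the bookkeeping, and shifts the single use of integrability to $N_J(V,W)=0$. One small point worth making explicit in your write-up is the justification of the complexification step: both sides of the identity are $\mathbb{R}$-linear in $\omega$ and extend $\mathbb{C}$-linearly to $\omega^{\mathbb{C}}\in\Lambda_J^{0,1}(\xi)\otimes\xi^{\mathbb{C}}$, and the antilinearity condition $\omega(JV)=-J\omega(V)$ forces the complexified $\omega$ to lie in $\Lambda_J^{0,1}(\xi)\otimes\xi^{1,0}$ plus its conjugate, so it indeed suffices to check the formula on $\alpha\otimes Z$ with $Z$ of type $(1,0)$.
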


\begin{proof}
It is enough to prove the Lemma for $\omega =\alpha ^{0,1}\otimes Z$, where $%
\alpha \in \Lambda _{J}^{1}\left( \xi \right) $ and $Z\in \xi $.

Then%
\begin{eqnarray}
\overline{\partial }\left( \alpha ^{0,1}\otimes Z\right) \left( V,W\right)
&=&\left( \overline{\partial }\alpha ^{0,1}\otimes Z-\alpha ^{0,1}\wedge 
\overline{\partial }Z\right) \left( V,W\right)  \notag \\
&=&\frac{1}{4}d\alpha ^{\mathbb{C}}\left( V+iJV,W+iJW\right) Z-\alpha
^{0,1}\left( V\right) \left( \overline{\partial }Z\right) \left( W\right)
+\alpha ^{0,1}\left( W\right) \left( \overline{\partial }Z\right) \left(
V\right)  \label{dbar 1 form A}
\end{eqnarray}%
But%
\begin{eqnarray*}
d\alpha ^{\mathbb{C}}\left( V+iJV,W+iJW\right) &=&\left( V+iJV\right) \left(
\alpha ^{\mathbb{C}}\left( W+iJW\right) \right) -\left( W+iJW\right) \left(
\alpha ^{\mathbb{C}}\left( V+iJV\right) \right) -\alpha ^{\mathbb{C}}\left[
V+iJV,W+iJW\right] \\
&=&\left( V+iJV\right) \left( \alpha \left( W\right) +i\alpha \left(
JW\right) \right) -\left( W+iJW\right) \left( \alpha \left( V\right)
+i\alpha \left( JV\right) \right) \\
&&-\alpha ^{\mathbb{C}}\left( \left[ V,W\right] -\left[ JV,JW\right] +i\left[
JV,W\right] +\left[ V,JW\right] \right) \\
&=&V\left( \alpha \left( W\right) \right) -\left( JV\right) \alpha \left(
JW\right) +i\left( \left( JV\right) \left( \alpha \left( W\right) \right)
+V\left( \alpha \left( JW\right) \right) \right) \\
&&-\left( W\left( \alpha \left( V\right) \right) -\left( JW\right) \alpha
\left( JV\right) +i\left( \left( JW\right) \left( \alpha \left( V\right)
\right) +W\left( \alpha \left( JV\right) \right) \right) \right) \\
&&-\alpha \left( \left[ V,W\right] -\left[ JV,JW\right] \right) +i\alpha
\left( \left[ JV,W\right] +\left[ V,JW\right] \right)
\end{eqnarray*}%
and so%
\begin{eqnarray}
d\alpha ^{\mathbb{C}}\left( V+iJV,W+iJW\right) Z &=&V\left( \alpha \left(
W\right) \right) -\left( JV\right) \alpha \left( JW\right) Z+\left(
JV\right) \left( \alpha \left( W\right) \right) +V\left( \alpha \left(
JW\right) \right) JZ  \notag \\
&&-\left( \left( W\left( \alpha \left( V\right) \right) -\left( JW\right)
\alpha \left( JV\right) \right) Z+\left( JW\right) \left( \alpha \left(
V\right) \right) +W\left( \alpha \left( JV\right) \right) JZ\right)
\label{dbar 1 form B} \\
&&-\left( \alpha \left( \left[ V,W\right] -\left[ JV,JW\right] \right)
\right) Z+\alpha \left( \left[ JV,W\right] +\left[ V,JW\right] \right) JZ. 
\notag
\end{eqnarray}

We have also%
\begin{eqnarray}
\alpha ^{0,1}\left( V\right) \left( \overline{\partial }Z\right) \left(
W\right) &=&\frac{1}{4}\left( \alpha \left( V\right) +i\alpha \left(
JV\right) \right) \left( \left[ W,Z\right] +J\left[ JW,Z\right] \right) 
\notag \\
&=&\frac{1}{4}\left( \alpha \left( V\right) \left[ W,Z\right] +\alpha \left(
V\right) J\left[ JW,Z\right] +\alpha \left( JV\right) J\left[ W,Z\right]
-\alpha \left( JV\right) \left[ JW,Z\right] \right) ,  \label{dbar 1 form C}
\end{eqnarray}%
\begin{eqnarray}
\alpha ^{0,1}\left( W\right) \left( \overline{\partial }Z\right) \left(
V\right) &=&\frac{1}{4}\left( \alpha \left( W\right) +i\alpha \left(
JW\right) \right) \left( \left[ V,Z\right] +J\left[ JV,Z\right] \right) 
\notag \\
&=&\frac{1}{4}\left( \alpha \left( W\right) \left[ V,Z\right] +\alpha \left(
W\right) J\left[ JV,Z\right] +\alpha \left( JW\right) J\left[ V,Z\right]
-\alpha \left( JW\right) \left[ JV,Z\right] \right)  \label{dbar 1 form D}
\end{eqnarray}%
so from (\ref{dbar 1 form A}), (\ref{dbar 1 form B}), (\ref{dbar 1 form C})
and (\ref{dbar 1 form D}) it follows that%
\begin{eqnarray}
\overline{\partial }\left( \alpha ^{0,1}\otimes Z\right) \left( V,W\right)
&=&\frac{1}{4}\left( V\left( \alpha \left( W\right) \right) -\left(
JV\right) \alpha \left( JW\right) Z+\left( JV\right) \left( \alpha \left(
W\right) \right) +V\left( \alpha \left( JW\right) \right) JZ\right)  \notag
\\
&&-\frac{1}{4}\left( \left( W\left( \alpha \left( V\right) \right) -\left(
JW\right) \alpha \left( JV\right) \right) Z+\left( JW\right) \left( \alpha
\left( V\right) \right) +W\left( \alpha \left( JV\right) \right) JZ\right) 
\notag \\
&&-\frac{1}{4}\left( \alpha \left( \left[ V,W\right] -\left[ JV,JW\right]
\right) \right) Z+\alpha \left( \left[ JV,W\right] +\left[ V,JW\right]
\right) JZ  \notag \\
&&-\frac{1}{4}\left( \alpha \left( V\right) +i\alpha \left( JV\right)
\right) \left( \left[ W,Z\right] +J\left[ JW,Z\right] \right)
\label{dbar 1 form E} \\
&&+\frac{1}{4}\left( \alpha \left( W\right) +i\alpha \left( JW\right)
\right) \left( \left[ V,Z\right] +J\left[ JV,Z\right] \right) .  \notag
\end{eqnarray}%
Since%
\begin{eqnarray}
\overline{\partial }\left( \left( \alpha ^{0,1}\otimes Z\right) \left(
W\right) \right) \left( V\right) &=&\overline{\partial }\left( \alpha
^{0,1}\left( W\right) Z\right) \left( V\right) =\frac{1}{2}\left( \left[
V,\alpha ^{0,1}\left( W\right) Z\right] +J\left[ JV,\alpha ^{0,1}\left(
W\right) Z\right] \right)  \notag \\
&=&\frac{1}{4}\left( \left[ V,\left( \alpha \left( W\right) +i\alpha \left(
JW\right) \right) Z\right] +J\left[ JV,\left( \alpha \left( W\right)
+i\alpha \left( JW\right) \right) Z\right] \right)  \notag \\
&=&\frac{1}{4}\left( \left[ V,\alpha \left( W\right) Z+\alpha \left(
JW\right) JZ\right] +J\left[ JV,\alpha \left( W\right) Z+\alpha \left(
JW\right) JZ\right] \right)  \notag \\
&=&\frac{1}{4}\left( \alpha \left( W\right) \left[ V,Z\right] +\alpha \left(
JW\right) \left[ V,JZ\right] +\alpha \left( W\right) J\left[ JV,Z\right]
+\alpha \left( JW\right) J\left[ JV,JZ\right] \right)  \notag \\
&&+\frac{1}{4}\left( V\left( \alpha \left( W\right) \right) \right)
Z+V\left( \alpha \left( JW\right) JZ+\left( JV\right) \left( \alpha \left(
W\right) \right) JZ-\left( JV\right) \left( \alpha \left( JW\right) \right)
Z\right)  \label{dbar 1 form F}
\end{eqnarray}%
and similarly 
\begin{eqnarray}
\overline{\partial }\left( \left( \alpha ^{0,1}\otimes Z\right) \left(
W\right) \right) \left( V\right) &=&\frac{1}{4}\left( \alpha \left( V\right) %
\left[ W,Z\right] +\alpha \left( JV\right) \left[ W,JZ\right] +\alpha \left(
V\right) J\left[ JW,Z\right] +\alpha \left( JV\right) J\left[ JW,JZ\right]
\right)  \notag \\
&&+\frac{1}{4}\left( W\left( \alpha \left( V\right) \right) \right)
Z+W\left( \alpha \left( JV\right) JZ+\left( JW\right) \left( \alpha \left(
V\right) \right) JZ-\left( JW\right) \left( \alpha \left( JV\right) \right)
Z\right)  \label{dbar 1 form G}
\end{eqnarray}%
from (\ref{dbar 1 form E}), (\ref{dbar 1 form F}) and (\ref{dbar 1 form G})
we obtain%
\begin{eqnarray*}
&&4\left( \overline{\partial }\left( \alpha ^{0,1}\otimes Z\right) \left(
V,W\right) -4\left( \overline{\partial }\left( \left( \alpha ^{0,1}\otimes
Z\right) \left( W\right) \right) \left( V\right) -\overline{\partial }\left(
\left( \alpha ^{0,1}\otimes Z\right) \left( V\right) \right) \left( W\right)
\right) \right) \\
&=&\underset{1}{V\left( \alpha \left( W\right) \right) }-\underset{2}{\left(
JV\right) \left( \alpha \left( JW\right) \right) }Z+\underset{3}{\left(
JV\right) \left( \alpha \left( W\right) \right) }+\underset{4}{V\left(
\alpha \left( JW\right) \right) JZ} \\
&&-\left( \left( \underset{5}{W\left( \alpha \left( V\right) \right) }-%
\underset{6}{\left( JW\right) \alpha \left( JV\right) }\right) Z+\underset{7}%
{\left( JW\right) \left( \alpha \left( V\right) \right) }+\underset{8}{%
W\left( \alpha \left( JV\right) \right) JZ}\right) \\
&&-\left( \alpha \left( \left[ V,W\right] -\left[ JV,JW\right] \right)
\right) Z+\alpha \left( \left[ JV,W\right] +\left[ V,JW\right] \right) JZ \\
&&-\left( \underset{9}{\alpha \left( V\right) \left[ W,Z\right] }+\underset{%
10}{\alpha \left( V\right) J\left[ JW,Z\right] }+\alpha \left( JV\right) J%
\left[ W,Z\right] -\alpha \left( JV\right) \left[ JW,Z\right] \right) \\
&&+\underset{13}{\alpha \left( W\right) \left[ V,Z\right] }+\underset{14}{%
\alpha \left( W\right) J\left[ JV,Z\right] }+\alpha \left( JW\right) J\left[
V,Z\right] -\alpha \left( JW\right) \left[ JV,Z\right] \\
&&-\left( \underset{13}{\alpha \left( W\right) \left[ V,Z\right] }+\alpha
\left( JW\right) \left[ V,JZ\right] +\underset{14}{\alpha \left( W\right) J%
\left[ JV,Z\right] }+\alpha \left( JW\right) J\left[ JV,JZ\right] \right) \\
&&-\underset{1}{\left( V\left( \alpha \left( W\right) \right) \right) Z}+%
\underset{4}{V\left( \alpha \left( JW\right) \right) JZ}+\underset{3}{\left(
JV\right) \left( \alpha \left( W\right) \right) JZ}-\underset{2}{\left(
JV\right) \left( \alpha \left( JW\right) \right) Z} \\
&&+\underset{9}{\alpha \left( V\right) \left[ W,Z\right] }+\alpha \left(
JV\right) \left[ W,JZ\right] +\underset{10}{\alpha \left( V\right) J\left[
JW,Z\right] }+\alpha \left( JV\right) J\left[ JW,JZ\right] \\
&&+\underset{5}{\left( W\left( \alpha \left( V\right) \right) \right) Z}+%
\underset{8}{W\left( \alpha \left( JV\right) \right) JZ}+\underset{7}{\left(
JW\right) \left( \alpha \left( V\right) \right) JZ}-\underset{6}{\left(
JW\right) \left( \alpha \left( JV\right) \right) Z}.
\end{eqnarray*}%
By reducing the terms having the same index, this last formula becomes%
\begin{eqnarray}
&&4\left( \overline{\partial }\left( \alpha ^{0,1}\otimes Z\right) \left(
V,W\right) -4\left( \overline{\partial }\left( \left( \alpha ^{0,1}\otimes
Z\right) \left( W\right) \right) \left( V\right) -\overline{\partial }\left(
\left( \alpha ^{0,1}\otimes Z\right) \left( V\right) \right) \left( W\right)
\right) \right)  \notag \\
&=&-\left( \alpha \left( \left[ V,W\right] -\left[ JV,JW\right] \right)
\right) Z+\alpha \left( \left[ JV,W\right] +\left[ V,JW\right] \right) JZ 
\notag \\
&&+\alpha \left( JV\right) \left( -J\left[ W,Z\right] +\left[ JW,Z\right] +%
\left[ W,JZ\right] +J\left[ JW,JZ\right] \right)  \notag \\
&&+\alpha \left( JW\right) \left( J\left[ V,Z\right] -\left[ JV,Z\right] -%
\left[ V,JZ\right] -J\left[ JV,JZ\right] \right)  \notag \\
&&  \label{dbar 1 form H}
\end{eqnarray}%
Since $N_{J}\left( V,Z\right) =N_{J}\left( W,Z\right) =0$, we have%
\begin{equation*}
J\left[ JW,JZ\right] -J\left[ W,Z\right] +\left[ JW,Z\right] +\left[ W,JZ%
\right] =0
\end{equation*}%
\begin{equation*}
J\left[ JV,JZ\right] -J\left[ V,Z\right] +\left[ V,JZ\right] +\left[ JV,Z%
\right] =0
\end{equation*}%
and from (\ref{dbar 1 form H}) it follows that.%
\begin{eqnarray}
&&4\left( \overline{\partial }\left( \alpha ^{0,1}\otimes Z\right) \left(
V,W\right) -4\left( \overline{\partial }\left( \left( \alpha ^{0,1}\otimes
Z\right) \left( W\right) \right) \left( V\right) -\overline{\partial }\left(
\left( \alpha ^{0,1}\otimes Z\right) \left( V\right) \right) \left( W\right)
\right) \right)  \notag \\
&=&-\left( \alpha \left( \left[ V,W\right] -\left[ JV,JW\right] \right)
\right) Z+\alpha \left( \left[ JV,W\right] +\left[ V,JW\right] \right) JZ. 
\notag \\
&&  \label{dbar 1 form I}
\end{eqnarray}%
But%
\begin{eqnarray*}
2\left( \alpha ^{0,1}\otimes Z\right) \left( \left[ V,W\right] -\left[ JV,JW%
\right] \right) &=&\left( \alpha \left( \left[ V,W\right] -\left[ JV,JW%
\right] \right) \right) +i\alpha \left( J\left( \left[ V,W\right] -\left[
JV,JW\right] \right) \right) Z \\
&=&\left( \left( \alpha \left( \left[ V,W\right] -\left[ JV,JW\right]
\right) Z+\alpha \left( J\left( \left[ V,W\right] -\left[ JV,JW\right]
\right) \right) \right) JZ\right)
\end{eqnarray*}%
and by (\ref{dbar 1 form I}) the Lemma follows.\nolinebreak
\end{proof}

\begin{definition}
\label{TY and HY}Let $Y$ a vector field on $L$. Define $T_{Y}=T_{Y,\gamma
,X}\in Hom_{\mathbb{C}}\left( \xi ,\xi \right) $ and $H_{Y}=H_{Y,J,\gamma
,X}\in \Lambda _{J}^{0,1}\left( \xi \right) \otimes \xi $ by%
\begin{equation}
T_{Y}\left( V\right) =\left[ V,Y\right] -\gamma \left( \left[ V,Y\right]
\right) X,  \label{TY}
\end{equation}%
\begin{equation}
H_{Y}\left( V\right) =\frac{1}{2}\left( T_{Y}\left( V\right) +JT_{Y}\left(
JV\right) \right) =\frac{1}{2}\left( \left[ V,Y\right] -\gamma \left( \left[
V,Y\right] \right) X+J\left( \left[ JV,Y\right] -\gamma \left( \left[ JV,Y%
\right] \right) X\right) \right) .  \label{HY1}
\end{equation}
\end{definition}

\begin{remark}
\label{Rem H}i)$H_{Y}$ is $C^{\infty }\left( L\right) $-linear. Indeed, let $%
a\in C^{\infty }\left( L\right) $. Then%
\begin{equation*}
T_{Y}\left( aV\right) =\left[ aV,Y\right] -\gamma \left( \left[ aV,Y\right]
\right) X=a\left[ V,Y\right] -Y\left( a\right) V-\gamma \left( a\left[ V,Y%
\right] -Y\left( a\right) V\right) X=aT\left( V\right) -Y\left( a\right) V
\end{equation*}%
and%
\begin{equation*}
H_{Y}\left( aV\right) =\frac{1}{2}\left( T_{Y}\left( aV\right) +JT_{Y}\left(
JaV\right) \right) =\frac{1}{2}\left( aT_{Y}\left( V\right) -Y\left(
a\right) V+aJT_{Y}\left( JV\right) -J\left( Y\left( a\right) JV\right)
\right) =aH_{Y}\left( V\right) .
\end{equation*}

ii) $H_{V}=\overline{\partial }V$ for $V\in \xi $.
\end{remark}

\begin{definition}
\label{TX and HX} In the particular case $Y=X$ of Definition \ref{TY and HY}%
, we will note $T=T_{X}=T_{\gamma ,X}$ and $H=H_{X}=H_{J,\gamma ,X}$. $%
T_{\gamma ,X}$ will be called the endomorphism associated to the DGLA
defining couple $\left( \gamma ,X\right) $ and $H_{J,\gamma ,X}$ will be
called the $\left( 0,1\right) $-form associated to the DGLA defining couple $%
\left( \gamma ,X\right) $.
\end{definition}

\begin{remark}
Let $V\in \xi $. We have%
\begin{equation}
H\left( V\right) =\frac{1}{2}\left( \left[ V,X\right] -\gamma \left( \left[
V,X\right] \right) X+J\left( \left[ JV,X\right] -\gamma \left( \left[ JV,X%
\right] \right) X\right) \right) .  \label{H0}
\end{equation}%
Since 
\begin{equation}
\iota _{X}d\gamma \left( V\right) =d\gamma \left( X,V\right) =X\left( \gamma
\left( V\right) \right) -V\left( \gamma \left( X\right) \right) -\gamma
\left( \left[ X,V\right] \right) =\gamma \left( \left[ V,X\right] \right)
\label{iXdgama(V)}
\end{equation}%
it follows that%
\begin{equation}
H\left( V\right) =\frac{1}{2}\left( \left[ V,X\right] +J\left[ JV,X\right]
\right) +\left( \iota _{X}d\gamma \right) ^{0,1}\left( V\right) X  \label{H}
\end{equation}
\end{remark}

\begin{lemma}
\label{dbar H}%
\begin{equation*}
\overline{\partial }H=\left( \iota _{X}d\gamma \right) ^{0,1}\wedge H
\end{equation*}%
where $H$ is the $\left( 0,1\right) $-form associated to the DGLA defining
couple $\left( \gamma ,X\right) $.
\end{lemma}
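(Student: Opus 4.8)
The plan is to test the claimed identity on arbitrary $V,W\in\xi$ and to reduce everything to Lie brackets of vector fields. Applying Lemma~\ref{dbar 1 form} to $\omega=H\in\Lambda_J^{0,1}\left(\xi\right)\otimes\xi$ gives
\[
\overline{\partial}H\left(V,W\right)=\overline{\partial}\left(H\left(W\right)\right)\left(V\right)-\overline{\partial}\left(H\left(V\right)\right)\left(W\right)-\tfrac{1}{2}H\left(\left[V,W\right]-\left[JV,JW\right]\right),
\]
so only the first two terms have to be expanded. Since the Levi flat structure makes $J$ an integrable complex structure on each leaf, the Nijenhuis tensor of the complex Lie algebra of derivation type $\left(\xi,\left[\cdot,\cdot\right],J\right)$ vanishes, and hence $\overline{\partial}$ on sections of $\xi$ reduces to $\left(\overline{\partial}Z\right)\left(V\right)=\tfrac{1}{2}\left(\left[V,Z\right]+J\left[JV,Z\right]\right)$. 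I would apply this to the sections $H\left(W\right),H\left(V\right)\in\xi$ and then substitute $2H\left(W\right)=T\left(W\right)+JT\left(JW\right)$, with $T\left(W\right)=\left[W,X\right]-\gamma\left(\left[W,X\right]\right)X$ from Definitions~\ref{TY and HY} and~\ref{TX and HX} (equivalently (\ref{H0}) and (\ref{H})), which turns the right hand side into a long combination of iterated brackets.

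A convenient way to keep this under control is to first record a ``curvature identity'' for $T=T_{\gamma ,X}$: using the Jacobi identity $\left[\left[V,W\right],X\right]=\left[V,\left[W,X\right]\right]-\left[W,\left[V,X\right]\right]$, the Leibniz rule, and $\iota _{X}d\gamma \left( V\right) =\gamma \left( \left[ V,X\right] \right)$ (equation (\ref{iXdgama(V)})), one obtains for $V,W\in\xi$
\[
\left[V,T\left(W\right)\right]-\left[W,T\left(V\right)\right]=T\left(\left[V,W\right]\right)+\iota _{X}d\gamma \left(V\right)T\left(W\right)-\iota _{X}d\gamma \left(W\right)T\left(V\right),
\]
together with the analogous identities obtained by replacing $V,W$ by $JV,JW$ or $T(W)$ by $T(JW)$. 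Feeding these into the expansion of $\overline{\partial}\left(H\left(W\right)\right)\left(V\right)=\tfrac{1}{2}\left(\left[V,H\left(W\right)\right]+J\left[JV,H\left(W\right)\right]\right)$ (after inserting $2H\left(W\right)=T\left(W\right)+JT\left(JW\right)$), antisymmetrising in $V,W$, and using $N_{J}=0$ on $\xi$ repeatedly in the form $\left[JA,JB\right]=\left[A,B\right]+J\left[JA,B\right]+J\left[A,JB\right]$ to collapse all mixed brackets, the purely bracket contribution recombines with $-\tfrac{1}{2}H\left(\left[V,W\right]-\left[JV,JW\right]\right)$ and cancels. Of the remaining terms, the derivatives of the scalars $\gamma \left(\left[\cdot,X\right]\right)$ are killed by Lemma~\ref{i(X) d(gama ) db closed}, which gives $V\left(\iota _{X}d\gamma \left(W\right)\right)-W\left(\iota _{X}d\gamma \left(V\right)\right)=\iota _{X}d\gamma \left(\left[V,W\right]\right)$ on $\xi$ (and any surviving $d\gamma $-term is handled by Lemma~\ref{Frobenius}), while the terms carrying a factor $\iota _{X}d\gamma \left(V\right)$ or $\iota _{X}d\gamma \left(JV\right)$ assemble, with the help of $JH\left(W\right)=-H\left(JW\right)$, into $\left(\iota _{X}d\gamma\right)^{0,1}\left(V\right)H\left(W\right)-\left(\iota _{X}d\gamma \right)^{0,1}\left(W\right)H\left(V\right)=\bigl(\left(\iota _{X}d\gamma \right)^{0,1}\wedge H\bigr)\left(V,W\right)$, which is the assertion.

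The difficulty I anticipate is organizational rather than conceptual: the expansion produces a large number of bracket terms, and the essential cancellations emerge only after invoking $N_{J}=0$ on several different pairs of arguments and using Jacobi to open the double brackets, so the delicate point is bookkeeping --- in particular keeping track of which brackets stay in $\xi$ and which acquire an $X$-component, the latter being exactly the terms governed by $\iota _{X}d\gamma$. As in the proof of Lemma~\ref{dbar 1 form}, a systematic indexing of the terms is the practical way to carry this through.
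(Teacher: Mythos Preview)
Your plan is correct and follows essentially the same route as the paper: apply Lemma~\ref{dbar 1 form}, expand via $2H=T+JT\circ J$, open the double brackets with Jacobi, collapse mixed terms using $N_J=0$, eliminate the scalar derivatives via Lemma~\ref{i(X) d(gama ) db closed}, and then identify the remainder with the expansion of $\left(\iota_X d\gamma\right)^{0,1}\wedge H$. The one organizational difference is that you package part of the computation into a ``curvature identity'' for $T$; note however that this identity, as you stated it, already needs Lemma~\ref{i(X) d(gama ) db closed} to dispose of the $X$-component $\bigl(\gamma([[V,W],X])-V(\gamma([W,X]))+W(\gamma([V,X]))\bigr)X$, so that lemma enters one step earlier than your write-up suggests---the paper simply does the same bookkeeping explicitly term by term without naming the intermediate identity.
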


\begin{proof}
Let $V,W\in \mathbb{\xi }$. By Lemma \ref{dbar 1 form} we have 
\begin{eqnarray*}
2\overline{\partial }H\left( V,W\right) &=&2\left( \overline{\partial }%
\left( HW\right) \left( V\right) -\overline{\partial }\left( HV\right)
\left( W\right) \right) -H\left( \left[ V,W\right] -\left[ JV,JW\right]
\right) \\
&=&\left[ V,HW\right] +J\left[ JV,HW\right] -\left( \left[ W,HV\right] +J%
\left[ JW,HV\right] \right) -H\left( \left[ V,W\right] -\left[ JV,JW\right]
\right) .
\end{eqnarray*}%
By replacing $HV$, $HW$, $H\left( \left[ V,W\right] \right) $ and $H\left[
JV,JW\right] $ from the definition of $H$ we obtain that $2\overline{%
\partial }H\left( V,W\right) $ is the sum of the following $12$ terms: 
\begin{eqnarray}
4\overline{\partial }H\left( V,W\right) &=&\underset{\left( 1\right) }{\left[
V,\left[ W,X\right] -\gamma \left( \left[ W,X\right] \right) X\right] }+%
\underset{\left( 2\right) }{\left[ V,J\left( \left[ JW,X\right] -\gamma
\left( \left[ JW,X\right] \right) X\right) \right] }  \notag \\
&&+\underset{\left( 3\right) }{J\left( \left[ JV,\left[ W,X\right] -\gamma
\left( \left[ W,X\right] \right) X\right] \right) }+\underset{\left(
4\right) }{J\left[ JV,J\left( \left[ JW,X\right] -\gamma \left( \left[ JW,X%
\right] X\right) \right) \right] }  \notag \\
&&-\underset{\left( 5\right) }{\left[ W,\left[ V,X\right] -\gamma \left( %
\left[ V,X\right] \right) X\right] }-\underset{\left( 6\right) }{\left[
W,J\left( \left[ JV,X\right] -\gamma \left( \left[ JV,X\right] \right)
X\right) \right] }  \notag \\
&&-\underset{\left( 7\right) }{J\left( \left[ JW,\left[ V,X\right] -\gamma
\left( \left[ V,X\right] \right) X\right] \right) }-\underset{\left(
8\right) }{J\left[ JW,J\left( \left[ JV,X\right] -\gamma \left( \left[ JV,X%
\right] \right) X\right) \right] }  \notag \\
&&-\underset{\left( 9\right) }{\left[ \left[ V,W\right] ,X\right] -\gamma
\left( \left[ \left[ V,W\right] ,X\right] \right) X}-\underset{\left(
10\right) }{J\left( \left[ J\left[ V,W\right] ,X\right] -\gamma \left( \left[
J\left[ V,W\right] ,X\right] \right) X\right) }  \notag \\
&&+\underset{\left( 11\right) }{\left[ \left[ JV,JW\right] ,X\right] -\gamma
\left( \left[ \left[ JV,JW\right] ,X\right] \right) X}+\underset{\left(
12\right) }{J\left( \left[ J\left[ JV,JW\right] ,X\right] -\gamma \left( %
\left[ J\left[ JV,JW\right] ,X\right] \right) X\right) }  \notag \\
&&  \label{dbarH1}
\end{eqnarray}%
By using that $N\left( V,\left[ JW,X\right] -\gamma \left( \left[ JW,X\right]
X\right) \right) =0$, $N\left( W,J\left( \left[ JV,X\right] -\gamma \left( %
\left[ JV,X\right] X\right) \right) \right) =0$ and respectively $N\left(
V,W\right) =0$ we have%
\begin{eqnarray}
&&\left[ V,J\left( \left[ JW,X\right] -\gamma \left( \left[ JW,X\right]
\right) X\right) \right] +J\left[ JV,J\left( \left[ JW,X\right] -\gamma
\left( \left[ JW,X\right] \right) X\right) \right]  \label{(33)} \\
&=&J\left[ V,\left[ JW,X\right] -\gamma \left( \left[ JW,X\right] X\right) %
\right] -\left[ JV,\left[ JW,X\right] -\gamma \left( \left[ JW,X\right]
X\right) \right] ,  \notag
\end{eqnarray}%
\begin{eqnarray}
&&-J\left[ JW,J\left( \left[ JV,X\right] -\gamma \left( \left[ JV,X\right]
\right) X\right) \right] -\left[ W,J\left( \left[ JV,X\right] -\gamma \left( %
\left[ JV,X\right] \right) X\right) \right]  \label{44} \\
&=&\left[ JW,\left[ JV,X\right] -\gamma \left( \left[ JV,X\right] \right) X%
\right] -J\left[ W,\left[ JV,X\right] -\gamma \left( \left[ JV,X\right]
\right) X\right] ,  \notag
\end{eqnarray}%
and%
\begin{equation}
\left[ J\left[ JV,JW\right] ,X\right] -\left[ J\left[ V,W\right] ,X\right] =-%
\left[ \left[ JV,W\right] ,X\right] -\left[ \left[ V,JW\right] ,X\right] ,
\label{55}
\end{equation}%
From (\ref{55}) we have also%
\begin{eqnarray}
&&J\left[ J\left[ JV,JW\right] ,X\right] -\gamma \left( \left[ J\left[ JV,JW%
\right] ,X\right] X\right) -J\left( \left[ J\left[ V,W\right] ,X\right]
-\gamma \left( \left[ J\left[ V,W\right] ,X\right] \right) X\right)  \notag
\\
&=&-J\left( \left[ \left[ JV,W\right] ,X\right] -\gamma \left( \left[ \left[
JV,W\right] ,X\right] \right) X\right) -J\left( \left[ \left[ V,JW\right] ,X%
\right] -\gamma \left( \left[ \left[ V,JW\right] ,X\right] \right) X\right) .
\notag \\
&&  \label{56}
\end{eqnarray}

Replacing now in (\ref{dbarH1}) the terms $\left( 2\right) +\left( 4\right) $
by (\ref{(33)}), the terms $\left( 6\right) +\left( 8\right) $ by (\ref{44})
and the terms $\left( 10\right) +\left( 12\right) $ by (\ref{55}) and (\ref%
{56}) we deduce 
\begin{eqnarray}
4\overline{\partial }H\left( V,W\right) &=&\underset{(\alpha )}{\left[ V,%
\left[ W,X\right] -\gamma \left( \left[ W,X\right] \right) X\right] }  \notag
\\
&&+\underset{\left( \beta \right) }{J\left[ V,\left[ JW,X\right] -\gamma
\left( \left[ JW,X\right] X\right) \right] }  \notag \\
&&\underset{\left( \gamma \right) }{J\left[ JV,\left[ W,X\right] -\gamma
\left( \left[ W,X\right] \right) X\right] }  \notag \\
&&\underset{\left( \delta \right) }{-\left[ JV,\left[ JW,X\right] -\gamma
\left( \left[ JW,X\right] X\right) \right] }  \notag \\
&&\underset{\left( \alpha \right) }{-\left[ W,\left[ V,X\right] -\gamma
\left( \left[ V,X\right] \right) X\right] }  \notag \\
&&+\underset{\left( \delta \right) }{\left[ JW,\left[ JV,X\right] -\gamma
\left( \left[ JV,X\right] X\right) \right] }  \label{dbarH2} \\
&&-\underset{\left( \beta \right) }{J\left[ JW,\left[ V,X\right] -\gamma
\left( \left[ V,X\right] \right) X\right] }  \notag \\
&&-\underset{\left( \gamma \right) }{J\left[ W,\left[ JV,X\right] -\gamma
\left( \left[ JV,X\right] \right) X\right] }  \notag \\
&&\underset{\left( \alpha \right) }{-\left[ \left[ V,W\right] ,X\right]
+\gamma \left( \left[ \left[ V,W\right] ,X\right] \right) X}  \notag \\
&&\underset{\left( \delta \right) }{+\left[ \left[ JV,JW\right] ,X\right]
-\gamma \left( \left[ \left[ JV,JW\right] ,X\right] \right) X}  \notag \\
&&-\underset{\left( \gamma \right) }{J\left( \left[ \left[ JV,W\right] ,X%
\right] -\gamma \left( \left[ \left[ JV,W\right] ,X\right] \right) X\right) }
\notag \\
&&-\underset{\left( \beta \right) }{J\left( \left[ \left[ V,JW\right] ,X%
\right] -\gamma \left( \left[ \left[ V,JW\right] ,X\right] \right) X\right) }%
.  \notag
\end{eqnarray}%
By applying the Jacobi identities (\ref{Jalfa}), (\ref{Jbeta}), (\ref{Jgama}%
), (\ref{Jdelta}) below 
\begin{eqnarray}
&&\left[ V,\left[ W,X\right] -\gamma \left( \left[ W,X\right] \right) X%
\right] -\left[ W,\left[ V,X\right] -\gamma \left( \left[ V,X\right] \right)
X\right] -\left[ \left[ V,W\right] ,X\right] +\gamma \left( \left[ \left[ V,W%
\right] ,X\right] \right) X  \notag \\
&=&-\left[ V,\gamma \left( \left[ W,X\right] \right) X\right] +\left[
W,\gamma \left( \left[ V,X\right] \right) X\right] +\gamma \left( \left[ %
\left[ V,W\right] ,X\right] \right) X  \notag \\
&&  \label{Jalfa}
\end{eqnarray}%
\begin{eqnarray}
&&\left[ V,\left[ JW,X\right] -\gamma \left( \left[ JW,X\right] X\right) %
\right] -\left[ JW,\left[ V,X\right] -\gamma \left( \left[ V,X\right]
\right) X\right] -\left[ \left[ V,JW\right] ,X\right]  \notag \\
&=&-\left[ V,\gamma \left( \left[ JW,X\right] X\right) \right] +\left[
JW,\gamma \left( \left[ V,X\right] \right) X\right] -\gamma \left( \left[ %
\left[ V,JW\right] ,X\right] \right) X  \notag \\
&&  \label{Jbeta}
\end{eqnarray}%
\begin{eqnarray}
&&\left[ JV,\left[ W,X\right] -\gamma \left( \left[ W,X\right] \right) X%
\right] -\left[ W,\left[ JV,X\right] -\gamma \left( \left[ JV,X\right]
\right) X\right] -\left( \left[ \left[ JV,W\right] ,X\right] -\gamma \left( %
\left[ \left[ JV,W\right] ,X\right] \right) X\right)  \notag \\
&=&-\left[ JV,\gamma \left( \left[ W,X\right] \right) X\right] +\left[
W,\gamma \left( \left[ JV,X\right] \right) X\right] +\gamma \left( \left[ %
\left[ JV,W\right] ,X\right] \right) X  \notag \\
&&  \label{Jgama}
\end{eqnarray}%
\begin{eqnarray}
&&-\left[ JV,\left[ JW,X\right] -\gamma \left( \left[ JW,X\right] X\right) %
\right] +\left[ JW,\left[ JV,X\right] -\gamma \left( \left[ JV,X\right]
X\right) \right] +\left[ \left[ JV,JW\right] ,X\right] -\gamma \left( \left[ %
\left[ JV,JW\right] ,X\right] \right) X  \notag \\
&=&\left[ JV,\gamma \left( \left[ JW,X\right] X\right) \right] -\left[
JW,\gamma \left( \left[ JV,X\right] X\right) \right] -\gamma \left( \left[ %
\left[ JV,JW\right] ,X\right] \right) X  \notag \\
&&  \label{Jdelta}
\end{eqnarray}%
for the pairs of $3$ terms denoted $\left( \alpha \right) $, $\left( \beta
\right) $, $\left( \gamma \right) $ and $\left( \delta \right) $ in (\ref%
{dbarH2}) we obtain%
\begin{eqnarray*}
4\overline{\partial }H\left( V,W\right) &=&-\left[ V,\gamma \left( \left[ W,X%
\right] \right) X\right] +\left[ W,\gamma \left( \left[ V,X\right] \right) X%
\right] +\gamma \left( \left[ \left[ V,W\right] ,X\right] \right) X \\
&&+J\left( -\left[ V,\gamma \left( \left[ JW,X\right] X\right) \right] +%
\left[ JW,\gamma \left( \left[ V,X\right] \right) X\right] -\gamma \left( %
\left[ \left[ V,JW\right] ,X\right] \right) X\right) \\
&&+J\left( -\left[ JV,\gamma \left( \left[ W,X\right] \right) X\right] +%
\left[ W,\gamma \left( \left[ JV,X\right] \right) X\right] +\gamma \left( %
\left[ \left[ JV,W\right] ,X\right] \right) X\right) \\
&&+\left[ JV,\gamma \left( \left[ JW,X\right] X\right) \right] -\left[
JW,\gamma \left( \left[ JV,X\right] X\right) \right] -\gamma \left( \left[ %
\left[ JV,JW\right] ,X\right] \right) X.
\end{eqnarray*}%
So%
\begin{equation*}
4\overline{\partial }H\left( V,W\right) =A+B+C+D
\end{equation*}%
where%
\begin{eqnarray*}
A &=&-\gamma \left( \left[ W,X\right] \right) \left[ V,X\right] -\underset{%
\left( \varphi \right) }{V\left( \gamma \left( \left[ W,X\right] \right)
\right) X} \\
&&+\gamma \left( \left[ V,X\right] \right) \left[ W,X\right] +\underset{%
\left( \varphi \right) }{W\left( \gamma \left( \left[ V,X\right] \right)
\right) X}+\underset{\left( \varphi \right) }{\gamma \left( \left[ \left[ V,W%
\right] ,X\right] \right) }X
\end{eqnarray*}%
\begin{eqnarray*}
B &=&J\left( -\gamma \left( \left[ JW,X\right] \right) \left[ V,X\right] -%
\underset{\left( \eta \right) }{V\left( \gamma \left( \left[ JW,X\right]
\right) \right) X}\right) \\
&&+J\left( \gamma \left( \left[ V,X\right] \right) \left[ JW,X\right] +%
\underset{\left( \eta \right) }{JW\left( \gamma \left( \left[ V,X\right]
\right) \right) X}-\underset{\left( \eta \right) }{\gamma \left( \left[ %
\left[ V,JW\right] ,X\right] \right) X}\right)
\end{eqnarray*}%
\begin{eqnarray*}
C &=&J\left( -\gamma \left( \left[ W,X\right] \right) \left[ JV,X\right] -%
\underset{\left( \psi \right) }{\left( JV\right) \left( \gamma \left( \left[
W,X\right] \right) \right) X}\right) \\
&&+J\left( \gamma \left( \left[ JV,X\right] \right) \left[ W,X\right] +%
\underset{\left( \psi \right) }{W\left( \gamma \left( \left[ JV,X\right]
\right) \right) X}+\underset{\left( \psi \right) }{\gamma \left( \left[ %
\left[ JV,W\right] ,X\right] \right) X}\right)
\end{eqnarray*}%
\begin{eqnarray*}
D &=&\gamma \left( \left[ JW,X\right] \right) \left[ JV,X\right] +\underset{%
\left( \omega \right) }{\left( JV\right) \left( \gamma \left( \left[ JW,X%
\right] \right) \right) X} \\
&&-\gamma \left( \left[ JV,X\right] \right) \left[ JW,X\right] -\underset{%
\left( \omega \right) }{\left( JW\right) \left( \gamma \left( \left[ JV,X%
\right] \right) \right) X}-\underset{\left( \omega \right) }{\gamma \left( %
\left[ \left[ JV,JW\right] ,X\right] \right) X}.
\end{eqnarray*}%
By Lemma \ref{i(X) d(gama ) db closed} $\iota _{X}d\gamma $ is $d_{b}$%
-closed, therefore by (\ref{db}) 
\begin{equation*}
d_{b}\iota _{X}d\gamma =d\iota _{X}d\gamma -\gamma \wedge \iota _{X}d\left(
\iota _{X}d\gamma \right) =0.
\end{equation*}%
It follows that%
\begin{equation}
d\iota _{X}d\gamma \left( V,W\right) =0.  \label{diXgama}
\end{equation}%
By using (\ref{iXdgama(V)}) we have%
\begin{eqnarray*}
d\left( \iota _{X}d\gamma \right) \left( V,W\right) &=&V\left( \left( \iota
_{X}d\gamma \right) \left( W\right) \right) -W\left( \left( \iota
_{X}d\gamma \right) V\right) -\left( \iota _{X}d\gamma \right) \left[ V,W%
\right] \\
&=&V\left( d\gamma \left( X,W\right) \right) -W\left( d\gamma \left(
X,V\right) \right) -d\gamma \left( X,\left[ V,W\right] \right) \\
&=&V\left( \gamma \left[ W,X\right] \right) -W\left( \gamma \left[ V,X\right]
\right) -\gamma \left( \left[ \left[ V,W\right] ,X\right] \right) .
\end{eqnarray*}

So from (\ref{diXgama}) it follows%
\begin{equation}
-V\left( \gamma \left[ W,X\right] \right) +W\left( \gamma \left[ V,X\right]
\right) +\gamma \left( \left[ \left[ V,W\right] ,X\right] \right) =0
\label{fi}
\end{equation}%
and similarly%
\begin{equation}
-\left( JV\right) \left( \gamma \left[ W,X\right] \right) +W\left( \gamma %
\left[ JV,X\right] \right) +\gamma \left( \left[ \left[ JV,W\right] ,X\right]
\right) =0,  \label{csi}
\end{equation}%
\begin{equation}
-V\left( \gamma \left[ JW,X\right] \right) +\left( JW\right) \left( \gamma %
\left[ V,X\right] \right) +\gamma \left( \left[ \left[ V,JW\right] ,X\right]
\right) =0,  \label{eta}
\end{equation}%
\begin{equation}
-\left( JV\right) \left( \gamma \left[ JW,X\right] \right) +\left( JW\right)
\left( \gamma \left[ JV,X\right] \right) +\gamma \left( \left[ \left[ JV,JW%
\right] ,X\right] \right) =0,  \label{omega}
\end{equation}

By using (\ref{fi}), (\ref{csi}), (\ref{eta}) and (\ref{omega}) the pairs of
terms denoted by $\left( \varphi \right) $, $\left( \psi \right) $, $\left(
\eta \right) $ and $\left( \omega \right) $ reduce in $A,B,C,D$ respectively
and the expression of $4\overline{\partial }H\left( V,W\right) $ becomes%
\begin{eqnarray}
4\overline{\partial }H\left( V,W\right) &=&-\gamma \left( \left[ W,X\right]
\right) \left[ V,X\right] +\gamma \left( \left[ V,X\right] \right) \left[ W,X%
\right]  \notag \\
&&+J\left( -\gamma \left( \left[ JW,X\right] \right) \left[ V,X\right]
+\gamma \left( \left[ V,X\right] \right) \left[ JW,X\right] \right)  \notag
\\
&&+J\left( -\gamma \left( \left[ W,X\right] \right) \left[ JV,X\right]
+\gamma \left( \left[ JV,X\right] \right) \left[ W,X\right] \right)  \notag
\\
&&+\gamma \left( \left[ JW,X\right] \right) \left[ JV,X\right] -\gamma
\left( \left[ JV,X\right] \right) \left[ JW,X\right]  \label{dbarH3}
\end{eqnarray}%
We compute now 
\begin{eqnarray*}
&&4\left( \left( \iota _{X}d\gamma \right) ^{0,1}\wedge H\right) \left(
V,W\right) \\
&=&4\left( \left( \iota _{X}d\gamma \right) ^{0,1}\left( V\right) HW-\left(
\iota _{X}d\gamma \right) ^{0,1}\left( W\right) HV\right) \\
&=&2\left( \left( \iota _{X}d\gamma \left( V\right) +i\iota _{X}d\gamma
\left( JV\right) \right) HW-\left( \iota _{X}d\gamma \left( W\right) +i\iota
_{X}d\gamma \left( JW\right) \right) HV\right) \\
&=&2\left( \iota _{X}d\gamma \left( V\right) HW+\iota _{X}d\gamma \left(
JV\right) \right) JHW-\left( \iota _{X}d\gamma \left( W\right) HV+\iota
_{X}d\gamma \left( JW\right) \right) JHV.
\end{eqnarray*}%
By using (\ref{H0}) and (\ref{iXdgama(V)}) we obtain 
\begin{eqnarray*}
&&4\left( \left( \iota _{X}d\gamma \right) ^{0,1}\wedge H\right) \left(
V,W\right) \\
&=&\gamma \left( \left[ V,X\right] \right) \left( \left[ W,X\right] -\gamma
\left( \left[ W,X\right] \right) X+J\left( \left[ JW,X\right] -\gamma \left( %
\left[ JW,X\right] \right) X\right) \right) \\
&&+\gamma \left( \left[ JV,X\right] \right) J\left( \left[ W,X\right]
-\gamma \left( \left[ W,X\right] X\right) -\left( \left[ JW,X\right] -\gamma
\left( \left[ JW,X\right] \right) X\right) \right) \\
&&-\gamma \left( \left[ W,X\right] \right) \left( \left[ V,X\right] -\gamma
\left( \left[ V,X\right] \right) X+J\left( \left[ JV,X\right] -\gamma \left( %
\left[ JV,X\right] X\right) \right) \right) \\
&&-\gamma \left( \left[ JW,X\right] \right) \left( J\left( \left[ V,X\right]
-\gamma \left( \left[ V,X\right] \right) X\right) -\left( \left[ JV,X\right]
-\gamma \left[ JV,X\right] X\right) \right) \\
&=&\gamma \left( \left[ V,X\right] \right) \left[ W,X\right] -\underset{%
\left( 1\right) }{\gamma \left( \left[ V,X\right] \right) \gamma \left( %
\left[ W,X\right] \right) X} \\
&&-\gamma \left( \left[ JV,X\right] \right) \left[ JW,X\right] +\underset{%
\left( 2\right) }{\gamma \left( \left[ JV,X\right] \right) \gamma \left( %
\left[ JW,X\right] \right) X} \\
&&-\gamma \left( \left[ W,X\right] \right) \left[ V,X\right] +\underset{%
\left( 1\right) }{\gamma \left( \left[ W,X\right] \right) \gamma \left( %
\left[ V,X\right] \right) X} \\
&&+\gamma \left( \left[ JW,X\right] \right) \left[ JV,X\right] -\underset{%
\left( 2\right) }{\gamma \left( \left[ JW,X\right] \right) \gamma \left[ JV,X%
\right] X} \\
&&+J\left( \gamma \left( \left[ V,X\right] \right) \left[ JW,X\right] -%
\underset{\left( 3\right) }{\gamma \left( \left[ V,X\right] \right) \gamma
\left( \left[ JW,X\right] \right) X}\right) \\
&&+J\left( \gamma \left( \left[ JV,X\right] \right) \left[ W,X\right] -%
\underset{\left( 4\right) }{\gamma \left( \left[ JV,X\right] \right) \gamma
\left( \left[ W,X\right] X\right) }\right) \\
&&-J\left( \gamma \left( \left[ W,X\right] \right) \left[ JV,X\right] -%
\underset{\left( 4\right) }{\gamma \left( \left[ W,X\right] \right) \gamma
\left( \left[ JV,X\right] X\right) }\right) \\
&&-J\left( \gamma \left( \left[ JW,X\right] \right) \left[ V,X\right] -%
\underset{\left( 3\right) }{\gamma \left( \left[ JW,X\right] \right) \gamma
\left( \left[ V,X\right] \right) X}\right) .
\end{eqnarray*}

After reducing the pairs of terms $\left( 1\right) ,\left( 2\right) ,\left(
3\right) ,\left( 4\right) $, this expression coincides with (\ref{dbarH3})
and the Lemma is proved.
\end{proof}

\begin{proposition}
\label{change couple}Let $\left( \gamma ,X\right) $,$\left( \widehat{\gamma }%
,\widehat{X}\right) $ be $DGLA$-defining couples,$\ \widehat{\gamma }%
=e^{\lambda }\gamma $, $\widehat{X}\ =e^{-\lambda }X+U$, $\lambda \in
C^{\infty }\left( L\right) $, $U\in \xi $. Then: 
\begin{equation*}
H_{J,\widehat{\gamma },\widehat{X}}=e^{-\lambda }H_{J,\gamma ,X}+\overline{%
\partial }U-\left( \left( \iota _{X}d\gamma \right) ^{0,1}-\overline{%
\partial }\lambda \right) \otimes U.
\end{equation*}
\end{proposition}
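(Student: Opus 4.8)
The plan is to evaluate $H_{J,\widehat\gamma,\widehat X}$ on an arbitrary $V\in\xi$ directly from its definition via the associated endomorphism $T_{\widehat\gamma,\widehat X}$, expand everything in terms of $(\gamma,X)$, $\lambda$ and $U$, and recognize the resulting summands as the three terms on the right. Recall that for a $DGLA$ defining couple $(\gamma',X')$ one has $T_{\gamma',X'}(V)=[V,X']-\gamma'([V,X'])X'$ and $H_{J,\gamma',X'}(V)=\tfrac12\bigl(T_{\gamma',X'}(V)+J\,T_{\gamma',X'}(JV)\bigr)$ (Definitions \ref{TY and HY}, \ref{TX and HX}). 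Fix $V\in\xi$. The Leibniz rule for the Lie bracket together with $Ve^{-\lambda}=-e^{-\lambda}(V\lambda)$ gives $[V,\widehat X]=e^{-\lambda}[V,X]-e^{-\lambda}(V\lambda)X+[V,U]$. Since $V,U\in\xi$ and $\xi$ is integrable, $[V,U]\in\xi$, hence $\gamma([V,U])=0$; with $\gamma(X)=1$ this yields $\widehat\gamma([V,\widehat X])=e^{\lambda}\gamma([V,\widehat X])=\gamma([V,X])-V\lambda$. Substituting into $T_{\widehat\gamma,\widehat X}(V)=[V,\widehat X]-\widehat\gamma([V,\widehat X])\widehat X$, the multiples of $X$ collapse to $-e^{-\lambda}\gamma([V,X])X$, and one obtains
\[
T_{\widehat\gamma,\widehat X}(V)=e^{-\lambda}T_{\gamma,X}(V)+T_U(V)-\bigl(\gamma([V,X])-V\lambda\bigr)U ,
\]
using $[V,U]=T_U(V)$ (by (\ref{TY}) with $Y=U$) and $\gamma([V,X])=(\iota_Xd\gamma)(V)$ (by (\ref{iXdgama(V)})).

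Next I apply the operation $\Phi\mapsto\bigl(V\mapsto\tfrac12(\Phi(V)+J\,\Phi(JV))\bigr)$ to each of the three summands, using the $C^\infty(L)$-linearity of $J$. The first summand gives $e^{-\lambda}H_{J,\gamma,X}(V)$; the second gives $\tfrac12\bigl(T_U(V)+J\,T_U(JV)\bigr)=H_U(V)=(\overline\partial U)(V)$ by Remark \ref{Rem H}(ii); the third gives
\[
-\tfrac12\Bigl(\bigl(\gamma([V,X])-V\lambda\bigr)U+\bigl(\gamma([JV,X])-(JV)\lambda\bigr)JU\Bigr).
\]
Since $(\iota_Xd\gamma)^{0,1}(V)=\tfrac12\bigl(\gamma([V,X])+i\gamma([JV,X])\bigr)$ by (\ref{iXdgama(V)}) and $(\overline\partial\lambda)(V)=\tfrac12\bigl(V\lambda+i(JV)\lambda\bigr)$, this last expression equals $-\bigl((\iota_Xd\gamma)^{0,1}-\overline\partial\lambda\bigr)(V)\otimes U$ under the identification of a section of $\Lambda^{0,1}_J(\xi)\otimes\xi$ with a $\xi$-valued $(0,1)$-form, a complex scalar $a+ib$ acting on $U$ as $aU+bJU$. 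Adding the three contributions gives the asserted formula.

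The computation is elementary; the points requiring care are the signs in the Leibniz rule and in differentiating $e^{-\lambda}$; the verification that at every stage each argument of $J$ already lies in $\xi$ — which holds because the $\gamma$- and $\widehat\gamma$-corrections built into $T_{\gamma,X}$ and $T_{\widehat\gamma,\widehat X}$ project the relevant brackets back into $\xi$, even though $\widehat X\notin\xi$ in general; and the clean handling of the $(0,1)$-projection together with the $\otimes$-identification in the third summand. No input beyond Remark \ref{Rem H} and (\ref{iXdgama(V)}) is required.
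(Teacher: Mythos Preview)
Your proof is correct and follows essentially the same approach as the paper: both expand $T_{\widehat\gamma,\widehat X}(V)$ directly from the definition, simplify using $\gamma([V,U])=0$ and $\gamma(X)=1$ to obtain $T_{\widehat\gamma,\widehat X}(V)=e^{-\lambda}T_{\gamma,X}(V)+[V,U]-(\gamma([V,X])-V\lambda)U$, and then apply the $(0,1)$-projection termwise, invoking Remark~\ref{Rem H}(ii) and (\ref{iXdgama(V)}). Your write-up is somewhat tidier in tracking why each argument of $J$ lies in $\xi$ and in making the $(a+ib)\cdot U=aU+bJU$ identification explicit, but the argument is the same.
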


\begin{proof}
Let $V\in \xi $. We have%
\begin{eqnarray*}
T_{\widehat{\gamma },\widehat{X}}\left( V\right) &=&\left[ V,e^{-\lambda }X+U%
\right] -e^{\lambda }\gamma \left( \left[ V,e^{-\lambda }X+U\right] \right)
\left( e^{-\lambda }X+U\right) \\
&=&e^{-\lambda }\left[ V,X\right] +V\left( e^{-\lambda }\right) X-\gamma
\left( e^{-\lambda }\left[ V,X\right] +V\left( e^{-\lambda }\right) X\right)
X \\
&&+\left[ V,U\right] -\gamma \left( \left[ V,U\right] \right) X-e^{\lambda
}\gamma \left( e^{-\lambda }\left[ V,X\right] +V\left( e^{-\lambda }\right)
X+\left[ V,U\right] \right) U.
\end{eqnarray*}%
Since $\left[ V,U\right] \in \xi $ and $\gamma \left( X\right) =1$ we obtain%
\begin{equation*}
T_{\widehat{\gamma },\widehat{X}}\left( V\right) =e^{-\lambda }T_{\gamma
,X}\left( V\right) +\left[ V,U\right] -\left( \gamma \left[ V,X\right]
+e^{\lambda }V\left( e^{-\lambda }\right) \right) U.
\end{equation*}%
It follows that%
\begin{eqnarray*}
H_{J,\widehat{\gamma },\widehat{X}}\left( V\right) &=&\frac{1}{2}\left( T_{%
\widehat{\gamma },\widehat{X}}\left( V\right) +JT_{\widehat{\gamma },%
\widehat{X}}\left( JV\right) \right) \\
&=&e^{-\lambda }H_{J,\gamma ,X}\left( V\right) +\frac{1}{2}\left( \left[ V,U%
\right] +J\left( \left[ JV,U\right] \right) \right) -\frac{1}{2}\left(
\gamma \left[ V,X\right] U+\gamma \left( \left[ JV,X\right] \right) JU\right)
\\
&&-\frac{1}{2}e^{\lambda }V\left( e^{-\lambda }\right) U-\frac{1}{2}%
e^{\lambda }JV\left( e^{-\lambda }\right) JU \\
&=&e^{-\lambda }H_{J,\gamma ,X}\left( V\right) +\overline{\partial }U\left(
V\right) -\frac{1}{2}\left( \gamma \left[ V,X\right] U+\gamma \left( \left[
JV,X\right] \right) JU\right) \\
&&-e^{\lambda }\left( \left( d\left( e^{-\lambda }\right) \left( V\right)
\right) U+\left( d\left( e^{-\lambda }\right) \left( JV\right) \right)
JU\right) .
\end{eqnarray*}%
By (\ref{iXdgama(V)}) we obtain%
\begin{eqnarray*}
H_{J,\widehat{\gamma },\widehat{X}}\left( V\right) &=&e^{-\lambda
}H_{J,\gamma ,X}\left( V\right) +\overline{\partial }U\left( V\right) -\frac{%
1}{2}\left( \iota _{X}d\gamma \left( V\right) U+\iota _{X}d\gamma \left(
JV\right) JU\right) \\
&&+\frac{1}{2}\left( \left( d\lambda \left( V\right) \right) U+\left(
d\lambda \left( JV\right) \right) JU\right) \\
&=&e^{-\lambda }H_{J,\gamma ,X}\left( V\right) +\overline{\partial }U\left(
V\right) -\left( \left( \left( \iota _{X}d\gamma -d\lambda \right)
^{0,1}\left( V\right) \right) U\right) \\
&=&\left( e^{-\lambda }H_{J,\gamma ,X}+\overline{\partial }U+\left( \left(
\iota _{X}d\gamma \right) ^{0,1}+\overline{\partial }\lambda \right) \otimes
U\right) \left( V\right)
\end{eqnarray*}%
and the Proposition is proved.
\end{proof}

\section{The $\mathfrak{\overline{\mathfrak{\beth }}}$-complex}

\begin{remark}
\label{i(X) d(gama )(0,1) dbar closed}$\left( \iota _{X}d\gamma \right)
^{0,1}$ is $\overline{\partial }_{J}$-closed. Indeed, by Lemma \ref{i(X)
d(gama ) db closed},%
\begin{equation*}
\overline{\partial }_{J}\left( \iota _{X}d\gamma \right) ^{0,1}=\left(
d_{b}\iota _{X}d\gamma \right) ^{0,2}=0.
\end{equation*}
\end{remark}

\begin{definition}
Let 
\begin{equation*}
\mathfrak{\overline{\mathfrak{\beth }}}=\mathfrak{\overline{\mathfrak{\beth }%
}}_{J,\gamma ,X}:\Lambda ^{0,\ast }\left( \xi \right) \otimes \xi
\rightarrow \Lambda ^{0,\ast }\left( \xi \right) \otimes \xi
\end{equation*}%
defined by%
\begin{equation*}
\mathfrak{\overline{\mathfrak{\beth }}}P=\overline{\partial }_{J}P-\left(
\iota _{X}d\gamma \right) ^{0,1}\wedge P,\ P\in \Lambda ^{0,p}\left( \xi
\right) \otimes \xi
\end{equation*}
\end{definition}

\begin{proposition}
\label{H fermee}a) $\mathfrak{\overline{\mathfrak{\beth }}}^{2}=0$;

b) $\mathfrak{\overline{\mathfrak{\beth }}}H=0$.
\end{proposition}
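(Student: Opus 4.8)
The plan is to handle the two assertions separately; both follow quickly from facts already in place, so the proof is essentially formal. Write $\theta =\left( \iota _{X}d\gamma \right) ^{0,1}$, so that $\mathfrak{\overline{\mathfrak{\beth }}}P=\overline{\partial }_{J}P-\theta \wedge P$.

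For a), I would expand $\mathfrak{\overline{\mathfrak{\beth }}}^{2}$ directly. For $P\in \Lambda ^{0,p}\left( \xi \right) \otimes \xi$ this gives
\[
\mathfrak{\overline{\mathfrak{\beth }}}^{2}P=\overline{\partial }_{J}^{2}P-\overline{\partial }_{J}\left( \theta \wedge P\right) -\theta \wedge \overline{\partial }_{J}P+\theta \wedge \theta \wedge P .
\]
The first term vanishes: since $\left( \xi ,J\right) $ is a Levi flat structure, $J$ is integrable on each leaf (equivalently $N_{J}=0$ on $\xi$), so $\left( \xi ,J\right) $ restricted to a leaf is the holomorphic tangent bundle of a complex manifold and the twisted leafwise operator $\overline{\partial }_{J}$ on $\Lambda ^{0,\ast }\left( \xi \right) \otimes \xi$ satisfies $\overline{\partial }_{J}^{2}=0$. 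The last term vanishes because $\theta$ is a $1$-form, hence $\theta \wedge \theta =0$. For the two remaining terms I would use the graded Leibniz formula defining $\overline{\partial }_{J}$ on $\Lambda ^{0,\ast }\left( \xi \right) \otimes \xi$ together with Remark~\ref{i(X) d(gama )(0,1) dbar closed}, which states $\overline{\partial }_{J}\theta =0$: this yields $\overline{\partial }_{J}\left( \theta \wedge P\right) =\overline{\partial }_{J}\theta \wedge P-\theta \wedge \overline{\partial }_{J}P=-\theta \wedge \overline{\partial }_{J}P$, so the second and third terms cancel and $\mathfrak{\overline{\mathfrak{\beth }}}^{2}P=0$.

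For b), the claim is nothing more than a restatement of Lemma~\ref{dbar H}, where $\overline{\partial }H=\theta \wedge H$ was proved for the $\left( 0,1\right) $-form $H=H_{J,\gamma ,X}$ associated to $\left( \gamma ,X\right) $. Hence $\mathfrak{\overline{\mathfrak{\beth }}}H=\overline{\partial }_{J}H-\theta \wedge H=0$, and no further computation is required.

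The only points demanding any care are bookkeeping ones: fixing the sign in the graded Leibniz rule for $\overline{\partial }_{J}$ acting on the wedge of the scalar $(0,1)$-form $\theta$ with a $\xi$-valued form (so that the two cross terms cancel with the sign claimed), and justifying $\overline{\partial }_{J}^{2}=0$ from the leafwise integrability of $J$. All the genuinely computational content — in particular the cancellation of the Nijenhuis contributions — has already been absorbed into Lemma~\ref{dbar 1 form} and Lemma~\ref{dbar H}, so I expect no real obstacle here beyond these formalities.
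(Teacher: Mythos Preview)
Your proposal is correct and follows essentially the same approach as the paper: the paper expands $\mathfrak{\overline{\beth}}^{2}P$ using $\overline{\partial}_{J}^{2}=0$, the Leibniz rule together with Remark~\ref{i(X) d(gama )(0,1) dbar closed}, and the vanishing of $\theta\wedge\theta$ to obtain the same cancellation, and for b) simply invokes Lemma~\ref{dbar H}. Your write-up is slightly more explicit about the individual ingredients, but the argument is the same.
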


\begin{proof}
a) By Remark \ref{i(X) d(gama )(0,1) dbar closed} we have%
\begin{eqnarray*}
\mathfrak{\overline{\mathfrak{\beth }}}^{2}P &=&\overline{\partial }_{J}%
\mathfrak{\overline{\mathfrak{\beth }}}P-\left( \iota _{X}d\gamma \right)
^{0,1}\wedge \mathfrak{\overline{\mathfrak{\beth }}}P \\
&=&\overline{\partial }_{J}\left( \overline{\partial }_{J}P-\left( \iota
_{X}d\gamma \right) ^{0,1}\wedge P\right) -\left( \iota _{X}d\gamma \right)
^{0,1}\wedge \left( \overline{\partial }_{J}P-\left( \iota _{X}d\gamma
\right) ^{0,1}\wedge P\right) \\
&=&\left( \iota _{X}d\gamma \right) ^{0,1}\wedge \overline{\partial }%
_{J}P-\left( \iota _{X}d\gamma \right) ^{0,1}\wedge \overline{\partial }%
_{J}P=0
\end{eqnarray*}%
.

b) follows by Lemma \ref{dbar H}.
\end{proof}

\begin{proposition}
\label{Iso cohomologie}Let $\left( \gamma ,X\right) $,$\left( \widehat{%
\gamma },\widehat{X}\right) $ be $DGLA$-defining couples,$\ \widehat{\gamma }%
=e^{\lambda }\gamma $, $\widehat{X}\ =e^{-\lambda }X+U$, $\lambda \in
C^{\infty }\left( L\right) $, $U\in \xi $. Then:

\begin{equation*}
\mathfrak{\overline{\mathfrak{\beth }}}_{J,\widehat{\gamma },\widehat{X}%
}e^{-\lambda }P=e^{-\lambda }\mathfrak{\overline{\mathfrak{\beth }}}%
_{J,\gamma ,X}P,\ P\in \Lambda ^{0,\ast }\left( \xi \right) \otimes \xi .
\end{equation*}

In particular the map $\Phi :\alpha \mapsto e^{-\lambda }\alpha $ induces an
isomorphism $\widehat{\Phi }:H^{0,\ast }\left( \Lambda ^{0,\ast }\left( \xi
\right) \otimes \xi ,\mathfrak{\overline{\mathfrak{\beth }}}_{J,\gamma
,X}\right) \rightarrow H^{0,\ast }\left( \Lambda ^{0,\ast }\left( \xi
\right) \otimes \xi ,\mathfrak{\overline{\mathfrak{\beth }}}_{J,\widehat{%
\gamma },\widehat{X}}\right) $.
\end{proposition}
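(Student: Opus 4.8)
The plan is to split the identity into a transformation rule for $\overline{\partial }_{J}$ acting on scalar multiples and a transformation rule for the one-form $\left( \iota _{X}d\gamma \right) ^{0,1}$, and then to observe that the resulting correction terms cancel.

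First I record that $\overline{\partial }_{J}$, as an operator on $\Lambda ^{0,\ast }\left( \xi \right) \otimes \xi $, depends only on the Levi flat structure $\left( \xi ,J\right) $ and not on the defining couple: by Lemma \ref{dbar 1 form} together with the defining formula $\overline{\partial }_{J}\left( \alpha \otimes Z\right) =\overline{\partial }_{J}\alpha \otimes Z+\left( -1\right) ^{p}\alpha \wedge \overline{\partial }_{J}Z$, it is written purely in terms of $J$ and the Lie bracket on $\mathcal{H}\left( \xi \right) $. Hence the only ingredient of $\mathfrak{\overline{\mathfrak{\beth }}}_{J,\gamma ,X}$ sensitive to the couple is $\left( \iota _{X}d\gamma \right) ^{0,1}$.

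Next I would compute how this one-form changes. For $V\in \xi $, formula (\ref{iXdgama(V)}) applied to $\left( \widehat{\gamma },\widehat{X}\right) $ (which is again a defining couple, so $\widehat{\gamma }\left( \widehat{X}\right) =1$ and $\ker \widehat{\gamma }=\xi $) gives $\iota _{\widehat{X}}d\widehat{\gamma }\left( V\right) =\widehat{\gamma }\left( \left[ V,\widehat{X}\right] \right) $. Expanding
\begin{equation*}
\left[ V,\widehat{X}\right] =e^{-\lambda }\left[ V,X\right] +\left( Ve^{-\lambda }\right) X+\left[ V,U\right] ,
\end{equation*}
and using $\widehat{\gamma }=e^{\lambda }\gamma $, $\gamma \left( X\right) =1$, $\gamma \left( \left[ V,U\right] \right) =0$ (because $\left[ V,U\right] \in \xi $ by integrability of $\xi $) and $e^{\lambda }Ve^{-\lambda }=-V\lambda $, one gets $\iota _{\widehat{X}}d\widehat{\gamma }\left( V\right) =\gamma \left( \left[ V,X\right] \right) -V\lambda $, i.e. $\iota _{\widehat{X}}d\widehat{\gamma }=\iota _{X}d\gamma -d\lambda $ on $\xi $; taking $\left( 0,1\right) $-parts and noting $\left( d\lambda \right) ^{0,1}=\overline{\partial }_{J}\lambda $,
\begin{equation*}
\left( \iota _{\widehat{X}}d\widehat{\gamma }\right) ^{0,1}=\left( \iota _{X}d\gamma \right) ^{0,1}-\overline{\partial }_{J}\lambda .
\end{equation*}
On the other hand, the Leibniz rule for $\overline{\partial }_{J}$ applied termwise on $\alpha \otimes Z$, together with $\overline{\partial }_{J}\left( e^{-\lambda }\right) =-e^{-\lambda }\overline{\partial }_{J}\lambda $, yields $\overline{\partial }_{J}\left( e^{-\lambda }P\right) =e^{-\lambda }\overline{\partial }_{J}P-e^{-\lambda }\left( \overline{\partial }_{J}\lambda \right) \wedge P$. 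Substituting both formulas into $\mathfrak{\overline{\mathfrak{\beth }}}_{J,\widehat{\gamma },\widehat{X}}\left( e^{-\lambda }P\right) =\overline{\partial }_{J}\left( e^{-\lambda }P\right) -\left( \iota _{\widehat{X}}d\widehat{\gamma }\right) ^{0,1}\wedge \left( e^{-\lambda }P\right) $, the two terms carrying $\overline{\partial }_{J}\lambda $ cancel, and one is left with $e^{-\lambda }\bigl( \overline{\partial }_{J}P-\left( \iota _{X}d\gamma \right) ^{0,1}\wedge P\bigr) =e^{-\lambda }\mathfrak{\overline{\mathfrak{\beth }}}_{J,\gamma ,X}P$, which is the claimed intertwining identity.

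Finally, for the isomorphism statement: $\Phi :\alpha \mapsto e^{-\lambda }\alpha $ is a degree-preserving linear bijection of $\Lambda ^{0,\ast }\left( \xi \right) \otimes \xi $ onto itself, with inverse $\alpha \mapsto e^{\lambda }\alpha $, and the identity just proved says exactly that it is an isomorphism of cochain complexes $\bigl( \Lambda ^{0,\ast }\left( \xi \right) \otimes \xi ,\mathfrak{\overline{\mathfrak{\beth }}}_{J,\gamma ,X}\bigr) \to \bigl( \Lambda ^{0,\ast }\left( \xi \right) \otimes \xi ,\mathfrak{\overline{\mathfrak{\beth }}}_{J,\widehat{\gamma },\widehat{X}}\bigr) $; a chain isomorphism induces an isomorphism in cohomology, which is $\widehat{\Phi }$. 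I expect the one genuinely error-prone step to be the sign bookkeeping in the two transformation rules, since it is precisely the matching of the two $\overline{\partial }_{J}\lambda $-contributions that makes the correction disappear.
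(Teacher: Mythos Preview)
Your proof is correct and follows essentially the same approach as the paper: both establish $\iota _{\widehat{X}}d\widehat{\gamma }=\iota _{X}d\gamma -d\lambda $ on $\xi$ and then combine this with the Leibniz rule for $\overline{\partial }_{J}$ on $e^{-\lambda }P$ so that the $\overline{\partial }_{J}\lambda$ terms cancel. The only cosmetic differences are that you invoke (\ref{iXdgama(V)}) to compute $\iota _{\widehat{X}}d\widehat{\gamma }$ via $\widehat{\gamma }\left( \left[ V,\widehat{X}\right] \right)$ whereas the paper expands $d\left( e^{\lambda }\gamma \right) $ directly, and you make explicit the (implicit in the paper) observation that $\overline{\partial }_{J}$ is independent of the defining couple.
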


\begin{proof}
Let $V\in \xi $. Since $\xi =\ker \gamma $ and $V,U,\left[ V,U\right] \in
\xi $ and $\gamma \left( X\right) =1$ we have%
\begin{eqnarray*}
\iota _{\widehat{X}}d\widehat{\gamma }\left( V\right) &=&d\left( e^{\lambda
}\gamma \right) \left( e^{-\lambda }X+U,V\right) =e^{\lambda }d\gamma \left(
e^{-\lambda }X+U,V\right) +e^{\lambda }d\lambda \wedge \gamma \left(
e^{-\lambda }X+U,V\right) \\
&=&e^{\lambda }\left( \left( -V\left( \gamma \left( e^{-\lambda }X+U\right)
\right) -\gamma \left[ e^{-\lambda }X+U,V\right] \right) -d\lambda \left(
V\right) \gamma \left( e^{-\lambda }X+U\right) \right) \\
&=&e^{\lambda }\left( \left( -V\left( e^{-\lambda }\right) -e^{-\lambda
}\gamma \left[ X,V\right] \right) +V\left( e^{-\lambda }\right) -e^{-\lambda
}d\lambda \left( V\right) \right) \\
&=&-\gamma \left[ X,V\right] -d\lambda \left( V\right) =\iota _{X}d\gamma
\left( V\right) -d\lambda \left( V\right) .
\end{eqnarray*}%
So%
\begin{eqnarray*}
\mathfrak{\overline{\mathfrak{\beth }}}_{J,\widehat{\gamma },\widehat{X}}P
&=&\overline{\partial }_{J}P-\left( \iota _{\widehat{X}}d\widehat{\gamma }%
\right) ^{0,1}\wedge P=\overline{\partial }_{J}P-\left( \iota _{X}d\gamma
-d\lambda \right) ^{0,1}\wedge P \\
&=&\mathfrak{\overline{\mathfrak{\beth }}}_{J,\gamma ,X}P+\overline{\partial 
}\lambda \wedge P
\end{eqnarray*}%
and%
\begin{eqnarray*}
\mathfrak{\overline{\mathfrak{\beth }}}_{J,\widehat{\gamma },\widehat{X}%
}\left( e^{-\lambda }P\right) &=&\mathfrak{\overline{\mathfrak{\beth }}}%
_{J,\gamma ,X}\left( e^{-\lambda }P\right) +\overline{\partial }\lambda
\wedge e^{-\lambda }P=\overline{\partial }_{J}\left( e^{-\lambda }P\right)
-\left( \iota _{X}d\gamma \right) ^{0,1}\wedge e^{-\lambda }P+\overline{%
\partial }\lambda \wedge e^{-\lambda }P \\
&=&e^{-\lambda }\overline{\partial }_{J}P-e^{-\lambda }\overline{\partial }%
\lambda \wedge e^{-\lambda }P-e^{-\lambda }\left( \iota _{X}d\gamma \right)
^{0,1}\wedge P+e^{-\lambda }\overline{\partial }\lambda \wedge P=e^{-\lambda
}\mathfrak{\overline{\mathfrak{\beth }}}_{J,\gamma ,X}P.
\end{eqnarray*}
\end{proof}

\section{Exact Levi flat structures}

\begin{lemma}
\label{exact}Let $\left( \xi ,J\right) $ a Levi flat structure. We denote $%
\left[ H_{J,\gamma ,X}\right] \in H^{0,1}\left( \Lambda _{J}^{0,\ast }\left(
\xi \right) \otimes \xi ,\mathfrak{\overline{\mathfrak{\beth }}}_{J,\gamma
,X}\right) $ the cohomology class of the $\left( 0,1\right) $-form $%
H_{J,\gamma ,X}$ \ associated to a $DGLA$ defining couple $\left( \gamma
,X\right) $. The following are equivalent:

i) There exists a $DGLA$- definining couple $\left( \gamma ,X\right) $ such
that $H_{J,\gamma ,X}=0$.

ii) $\left[ H_{J,\gamma ,X}\right] =0$ for every $DGLA$ definining couple $%
\left( \gamma ,X\right) $.

iii) There exists a $DGLA$- definining couple $\left( \gamma ,X\right) $
such that $\left[ H_{J,\gamma ,X}\right] =0$.
\end{lemma}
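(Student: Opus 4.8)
The plan is to prove the chain of implications $i)\Rightarrow ii)\Rightarrow iii)\Rightarrow i)$, using Proposition~\ref{change couple} as the key tool for relating the $(0,1)$-forms attached to different $DGLA$ defining couples, and Proposition~\ref{Iso cohomologie} for comparing the cohomology classes in the (a priori different) $\mathfrak{\overline{\mathfrak{\beth}}}$-complexes. By Proposition~\ref{H fermee}~b) the form $H_{J,\gamma,X}$ is $\mathfrak{\overline{\mathfrak{\beth}}}_{J,\gamma,X}$-closed, so its cohomology class $[H_{J,\gamma,X}]$ is well-defined, which makes all three statements meaningful.

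The implication $i)\Rightarrow ii)$ is the substantive one. Suppose $(\gamma_0,X_0)$ is a $DGLA$ defining couple with $H_{J,\gamma_0,X_0}=0$, and let $(\widehat\gamma,\widehat X)$ be an arbitrary $DGLA$ defining couple for $\xi$. First I would observe that any two $DGLA$ defining couples are related as in Proposition~\ref{change couple}: since $\ker\gamma_0=\ker\widehat\gamma=\xi$ we have $\widehat\gamma=e^\lambda\gamma_0$ for a unique $\lambda\in C^\infty(L)$, and then writing $\widehat X$ in the decomposition $T(L)=\mathbb{R}X_0\oplus\xi$ and imposing $\widehat\gamma(\widehat X)=1$ forces $\widehat X=e^{-\lambda}X_0+U$ with $U\in\xi$. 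Proposition~\ref{change couple} then gives
\begin{equation*}
H_{J,\widehat\gamma,\widehat X}=e^{-\lambda}H_{J,\gamma_0,X_0}+\overline{\partial}U-\bigl((\iota_{X_0}d\gamma_0)^{0,1}-\overline{\partial}\lambda\bigr)\otimes U=\overline{\partial}U-\bigl((\iota_{X_0}d\gamma_0)^{0,1}-\overline{\partial}\lambda\bigr)\otimes U,
\end{equation*}
using $H_{J,\gamma_0,X_0}=0$. From the computation in the proof of Proposition~\ref{Iso cohomologie} we have $(\iota_{\widehat X}d\widehat\gamma)^{0,1}=(\iota_{X_0}d\gamma_0)^{0,1}-\overline{\partial}\lambda$, so the right-hand side is exactly $\overline{\partial}_J U-(\iota_{\widehat X}d\widehat\gamma)^{0,1}\wedge U=\mathfrak{\overline{\mathfrak{\beth}}}_{J,\widehat\gamma,\widehat X}U$. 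Hence $H_{J,\widehat\gamma,\widehat X}=\mathfrak{\overline{\mathfrak{\beth}}}_{J,\widehat\gamma,\widehat X}U$ is $\mathfrak{\overline{\mathfrak{\beth}}}_{J,\widehat\gamma,\widehat X}$-exact, i.e. $[H_{J,\widehat\gamma,\widehat X}]=0$. (Here $U\in\xi=\Lambda^{0,0}(\xi)\otimes\xi$ is a legitimate $0$-cochain.) I expect the bookkeeping with the $(0,1)$-projections and the sign in front of $(\iota_{X_0}d\gamma_0)^{0,1}$ versus $\overline{\partial}\lambda$ to be the one delicate point; it is resolved by the identity for $\iota_{\widehat X}d\widehat\gamma$ already established inside the proof of Proposition~\ref{Iso cohomologie}.

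The implication $ii)\Rightarrow iii)$ is immediate: a universally quantified statement implies its existential weakening, once one notes that $DGLA$ defining couples exist (e.g. choose a defining $1$-form $\gamma$ for $\xi$, which exists since $\xi$ has codimension $1$ and is coorientable — or, if one prefers to avoid coorientability, work locally and note the statement is about the existence of some couple; in this paper $\gamma\in\wedge^1(L)$ is assumed to exist throughout Section~2). For $iii)\Rightarrow i)$: let $(\gamma,X)$ be a $DGLA$ defining couple with $[H_{J,\gamma,X}]=0$ in $H^{0,1}(\Lambda_J^{0,\ast}(\xi)\otimes\xi,\mathfrak{\overline{\mathfrak{\beth}}}_{J,\gamma,X})$. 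Then there is $U\in\Lambda^{0,0}(\xi)\otimes\xi=\xi$ (a section, i.e. a vector field tangent to $\xi$) with $H_{J,\gamma,X}=\mathfrak{\overline{\mathfrak{\beth}}}_{J,\gamma,X}U=\overline{\partial}_J U-(\iota_X d\gamma)^{0,1}\wedge U$. Now set $\widehat\gamma=\gamma$ (so $\lambda=0$) and $\widehat X=X-U$. Proposition~\ref{change couple} with $\lambda=0$ and the field $-U$ gives
\begin{equation*}
H_{J,\widehat\gamma,\widehat X}=H_{J,\gamma,X}+\overline{\partial}(-U)-(\iota_X d\gamma)^{0,1}\otimes(-U)=H_{J,\gamma,X}-\overline{\partial}_J U+(\iota_X d\gamma)^{0,1}\wedge U=0,
\end{equation*}
which is exactly $i)$. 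This closes the cycle $i)\Rightarrow ii)\Rightarrow iii)\Rightarrow i)$, so all three statements are equivalent. The only genuine content is the sign-and-projection check in Proposition~\ref{change couple} applied to the vector field $\pm U$, together with recognizing $\mathfrak{\overline{\mathfrak{\beth}}}_{J,\gamma,X}U$ on the right-hand side; everything else is formal.
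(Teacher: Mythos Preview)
Your proof is correct and follows essentially the same route as the paper's: the chain $i)\Rightarrow ii)\Rightarrow iii)\Rightarrow i)$ via Proposition~\ref{change couple}, with the same choice $(\widehat\gamma,\widehat X)=(\gamma,X-U)$ for $iii)\Rightarrow i)$. The only cosmetic difference is in $i)\Rightarrow ii)$, where the paper first computes $\mathfrak{\overline{\mathfrak{\beth}}}_{J,\gamma_0,X_0}(e^{\lambda}U)=e^{\lambda}H_{J,\widehat\gamma,\widehat X}$ and then invokes the isomorphism of Proposition~\ref{Iso cohomologie}, whereas you use the identity $(\iota_{\widehat X}d\widehat\gamma)^{0,1}=(\iota_{X_0}d\gamma_0)^{0,1}-\overline{\partial}\lambda$ from that proposition's proof to recognize $\mathfrak{\overline{\mathfrak{\beth}}}_{J,\widehat\gamma,\widehat X}U$ directly---the content is identical.
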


\begin{proof}
$i)\implies ii)$. Let $\left( \gamma ,X\right) $ be a $DGLA$- definining
couple such that $H_{J,\gamma ,X}=0$. Let $\left( \widehat{\gamma },\widehat{%
X}\right) $ be a$\ DGLA$-defining couple,$\ \widehat{\gamma }=e^{\lambda
}\gamma $, $\widehat{X}\ =e^{-\lambda }X+U$, $\lambda \in C^{\infty }\left(
L\right) $, $U\in \xi $. By Proposition \ref{change couple} we have%
\begin{equation*}
H_{J,\widehat{\gamma },\widehat{X}}=\overline{\partial }U-\left( \left(
\iota _{X}d\gamma \right) ^{0,1}-\overline{\partial }\lambda \right) \otimes
U.
\end{equation*}%
But%
\begin{equation*}
\mathfrak{\overline{\mathfrak{\beth }}}_{J,\gamma ,X}\left( e^{\lambda
}U\right) =e^{\lambda }\left( \overline{\partial }_{J}U-\left( \left( \iota
_{X}d\gamma \right) ^{0,1}-\overline{\partial }\lambda \right) \otimes
U\right) =e^{\lambda }H_{J,\widehat{\gamma },\widehat{X}}
\end{equation*}%
and by Proposition \ref{Iso cohomologie} it follows that $H_{J,\widehat{%
\gamma },\widehat{X}}=\mathfrak{\overline{\mathfrak{\beth }}}_{J,\widehat{%
\gamma },\widehat{X}}U$.

$ii)\implies iii)$ is obvious.

$iii)\implies i)$. Let $\left( \gamma ,X\right) $ be a $DGLA$- definining
couple and $U\in \xi $ such that $H_{J,\gamma ,X}=\mathfrak{\overline{%
\mathfrak{\beth }}}_{J,\gamma ,X}U$. Let $\left( \widehat{\gamma },\widehat{X%
}\right) =\left( \left( \gamma ,X-U\right) \right) $. By Proposition \ref%
{change couple} 
\begin{equation*}
H_{J,\widehat{\gamma },\widehat{X}}=\mathfrak{\overline{\mathfrak{\beth }}}%
_{J,\gamma ,X}U-\left( \overline{\partial }U-\left( \iota _{X}d\gamma
\right) ^{0,1}\otimes U\right) =0.
\end{equation*}
\end{proof}

\begin{definition}
Let $\left( \xi ,J\right) $ be a Levi flat structure We say that $\left( \xi
,J\right) $ is exact if it verifies one of the equivalent conditions of
Lemma \ref{exact}.
\end{definition}

\begin{example}
1) Let $\left( M,J\right) $ be a complex manifold and $L=M\times S^{1}$.
Consider the Levi flat structure $\xi =\left( T\left( M\right) ,J\right) $ \
and let $\left( \gamma ,X\right) $ a $DGLA$ defining couple, where $\gamma
=d\theta $ and $X=\frac{\partial }{\partial \theta }$ where $\theta $ is a
coordinate on $S^{1}$. Then $H=0$, so the Levi flat structure is exact.

2) Let $\left( M,J\right) $ be a compact complex manifold and $L=\left\{
z\in M:\ r\left( z\right) =0\right\} $ a real analytic Levi flat
hypersurface in $M$ where $r$ a is real anlytic function and $dr\neq 0$ on $%
L $. Then the Levi flat structure $\left( T\mathbb{C}\left( L\right)
,J\right) $ is exact.

Indeed,let $g$ be a fixed Hermitian metric on $M$ and $Z=grad_{g}r/\left%
\Vert grad_{g}r\right\Vert _{g}^{2}$. Then $\left( \gamma ,X\right) =\left(
d_{J}^{c}r,JZ\right) $ is a DGLA defining couple for the Levi foliation \cite%
{Bartolomeis10}.

Let $p\in L$. There exists holomorphic coordinates $z=\left( z_{1},\cdot
\cdot \cdot ,z_{n}\right) $, $z_{j}=x_{2j-1}+ix_{2j}$, $j=1,\cdot \cdot
\cdot ,n$, in a neighborhood $U$ of $p$ such that $L=\left\{ z\in
U:x_{2n-1}=0\ \right\} $, so $r=-e^{\lambda }x_{2n-1}$ with $\lambda $ a
smooth function on $U$. Set $grad_{g}r=\sum_{i=1}^{2n}a_{i}\frac{\partial }{%
\partial x_{i}}$. We have $g\left( grad_{g}r,Y\right) =dr\left( Y\right) $
for every vector field $Y,$ so for every $\left( \alpha _{1},\cdot \cdot
\cdot ,\alpha _{2n}\right) \in \mathbb{R}^{2n}$ we have 
\begin{equation*}
g\left( \sum_{i=1}^{2n}a_{i}\frac{\partial }{\partial x_{i}}%
,\sum_{i=1}^{2n}\alpha _{i}\frac{\partial }{\partial x_{i}}\right)
=\sum_{i=1}^{2n}\alpha _{i}\frac{\partial r}{\partial x_{i}}%
=-\sum_{i=1}^{2n}\alpha _{i}e^{\lambda }\frac{\partial \lambda }{\partial
x_{i}}x_{2n-1}-e^{\lambda }\alpha _{2n-1}
\end{equation*}%
on $U$.\ In particular on $U\cap L$ we have $g\left( \sum_{i=1}^{2n}a_{i}%
\frac{\partial }{\partial x_{i}},\sum_{i=1}^{2n-1}\alpha _{i}\frac{\partial 
}{\partial x_{i}}\right) =0$ and it follows that $\alpha _{1}=\cdot \cdot
\cdot =\alpha _{2n-2}=\alpha _{2n}=0$ on $U\cap L$. So 
\begin{equation*}
X_{\left\vert U\cap L\right. }=J\frac{grad_{g}r}{\left\Vert
grad_{g}r\right\Vert _{g}^{2}}_{\left\vert U\cap L\right. }=e^{-\lambda }%
\frac{\partial }{\partial x_{2n}}.
\end{equation*}

Set now $z_{j}=x_{j}+iy_{j}$, $j=1,\cdot \cdot \cdot ,n$, and let $%
V=\sum_{j=1}^{n-1}a_{j}\frac{\partial }{\partial x_{j}}+\sum_{j=1}^{n-1}b_{j}%
\frac{\partial }{\partial y_{j}}\in \ker \gamma $. Then on $U\cap L$ we have 
\begin{eqnarray*}
\left[ V,X\right] &=&\left[ \sum_{j=1}^{n-1}a_{j}\frac{\partial }{\partial
x_{j}}+\sum_{j=1}^{n-1}b_{j}\frac{\partial }{\partial y_{j}},e^{-\lambda }%
\frac{\partial }{\partial y_{n}}\right] \\
&=&-e^{-\lambda }\left( \sum_{j=1}^{n-1}\frac{\partial a_{j}}{\partial y_{n}}%
\frac{\partial }{\partial x_{j}}+\sum_{j=1}^{n-1}\frac{\partial b_{j}}{%
\partial y_{n}}\frac{\partial }{\partial y_{j}}\right) +\left(
\sum_{j=1}^{n-1}a_{j}\frac{\partial }{\partial x_{j}}+\sum_{j=1}^{n-1}b_{j}%
\frac{\partial }{\partial y_{j}}\right) \left( e^{-\lambda }\right) \frac{%
\partial }{\partial y_{n}},
\end{eqnarray*}%
\begin{eqnarray*}
\left[ JV,X\right] &=&\left[ -\sum_{j=1}^{n-1}b_{j}\frac{\partial }{\partial
x_{j}}+\sum_{j=1}^{n-1}a_{j}\frac{\partial }{\partial y_{j}},e^{-\lambda }%
\frac{\partial }{\partial y_{n}}\right] \\
&=&=-e^{-\lambda }\left( -\sum_{j=1}^{n-1}\frac{\partial b_{j}}{\partial
y_{n}}\frac{\partial }{\partial x_{j}}+\sum_{j=1}^{n-1}\frac{\partial a_{j}}{%
\partial y_{n}}\frac{\partial }{\partial y_{j}}\right) +\left(
-\sum_{j=1}^{n-1}b_{j}\frac{\partial }{\partial x_{j}}+\sum_{j=1}^{n-1}a_{j}%
\frac{\partial }{\partial y_{j}}\right) \left( e^{-\lambda }\right) \frac{%
\partial }{\partial y_{n}},
\end{eqnarray*}%
so 
\begin{equation*}
\left[ V,X\right] -\gamma \left( \left[ V,X\right] \right) X=-e^{-\lambda
}\left( \sum_{j=1}^{n-1}\frac{\partial a_{j}}{\partial y_{n}}\frac{\partial 
}{\partial x_{j}}+\sum_{j=1}^{n-1}\frac{\partial b_{j}}{\partial y_{n}}\frac{%
\partial }{\partial y_{j}}\right)
\end{equation*}%
and%
\begin{equation*}
J\left( \left[ JV,X\right] -\gamma \left( \left[ JV,X\right] \right)
X\right) =-e^{-\lambda }\left( \sum_{j=1}^{n-1}-\frac{\partial a_{j}}{%
\partial y_{n}}\frac{\partial }{\partial x_{j}}-\sum_{j=1}^{n-1}\frac{%
\partial b_{j}}{\partial y_{n}}\frac{\partial }{\partial y_{j}}\right) .
\end{equation*}%
It follows that $H=0$.
\end{example}

\section{Deformation theory of Levi flat structures}

In this paragraph we a consider a fixed DGLA defining couple $\left( \gamma
,X\right) $ and $T=T_{\gamma ,X}$, $H=H_{J,\gamma ,X}$ will design the
endomorphism and respectively the $\left( 0,1\right) $-form associated to
the DGLA\ defining couple $\left( \gamma ,X\right) $ (Definition \ref{TX and
HX}).

Let $\alpha \in \mathcal{Z}^{1}\left( L\right) $ satisfying the
Maurer-Cartan equation. We start with a formula which describes the
deformation of the Lie bracket:

\begin{lemma}
\label{Rel [ ] and [ ]alfa} 
\begin{equation*}
\left[ \cdot ,\cdot \right] _{\alpha }=\left[ \cdot ,\cdot \right] +\alpha
\wedge T.
\end{equation*}
\end{lemma}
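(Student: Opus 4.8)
The plan is to reduce the identity to the Maurer--Cartan equation. I would start from the explicit expression for $[V,W]_{\alpha}$ already obtained above in the proof that $\left( \mathcal{H}(\xi)_{\alpha},[\cdot,\cdot]_{\alpha},J\right) $ is of derivation type, namely formula (\ref{[V,W]alfa}), valid for $V,W\in\mathcal{H}(\xi)$. Recall that the right-hand side of the Lemma means $(\alpha\wedge T)(V,W)=\alpha(V)T(W)-\alpha(W)T(V)$ with $T(Z)=[Z,X]-\gamma([Z,X])X$. The first step is to rewrite the two bracket terms $-\alpha(V)[X,W]$ and $-\alpha(W)[V,X]$ appearing in (\ref{[V,W]alfa}) by means of $[W,X]=T(W)+\gamma([W,X])X$ and $[V,X]=T(V)+\gamma([V,X])X$. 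This isolates exactly $\alpha(V)T(W)-\alpha(W)T(V)$, and the entire difference between the two sides of the Lemma becomes a single multiple of $X$.

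Thus the Lemma reduces to a scalar identity: after collecting the $X$-terms one has to show
\[
\alpha(V)\gamma([W,X])-\alpha(W)\gamma([V,X])+W(\alpha(V))-V(\alpha(W))+\alpha(V)X(\alpha(W))-\alpha(W)X(\alpha(V))+\alpha([V,W])-\alpha(V)\alpha([X,W])-\alpha(W)\alpha([V,X])=0
\]
for all $V,W\in\mathcal{H}(\xi)$. To treat it, I would pass to intrinsic quantities: $\gamma([Z,X])=(\iota_{X}d\gamma)(Z)$ by (\ref{iXdgama(V)}); $X(\alpha(Z))=(\iota_{X}d\alpha)(Z)+\alpha([X,Z])$ by Cartan's formula and $\iota_{X}\alpha=0$; and $\alpha([X,W])=-\alpha(T(W))$, $\alpha([V,X])=\alpha(T(V))$, again using $\iota_{X}\alpha=0$. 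After these substitutions the four terms quadratic in $\alpha$ cancel in pairs, and, recognizing $W(\alpha(V))-V(\alpha(W))+\alpha([V,W])=-d\alpha(V,W)$, what remains is exactly
\[
\left( -d\alpha+\alpha\wedge\iota_{X}d\gamma+\alpha\wedge\iota_{X}d\alpha\right) (V,W).
\]

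The decisive step is to see that this $2$-form vanishes on $\xi$, and here the Maurer--Cartan hypothesis enters. Expanding $\delta\alpha+\frac{1}{2}\{\alpha,\alpha\}=0$ with $\delta=d+\{\gamma,\cdot\}$, using (\ref{gama alfa}) for $\{\gamma,\alpha\}=\iota_{X}d\gamma\wedge\alpha-\gamma\wedge\iota_{X}d\alpha$ and (\ref{alfa beta}) together with the antisymmetry of $1$-forms under $\wedge$ for $\frac{1}{2}\{\alpha,\alpha\}=-\alpha\wedge\iota_{X}d\alpha$, the Maurer--Cartan equation becomes
\[
d\alpha=\alpha\wedge\iota_{X}d\gamma+\gamma\wedge\iota_{X}d\alpha+\alpha\wedge\iota_{X}d\alpha.
\]
Substituting this, the $2$-form above collapses to $-\gamma\wedge\iota_{X}d\alpha$, which is of the form $\gamma\wedge(\cdot)$ and hence restricts to $0$ on $\xi=\ker\gamma$; since $V,W\in\mathcal{H}(\xi)$ lie in $\ker\gamma$, the scalar identity holds and the Lemma follows.

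The conceptual heart is this single use of the Maurer--Cartan equation: without the integrability of $\xi_{\alpha}$ the displayed $2$-form does not restrict to zero on $\xi$. I expect the only genuine nuisance to be the sign bookkeeping when collecting the $X$-terms of (\ref{[V,W]alfa}) and in the various $\iota_{X}$-contractions; everything else is straightforward substitution.
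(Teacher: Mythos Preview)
Your proof is correct and follows essentially the same route as the paper's: both start from formula (\ref{[V,W]alfa}), use the identity $\gamma([Z,X])=(\iota_X d\gamma)(Z)$ and the Cartan formula for $d\alpha$, and invoke the Maurer--Cartan equation in the identical way to kill the residual $X$-component. Your organization---first isolating $(\alpha\wedge T)(V,W)$ from the non-$X$ part and then reducing everything to a single scalar identity before applying Maurer--Cartan---is slightly more economical than the paper's, which computes the two sides separately and compares, but the substance is the same.
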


\begin{proof}
We have 
\begin{eqnarray}
\left( \alpha \wedge T\right) \left( V,W\right) &=&\alpha \left( V\right)
T\left( W\right) -\alpha \left( W\right) T\left( V\right)  \notag \\
&=&\alpha \left( V\right) \left( \left[ W,X\right] -\gamma \left( \left[ W,X%
\right] \right) X\right)  \notag \\
&&-\alpha \left( W\right) \left( \left[ V,X\right] -\gamma \left( \left[ V,X%
\right] \right) X\right)  \label{alfa lamda TX0} \\
&=&\alpha \left( V\right) \left[ W,X\right] -\alpha \left( W\right) \left[
V,X\right]  \notag \\
&&-\alpha \left( V\right) \gamma \left( \left[ W,X\right] \right) X+\alpha
\left( W\right) \gamma \left( \left[ V,X\right] \right) X.  \notag
\end{eqnarray}%
Since $\gamma \left( V\right) =\gamma \left( W\right) =0$ and $\gamma \left(
X\right) =1$ it follows that $\gamma \left( \left[ W,X\right] \right)
=d\gamma \left( X,W\right) $ and $\gamma \left( \left[ V,X\right] \right)
=d\gamma \left( X,V\right) $.

Replacing in (\ref{alfa lamda TX0}) we obtain%
\begin{equation}
\left( \alpha \wedge T\right) \left( V,W\right) =\alpha \left( V\right) 
\left[ W,X\right] -\alpha \left( W\right) \left[ V,X\right] -\left( \alpha
\left( V\right) d\gamma \left( X,W\right) -\alpha \left( W\right) d\gamma
\left( X,V\right) \right) X.  \label{alfa lamda TX}
\end{equation}

But $\alpha \in \mathcal{Z}^{1}\left( L\right) $ satisfies the Maurer-Cartan
equation, so%
\begin{equation*}
\delta \alpha +\frac{1}{2}\left\{ \alpha ,\alpha \right\} =d\alpha +\iota
_{X}\left( d\alpha \right) \wedge \alpha +\iota _{X}d\gamma \wedge \alpha
-\gamma \wedge \iota _{X}d\alpha =0
\end{equation*}%
and it follows that%
\begin{equation*}
d\alpha \left( V,W\right) =\left( -\iota _{X}\left( d\alpha \right) \wedge
\alpha -\iota _{X}d\gamma \wedge \alpha +\gamma \wedge \iota _{X}d\alpha
\right) \left( V,W\right) .
\end{equation*}%
Taking in account that $\gamma \left( V\right) =\gamma \left( W\right) =0,\
\alpha \left( X\right) =0$ we obtain%
\begin{eqnarray*}
d\alpha \left( V,W\right) &=&-d\alpha \left( X,V\right) \alpha \left(
W\right) +d\alpha \left( X,W\right) \alpha \left( V\right) \\
&&-d\gamma \left( X,V\right) \alpha \left( W\right) +d\gamma \left(
X,W\right) \alpha \left( V\right)
\end{eqnarray*}

so%
\begin{equation*}
\alpha \left( V\right) d\gamma \left( X,W\right) -\alpha \left( W\right)
d\gamma \left( X,V\right) =d\alpha \left( V,W\right) -\alpha \left( V\right)
d\alpha \left( X,W\right) +\alpha \left( W\right) d\alpha \left( X,V\right)
\end{equation*}%
and (\ref{alfa lamda TX}) becomes%
\begin{equation}
\left( \alpha \wedge T\right) \left( V,W\right) =\alpha \left( V\right) 
\left[ W,X\right] -\alpha \left( W\right) \left[ V,X\right] -\left( d\alpha
\left( V,W\right) -\alpha \left( V\right) d\alpha \left( X,W\right) +\alpha
\left( W\right) d\alpha \left( X,V\right) \right) X  \label{alfa lamda TX1}
\end{equation}

By (\ref{[V,W]alfa}) we have%
\begin{eqnarray}
\left[ V,W\right] _{\alpha }-\left[ V,W\right] &=&\alpha \left( V\right) %
\left[ W,X\right] -\alpha \left( W\right) \left[ V,X\right]  \notag \\
&&+\left( W\left( \alpha \left( V\right) \right) -V\left( \alpha \left(
W\right) \right) \right) X  \notag \\
&&+\alpha \left( V\right) X\left( \alpha \left( W\right) \right) X-\alpha
\left( W\right) \left( X\left( \alpha \left( V\right) \right) \right) X
\label{[V,W]alfa4} \\
&&-\alpha \left( V\right) \alpha \left( \left[ X,W\right] \right) X-\alpha
\left( W\right) \alpha \left( \left[ V,X\right] \right) X+\alpha \left( %
\left[ V,W\right] \right) X.  \notag
\end{eqnarray}%
Since $\alpha \left( X\right) =0$ we have 
\begin{equation*}
\alpha \left( V\right) X\left( \alpha \left( W\right) \right) -\alpha \left(
V\right) \alpha \left( \left[ X,W\right] \right) =\alpha \left( V\right)
d\alpha \left( X,W\right) ,
\end{equation*}%
\begin{equation*}
-\alpha \left( W\right) X\left( \alpha \left( V\right) \right) +\alpha
\left( W\right) \alpha \left( \left[ X,V\right] \right) =-\alpha \left(
W\right) d\alpha \left( X,V\right)
\end{equation*}%
and%
\begin{equation*}
W\left( \alpha \left( V\right) \right) -V\left( \alpha \left( W\right)
\right) -\alpha \left( \left[ W,V\right] \right) =-d\alpha \left( V,W\right)
,
\end{equation*}%
so (\ref{[V,W]alfa4}) becomes%
\begin{eqnarray}
\left[ V,W\right] _{\alpha }-\left[ V,W\right] &=&\alpha \left( V\right) %
\left[ W,X\right] -\alpha \left( W\right) \left[ V,X\right]  \notag \\
&&\left( \alpha \left( V\right) d\alpha \left( X,W\right) -\alpha \left(
W\right) d\alpha \left( X,V\right) -d\alpha \left( V,W\right) \right) X.
\label{[V,W]alfa3}
\end{eqnarray}%
The Lemma follows now from (\ref{alfa lamda TX1}) and (\ref{[V,W]alfa3}).
\end{proof}

\begin{corollary}
\label{Rel N alfa H}Let $N_{J}^{\alpha }$ be the Nijenhuis tensor for $%
\left( \mathcal{H}\left( \xi \right) ,\left[ \cdot ,\cdot \right] _{\alpha
},J\right) $. Then%
\begin{equation*}
N_{J}^{\alpha }=-4\alpha _{J}^{0,1}\wedge H.
\end{equation*}
\end{corollary}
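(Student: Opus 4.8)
The plan is a direct pointwise computation that expands both sides and matches them term by term. I would start from the definition of the Nijenhuis tensor of the complex $C^\infty(L)$-Lie algebra of derivation type $\left(\mathcal{H}(\xi),[\cdot,\cdot]_\alpha,J\right)$,
\[
N_J^\alpha(V,W)=[JV,JW]_\alpha-[V,W]_\alpha-J[JV,W]_\alpha-J[V,JW]_\alpha ,
\]
and substitute the identity $[\cdot,\cdot]_\alpha=[\cdot,\cdot]+\alpha\wedge T$ from Lemma~\ref{Rel [ ] and [ ]alfa}. The four undeformed brackets recombine into the Nijenhuis tensor $N_J(V,W)$ of $\left(\mathcal{H}(\xi),[\cdot,\cdot],J\right)$, which vanishes identically because $J$ is an integrable complex structure on each leaf of $\xi$ — this is precisely the hypothesis that $(\xi,J)$ is a Levi flat structure. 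Hence only the bilinear contributions of $\alpha\wedge T$ survive:
\[
N_J^\alpha(V,W)=(\alpha\wedge T)(JV,JW)-(\alpha\wedge T)(V,W)-J(\alpha\wedge T)(JV,W)-J(\alpha\wedge T)(V,JW).
\]

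Next I would expand each factor via $(\alpha\wedge T)(A,B)=\alpha(A)T(B)-\alpha(B)T(A)$ and collect the resulting eight terms according to their scalar coefficients $\alpha(V)$, $\alpha(W)$, $\alpha(JV)$, $\alpha(JW)$; a short rearrangement yields the coefficients $-(T(W)+JT(JW))$, $T(V)+JT(JV)$, $T(JW)-JT(W)$ and $JT(V)-T(JV)$, respectively. By the definition of $H=H_{J,\gamma,X}$ in~\eqref{HY1} (Definition~\ref{TX and HX}) one has $T(U)+JT(JU)=2H(U)$ for all $U\in\xi$, and using $J^2=-\mathrm{id}$ this also gives $T(JU)-JT(U)=2H(JU)=-2JH(U)$. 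Therefore
\[
N_J^\alpha(V,W)=-2\alpha(V)H(W)+2\alpha(W)H(V)-2\alpha(JV)JH(W)+2\alpha(JW)JH(V).
\]

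Finally I would compare with the right-hand side. From the notation $\alpha_J^{0,1}(U)=\tfrac12\bigl(\alpha(U)+i\alpha(JU)\bigr)$, reading multiplication by $i$ on the $\xi$-valued factor as the action of $J$, one gets $\alpha_J^{0,1}(U)H(U')=\tfrac12\bigl(\alpha(U)H(U')+\alpha(JU)JH(U')\bigr)$, so
\[
-4\bigl(\alpha_J^{0,1}\wedge H\bigr)(V,W)=-4\bigl(\alpha_J^{0,1}(V)H(W)-\alpha_J^{0,1}(W)H(V)\bigr)
\]
expands to exactly the displayed formula for $N_J^\alpha(V,W)$, completing the proof. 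The whole argument is mechanical; the only points that demand care are the sign conventions in $N_J$, the vanishing $N_J=0$ provided by the Levi flat hypothesis, and the correct interpretation of the wedge product of the $\mathbb{C}$-valued form $\alpha_J^{0,1}$ with the $\xi$-valued form $H$ under the identification $i\leftrightarrow J$ — I do not expect any genuine obstacle beyond the bookkeeping.
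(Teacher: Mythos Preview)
Your proof is correct and follows essentially the same route as the paper: both substitute $[\cdot,\cdot]_\alpha=[\cdot,\cdot]+\alpha\wedge T$ from Lemma~\ref{Rel [ ] and [ ]alfa}, use $N_J=0$, collect the eight $T$-terms into $2H$ via the definition of $H$, and match against the expansion of $-4(\alpha_J^{0,1}\wedge H)(V,W)$ under the identification $i\leftrightarrow J$. The organization and bookkeeping are the same; there is no substantive difference.
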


\begin{proof}
Let $V,W\in \xi $. By Lemma \ref{Rel [ ] and [ ]alfa} 
\begin{equation*}
\left[ JV,JW\right] _{\alpha }=\left[ JV,JW\right] +\alpha \left( JV\right)
T\left( JW\right) -\alpha \left( JW\right) T\left( JV\right)
\end{equation*}%
\begin{equation*}
\left[ V,W\right] _{\alpha }=\left[ V,W\right] +\alpha \left( V\right)
T\left( W\right) -\alpha \left( W\right) T\left( V\right)
\end{equation*}%
\begin{equation*}
J\left[ JV,W\right] _{\alpha }=J\left[ JV,W\right] +J\left( \alpha \left(
JV\right) T\left( W\right) -\alpha \left( W\right) T\left( JV\right) \right)
\end{equation*}%
\begin{equation*}
J\left[ V,JW\right] _{\alpha }=J\left[ V,JW\right] +J\left( \alpha \left(
V\right) T\left( JW\right) -\alpha \left( JW\right) T\left( V\right) \right)
\end{equation*}%
so%
\begin{eqnarray*}
N_{J}^{\alpha }\left( V,W\right) &=&N_{J}\left( V,W\right) -\alpha \left(
V\right) \left( T\left( W\right) +JT\left( JW\right) \right) +\alpha \left(
W\right) \left( T\left( V\right) +JT\left( JV\right) \right) \\
&&-J\alpha \left( JV\right) \left( T\left( W\right) +JT\left( JW\right)
\right) +J\alpha \left( JW\right) \left( T\left( V\right) +JT\left(
JV\right) \right) \\
&=&N_{J}\left( V,W\right) -2\left( \alpha \left( V\right) HW-\alpha \left(
W\right) HV+\alpha \left( JV\right) JHW-\alpha \left( JW\right) JHV\right) .
\end{eqnarray*}%
Since $N_{J}=0$ and%
\begin{eqnarray*}
\left( \alpha ^{0,1}\wedge H\right) \left( V,W\right) &=&\frac{1}{2}\left(
\alpha \left( V\right) +i\alpha \left( JV\right) \right) HW-\frac{1}{2}%
\left( \alpha \left( W\right) +i\alpha \left( JW\right) \right) HV \\
&=&\frac{1}{2}\left( \alpha \left( V\right) HW+\alpha \left( JV\right)
JHW-\alpha \left( W\right) HV-\alpha \left( JW\right) \right) JHV,
\end{eqnarray*}
the Corollary follows.
\end{proof}

\begin{lemma}
\label{Rel N Jtilda alfa N J alfa}Suppose that $\alpha $ is close to $0$ and
let $J_{\alpha }$ be a complex structure on $\xi _{\alpha }$. Set $%
\widetilde{J_{\alpha }}=\omega _{\alpha }^{-1}J_{\alpha }\omega _{\alpha }$
and let $S_{\alpha }~$\ be the unique form in $\Lambda _{J}^{0,1}\left( \xi
\right) \otimes \xi $ such that the complex structure $\widetilde{J_{\alpha }%
}$ on $\xi $ is given by $\widetilde{J_{\alpha }}=\left( I+S_{\alpha
}\right) J\left( I+S_{\alpha }\right) ^{-1}$. Then $N_{J_{\alpha }}=0$ if
and only if $N_{\widetilde{J_{\alpha }}}^{\alpha }=0$.
\end{lemma}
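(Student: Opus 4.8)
The statement asks to show that the vanishing of the Nijenhuis tensor for the deformed complex structure $J_\alpha$ on the deformed distribution $\xi_\alpha$ is equivalent to the vanishing of the Nijenhuis tensor $N^\alpha_{\widetilde{J_\alpha}}$ computed with respect to the transported bracket $[\cdot,\cdot]_\alpha$ on $\xi$. The natural approach is to exploit the fact that $\omega_\alpha$ is, by construction (see the Lemma following the definition of $[\cdot,\cdot]_\alpha$ and Definition of $\omega_\alpha$), an isomorphism of Lie algebras from $(\mathcal{H}(\xi), [\cdot,\cdot]_\alpha)$ onto $(\mathcal{H}(\xi_\alpha), [\cdot,\cdot])$ which additionally intertwines the complex structures: $\widetilde{J_\alpha} = \omega_\alpha^{-1} J_\alpha \omega_\alpha$ by definition. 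The key point is that the Nijenhuis tensor is a natural object: if $\Phi$ is a Lie-algebra isomorphism between two complex Lie algebras of derivation type that conjugates one complex structure into the other, then it conjugates the corresponding Nijenhuis tensors. Hence $N^\alpha_{\widetilde{J_\alpha}}$ and $N_{J_\alpha}$ correspond to each other under $\omega_\alpha$, and since $\omega_\alpha$ is a bundle isomorphism, one vanishes iff the other does.

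First I would spell out the naturality computation explicitly. Writing $\widetilde J := \widetilde{J_\alpha}$, $J' := J_\alpha$, $\omega := \omega_\alpha$ and $[\cdot,\cdot]' := [\cdot,\cdot]_\alpha$, I compute for $V, W \in \xi$:
\begin{equation*}
N^{\alpha}_{\widetilde J}(V,W) = [\widetilde J V, \widetilde J W]' - [V,W]' - \widetilde J[\widetilde J V, W]' - \widetilde J[V, \widetilde J W]'.
\end{equation*}
Now substitute $[\cdot,\cdot]' = \omega^{-1}[\omega(\cdot),\omega(\cdot)]$ and $\widetilde J = \omega^{-1} J' \omega$ everywhere. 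Each term of the form $[\widetilde J V, \widetilde J W]'$ becomes $\omega^{-1}[\omega\widetilde J V, \omega \widetilde J W] = \omega^{-1}[J'\omega V, J'\omega W]$, and a prefactor $\widetilde J$ becomes $\omega^{-1} J' \omega$ which, applied to $\omega^{-1}[\cdots]$, collapses to $\omega^{-1} J'[\cdots]$. Collecting,
\begin{equation*}
N^{\alpha}_{\widetilde J}(V,W) = \omega^{-1}\Bigl( [J'\omega V, J'\omega W] - [\omega V, \omega W] - J'[J'\omega V, \omega W] - J'[\omega V, J'\omega W] \Bigr) = \omega^{-1}\, N_{J'}\bigl(\omega V, \omega W\bigr).
\end{equation*}
Since $\omega = \omega_\alpha$ restricts to a linear isomorphism $\xi \to \xi_\alpha$ (part 1 of the Lemma after the definition of $[\cdot,\cdot]_\alpha$), and $V, W$ range over all of $\xi$ while $\omega V, \omega W$ range over all of $\xi_\alpha$, the tensor $N^\alpha_{\widetilde J}$ vanishes identically on $\xi$ if and only if $N_{J'} = N_{J_\alpha}$ vanishes identically on $\xi_\alpha$.

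The only genuine subtlety — and the step I expect to need the most care — is checking that $\omega_\alpha$ really is a Lie-algebra homomorphism for the pair $([\cdot,\cdot]_\alpha, [\cdot,\cdot])$, i.e. that $\omega_\alpha[V,W]_\alpha = [\omega_\alpha V, \omega_\alpha W]$ for $V,W \in \xi$; but this is immediate from the very definition $[V,W]_\alpha = \omega_\alpha^{-1}[\omega_\alpha V, \omega_\alpha W]$, so in fact no new computation is required beyond the algebraic bookkeeping above. One should also note that $J_\alpha$ and $\widetilde{J_\alpha}$ are honest complex structures on $\xi_\alpha$ and $\xi$ respectively, so that the expressions $N_{J_\alpha}$ and $N^\alpha_{\widetilde{J_\alpha}}$ are well-defined as the Nijenhuis tensors of complex Lie algebras of derivation type (using part 2 of the same Lemma, applied to $\xi_\alpha$ with bracket $[\cdot,\cdot]$, and to $\xi$ with bracket $[\cdot,\cdot]_\alpha$). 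With these remarks in place, the displayed identity $N^{\alpha}_{\widetilde{J_\alpha}}(V,W) = \omega_\alpha^{-1} N_{J_\alpha}(\omega_\alpha V, \omega_\alpha W)$ gives the equivalence at once.
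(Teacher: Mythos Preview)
Your proof is correct and follows essentially the same approach as the paper: both arguments exploit the naturality of the Nijenhuis tensor under the Lie-algebra isomorphism $\omega_\alpha:(\mathcal{H}(\xi),[\cdot,\cdot]_\alpha)\to(\mathcal{H}(\xi_\alpha),[\cdot,\cdot])$ intertwining $\widetilde{J_\alpha}$ and $J_\alpha$, obtaining the identity $N^{\alpha}_{\widetilde{J_\alpha}}(V,W)=\omega_\alpha^{-1}N_{J_\alpha}(\omega_\alpha V,\omega_\alpha W)$ and concluding. Your write-up is in fact more explicit than the paper's one-line version (which states the identity without derivation and, incidentally, omits the outer $\omega_\alpha^{-1}$ on the right-hand side, though this is harmless for the equivalence).
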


\begin{proof}
For every $V,W\in \xi _{\alpha }$ we have 
\begin{equation*}
N_{\widetilde{J_{\alpha }}}^{\alpha }\left( \omega _{\alpha }^{-1}V,\omega
_{\alpha }^{-1}W\right) =N_{J_{\alpha }}\left( V,W\right)
\end{equation*}%
and the Lemma follows.
\end{proof}

From Lemma \ref{Ker(gama+alfa) integrable}, Lemma \ref{Rel N Jtilda alfa N J
alfa} Corollary \ref{N Jtilda=0} and Lemma \ref{Rel N Jtilda alfa N J alfa}
we obtain

\begin{corollary}
\label{Eq def Levi flat}Let $J_{\alpha }$ a complex structure on $\xi
_{\alpha }$. We denote $S_{\alpha }$ the unique form in $\Lambda
_{J}^{0,1}\left( \xi \right) \otimes \xi $ such that $\omega _{\alpha
}^{-1}J_{\alpha }\omega _{\alpha }=\left( I+S_{\alpha }\right) J\left(
I+S_{\alpha }\right) ^{-1}$.The following are equivalent:

1) $\left( \xi _{\alpha },J_{\alpha }\right) $ is a Levi flat structure.

2) 
\begin{equation}
\delta \alpha +\frac{1}{2}\left\{ \alpha ,\alpha \right\} =0  \label{Eq alfa}
\end{equation}%
and%
\begin{equation}
\overline{\partial }_{J}^{\alpha }S_{\alpha }+\frac{1}{2}\left[ \left[
S_{\alpha },S_{\alpha }\right] \right] _{\alpha }=\frac{1}{4}N_{J}^{\alpha
}=-\alpha _{J}^{0,1}\wedge H  \label{Eq S}
\end{equation}%
where $\overline{\partial }_{J}^{\alpha }=\overline{\partial }_{J,\left[
\cdot ,\cdot \right] _{\alpha }}$ and%
\begin{equation*}
\left[ \left[ S_{\alpha },S_{\alpha }\right] \right] _{\alpha }=\left[
S_{\alpha },S_{\alpha }\right] _{\alpha }-\frac{1}{4}S_{\alpha }\left(
N_{J}^{\alpha }-N_{J}^{\alpha }\left( S_{\alpha },S_{\alpha }\right) \right)
.
\end{equation*}
\end{corollary}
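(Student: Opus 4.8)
The plan is to derive the equivalence by concatenating the four results that precede the statement; no new computation is needed. First I would restrict to $\alpha$ close to $0$, so that $\gamma+\alpha$ is nowhere zero (hence $\xi_{\alpha}=\ker(\gamma+\alpha)$ is a genuine codimension one distribution and $\omega_{\alpha}$ is invertible with $\omega_{\alpha}(\xi)=\xi_{\alpha}$) and so that $\det(J+\widetilde{J_{\alpha}})\neq 0$; by Lemma \ref{rel complex structures} the endomorphism $S_{\alpha}$ is then well defined and satisfies $S_{\alpha}J+JS_{\alpha}=0$, i.e. $S_{\alpha}\in\Lambda_{J}^{0,1}(\xi)\otimes\xi$.

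Next I would unwind the definition of a Levi flat structure into two conditions: (a) the distribution $\xi_{\alpha}$ is integrable, and (b) $J_{\alpha}$ is an integrable complex structure on each leaf of the foliation defined by $\xi_{\alpha}$ (a condition that presupposes (a)), i.e. the Nijenhuis tensor $N_{J_{\alpha}}$ of $(\mathcal{H}(\xi_{\alpha}),[\cdot,\cdot],J_{\alpha})$ vanishes. Condition (a) is, by Lemma \ref{Ker(gama+alfa) integrable}, exactly the Maurer--Cartan equation (\ref{Eq alfa}). For (b): transporting everything by the $C^{\infty}(L)$-linear isomorphism $\omega_{\alpha}$, Lemma \ref{Rel N Jtilda alfa N J alfa} gives $N_{J_{\alpha}}=0$ if and only if $N_{\widetilde{J_{\alpha}}}^{\alpha}=0$, the latter Nijenhuis tensor being that of the complex $C^{\infty}(L)$-Lie algebra of derivation type $(\mathcal{H}(\xi)_{\alpha},[\cdot,\cdot]_{\alpha},\widetilde{J_{\alpha}})$; this triple is of derivation type because $(\mathcal{H}(\xi)_{\alpha},[\cdot,\cdot]_{\alpha},J)$ is (by the Lemma following the definition of $[\cdot,\cdot]_{\alpha}$) and $\widetilde{J_{\alpha}}=\omega_{\alpha}^{-1}J_{\alpha}\omega_{\alpha}$ is $C^{\infty}(L)$-linear.

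Then I would apply Corollary \ref{N Jtilda=0} to the pair $(\mathcal{H}(\xi)_{\alpha},[\cdot,\cdot]_{\alpha},J)$, $(\mathcal{H}(\xi)_{\alpha},[\cdot,\cdot]_{\alpha},\widetilde{J_{\alpha}})$ with intertwining endomorphism $S_{\alpha}$: it says $N_{\widetilde{J_{\alpha}}}^{\alpha}=0$ if and only if $\overline{\partial}_{J}^{\alpha}S_{\alpha}+\frac{1}{2}[[S_{\alpha},S_{\alpha}]]_{\alpha}=\frac{1}{4}N_{J}^{\alpha}$, with $\overline{\partial}_{J}^{\alpha}$ and $[[\cdot,\cdot]]_{\alpha}$ formed from $[\cdot,\cdot]_{\alpha}$ and $N_{J}^{\alpha}$ as in the statement. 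Finally Corollary \ref{Rel N alfa H} rewrites $\frac{1}{4}N_{J}^{\alpha}$ as $-\alpha_{J}^{0,1}\wedge H$, turning the previous identity into (\ref{Eq S}). Chaining the three equivalences: $(\xi_{\alpha},J_{\alpha})$ is a Levi flat structure $\iff$ (\ref{Eq alfa}) and (\ref{Eq S}) both hold, which is the claim.

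The only step that is not a mechanical concatenation of the cited results is the passage, inside (b), from ``$J_{\alpha}$ integrable along the leaves'' to ``$N_{J_{\alpha}}=0$''. This is the Newlander--Nirenberg theorem applied leafwise to the foliation defined by $\xi_{\alpha}$ (which is why (a) is needed for the leaves to exist), together with the observation that, since $\xi_{\alpha}$ is integrable, the Lie bracket of two sections of $\mathcal{H}(\xi_{\alpha})$ is again a section of $\mathcal{H}(\xi_{\alpha})$, so the derivation-type Nijenhuis tensor $N_{J_{\alpha}}$ restricts on each leaf to the classical Nijenhuis tensor of $J_{\alpha}$ there. I expect this to be the only genuinely substantive point; the algebraic bookkeeping is entirely carried out in Lemma \ref{Ker(gama+alfa) integrable}, Lemma \ref{Rel N Jtilda alfa N J alfa}, Corollary \ref{N Jtilda=0} and Corollary \ref{Rel N alfa H}.
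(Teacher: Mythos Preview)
Your proposal is correct and follows exactly the same route as the paper, which simply states that the corollary follows from Lemma~\ref{Ker(gama+alfa) integrable}, Lemma~\ref{Rel N Jtilda alfa N J alfa}, Corollary~\ref{N Jtilda=0} and Corollary~\ref{Rel N alfa H}. Your added remarks (the Newlander--Nirenberg identification of leafwise integrability with $N_{J_{\alpha}}=0$, and the verification that $(\mathcal{H}(\xi)_{\alpha},[\cdot,\cdot]_{\alpha},\widetilde{J_{\alpha}})$ is of derivation type) fill in details the paper leaves implicit but do not change the argument.
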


\section{Gauge equivalence}

We recall that the group action $\chi $ of $\mathcal{G=}Diff\left( L\right) $
on $\mathcal{Z}^{1}\left( L\right) $ is defined in (\ref{chi(fi)}) such that
for $\alpha \in \mathcal{Z}^{1}\left( L\right) $ and $\Phi \in \mathcal{G}$
the distribution $\xi _{\alpha }$ is integrable if and only if the
distribution $\xi _{\chi \left( \Phi \right) \left( \alpha \right) }=\Phi
_{\ast }\xi _{\alpha }$ is integrable (Remark \ref{group action}). If $%
J_{\alpha }$ is a complex structure on $\xi _{\alpha }$, by Lemma \ref{rel
complex structures} we define a complex structure 
\begin{equation*}
J_{\chi \left( \Phi \right) \left( \alpha \right) }=\omega _{\chi \left(
\Phi \right) \left( \alpha \right) }\left( I+S_{\chi \left( \Phi \right)
\left( \alpha \right) }\right) J\left( I+S_{\chi \left( \Phi \right) \left(
\alpha \right) }\right) ^{-1}\omega _{\chi \left( \Phi \right) \left( \alpha
\right) }^{-1}
\end{equation*}%
on $\xi _{\chi \left( \Phi \right) \left( \alpha \right) }$ where%
\begin{equation}
S_{\chi \left( \Phi \right) \left( \alpha \right) }=\left( J-\omega _{\chi
\left( \Phi \right) \left( \alpha \right) }^{-1}\Phi _{\ast }J_{\alpha }\Phi
_{\ast }^{-1}\omega _{\chi \left( \Phi \right) \left( \alpha \right)
}\right) \left( J+\omega _{\chi \left( \Phi \right) \left( \alpha \right)
}^{-1}\Phi _{\ast }J_{\alpha }\Phi _{\ast }^{-1}\omega _{\chi \left( \Phi
\right) \left( \alpha \right) }\right) ^{-1}.  \label{S chi Fi alfa}
\end{equation}%
With these definitions we have the following:

\begin{lemma}
\label{Gauge equivalence}\bigskip Let $\Phi \in \mathcal{G}$. Then $\left(
\xi _{\alpha },J_{\alpha }\right) $ is a Levi flat structure if and only if $%
\left( \xi _{\chi \left( \Phi \right) \left( \alpha \right) },J_{\chi \left(
\Phi \right) \left( \alpha \right) }\right) $ is a Levi flat structure.
\end{lemma}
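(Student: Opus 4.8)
The plan is to recognize that the gauge action $\chi$ amounts, on the geometric object $\left(\xi_{\alpha},J_{\alpha}\right)$, to transport by the diffeomorphism $\Phi$, and then to invoke the fact that being a Levi flat structure is a diffeomorphism-invariant property. So I would split the proof into: (a) the distributional part, already in hand; (b) the identification of the transported complex structure; (c) a naturality argument for the integrability of the leafwise complex structure.

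First I would record the distributional part. By Remark \ref{group action}, $\xi_{\chi(\Phi)(\alpha)}=\Phi_{\ast}\xi_{\alpha}$, so $\xi_{\alpha}$ is integrable if and only if $\xi_{\chi(\Phi)(\alpha)}$ is, and $\Phi$ carries each leaf of $\xi_{\alpha}$ diffeomorphically onto a leaf of $\xi_{\chi(\Phi)(\alpha)}$ (at $\Phi(p)$ the tangent space of the image leaf is $d\Phi_{p}\!\left(\xi_{\alpha}\right)=\xi_{\chi(\Phi)(\alpha)}$).

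The crux is the identity, as endomorphisms of $\xi_{\chi(\Phi)(\alpha)}$,
\[
J_{\chi(\Phi)(\alpha)}=\Phi_{\ast}\,J_{\alpha}\,\Phi_{\ast}^{-1}.
\]
To obtain it I would set $\widetilde{J}=\omega_{\chi(\Phi)(\alpha)}^{-1}\,\Phi_{\ast}J_{\alpha}\Phi_{\ast}^{-1}\,\omega_{\chi(\Phi)(\alpha)}$. Since $\Phi_{\ast}$ maps $\xi_{\alpha}$ isomorphically onto $\xi_{\chi(\Phi)(\alpha)}$ and $\omega_{\chi(\Phi)(\alpha)}$ maps $\xi$ isomorphically onto $\xi_{\chi(\Phi)(\alpha)}$ (the defining property of $\omega$ recalled above), $\widetilde{J}$ is a complex structure on $\xi$; moreover $\det\left(J+\widetilde{J}\right)\neq 0$, since $J+\widetilde{J}=J+\omega_{\chi(\Phi)(\alpha)}^{-1}\Phi_{\ast}J_{\alpha}\Phi_{\ast}^{-1}\omega_{\chi(\Phi)(\alpha)}$ is exactly the operator already assumed invertible in the definition (\ref{S chi Fi alfa}). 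Comparing with (\ref{S chi Fi alfa}) we see $S_{\chi(\Phi)(\alpha)}=\left(J-\widetilde{J}\right)\left(J+\widetilde{J}\right)^{-1}$, so Lemma \ref{rel complex structures} gives $\widetilde{J}=\left(I+S_{\chi(\Phi)(\alpha)}\right)J\left(I+S_{\chi(\Phi)(\alpha)}\right)^{-1}$ with $S_{\chi(\Phi)(\alpha)}J+JS_{\chi(\Phi)(\alpha)}=0$. Inserting this into the definition of $J_{\chi(\Phi)(\alpha)}$ yields $J_{\chi(\Phi)(\alpha)}=\omega_{\chi(\Phi)(\alpha)}\,\widetilde{J}\,\omega_{\chi(\Phi)(\alpha)}^{-1}=\Phi_{\ast}J_{\alpha}\Phi_{\ast}^{-1}$.

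Finally, $\Phi$ being a diffeomorphism, $\Phi_{\ast}[V,W]=[\Phi_{\ast}V,\Phi_{\ast}W]$ for $V,W$ tangent to $\xi_{\alpha}$, so the Nijenhuis tensors $N_{J_{\alpha}}$ and $N_{J_{\chi(\Phi)(\alpha)}}$ (for the honest Lie bracket of vector fields tangent to $\xi_{\alpha}$, resp.\ $\xi_{\chi(\Phi)(\alpha)}$) are intertwined: $N_{J_{\chi(\Phi)(\alpha)}}\!\left(\Phi_{\ast}V,\Phi_{\ast}W\right)=\Phi_{\ast}N_{J_{\alpha}}(V,W)$, hence $N_{J_{\alpha}}=0$ iff $N_{J_{\chi(\Phi)(\alpha)}}=0$. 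By Corollary \ref{Eq def Levi flat} (equivalently, by the Newlander--Nirenberg theorem applied leafwise), $\left(\xi_{\alpha},J_{\alpha}\right)$ is a Levi flat structure exactly when $\xi_{\alpha}$ is integrable and $N_{J_{\alpha}}=0$; combining this with the two facts just displayed shows that $\left(\xi_{\alpha},J_{\alpha}\right)$ is a Levi flat structure if and only if $\left(\xi_{\chi(\Phi)(\alpha)},J_{\chi(\Phi)(\alpha)}\right)$ is. I expect the only genuine work to be the displayed identity $J_{\chi(\Phi)(\alpha)}=\Phi_{\ast}J_{\alpha}\Phi_{\ast}^{-1}$; everything else is naturality of the Lie bracket under diffeomorphisms together with results already established.
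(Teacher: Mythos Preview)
Your proof is correct and follows essentially the same route as the paper: both identify $J_{\chi(\Phi)(\alpha)}$ as the $\Phi$-transport of $J_{\alpha}$ (you do this directly from the definition of $S_{\chi(\Phi)(\alpha)}$ via Lemma~\ref{rel complex structures}, the paper via Lemma~\ref{Rel N Jtilda alfa N J alfa}), and then invoke naturality of the Nijenhuis tensor under diffeomorphisms. One small caution: the paper's convention is $\xi_{\chi(\Phi)(\alpha)}=\Phi^{\ast}\xi_{\alpha}$, i.e.\ $\Phi_{\ast}$ sends $\xi_{\chi(\Phi)(\alpha)}$ to $\xi_{\alpha}$ (see the proof in the paper), so your direction of transport is reversed, though this does not affect the argument.
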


\begin{proof}
By Lemma \ref{Rel N Jtilda alfa N J alfa} $N_{J_{\alpha }}=0$ if and only if 
$N_{\widetilde{J_{\alpha }}}^{\alpha }=0$ and \ $N_{J_{\chi \left( \Phi
\right) \left( \alpha \right) }}=0$ if and only if $N_{\widetilde{J\chi
\left( \Phi \right) \left( \alpha \right) }}^{\alpha }=0$, where $\widetilde{%
J_{\alpha }}=\omega _{\alpha }^{-1}J_{\alpha }\omega _{\alpha }$ and $\ 
\widetilde{J\chi \left( \Phi \right) \left( \alpha \right) }=\omega _{\chi
\left( \Phi \right) \left( \alpha \right) }^{-1}\Phi _{\ast }^{-1}J_{\alpha
}\Phi _{\ast }\omega _{\chi \left( \Phi \right) \left( \alpha \right) }$.
But $\ \omega _{\alpha }\left( \xi \right) =\xi _{\alpha }=\Phi _{\ast
}\left( \xi _{\chi \left( \Phi \right) \left( \alpha \right) }\right) $ and
the Lemma follows.
\end{proof}

From Lemma \ref{Ker(gama+alfa) integrable}, Corollary \ref{Eq def Levi flat}
and Lemma \ref{Gauge equivalence} we obtain

\begin{corollary}
\label{Eq moduli}Set 
\begin{equation*}
\mathfrak{MC}_{\delta ,\mathcal{LF}}\left( L\right) =\left\{ \left( \alpha
,S\right) \in \mathcal{Z}^{1}\left( L\right) \times \left( \Lambda
_{J}^{0,1}\left( \xi \right) \otimes \xi \right) :\ \delta a+\frac{1}{2}%
\left\{ \alpha ,\alpha \right\} =0,\ \overline{\partial }_{J}^{\alpha }S+%
\frac{1}{2}\left[ \left[ S,S\right] \right] _{\alpha }=\ -\alpha
_{J}^{0,1}\wedge H\right\} .
\end{equation*}%
Then the moduli space of deformations of Levi flat structures of $\left( \xi
,J\right) $ is 
\begin{equation*}
\mathfrak{MC}_{\mathcal{LF}}\left( \xi ,J,L\right) /\thicksim _{\mathcal{G}}
\end{equation*}%
where $\left( \alpha ,S\right) \thicksim _{\mathcal{G}}\left( \beta
,Q\right) $ if there exists $\Phi \in \mathcal{G}$ such that $\beta =\chi
\left( \Phi \right) \left( \alpha \right) $ and $Q=S_{\beta }$.
\end{corollary}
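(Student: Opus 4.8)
The plan is to establish a dictionary between (a neighborhood of the origin in) $\mathfrak{MC}_{\delta ,\mathcal{LF}}\left( L\right) $ and the Levi flat structures on $L$ near $\left( \xi ,J\right) $, to check that this dictionary carries the relation $\thicksim _{\mathcal{G}}$ into the equivalence ``related by a diffeomorphism of $L$'', and then to pass to quotients. The three results quoted just before the statement do the bulk of the work: Lemma~\ref{Ker(gama+alfa) integrable} controls integrability of the deformed distribution, Corollary~\ref{Eq def Levi flat} controls the Levi flat condition, and Lemma~\ref{Gauge equivalence} controls compatibility with the group action. What remains is to make the parametrization precise.

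First I would fix the parametrization $\left( \alpha ,S\right) \mapsto \left( \xi _{\alpha },J_{\alpha }\right) $. As already used in Proposition~\ref{moduli 1 foliations} and Definition~\ref{Def group action}, any codimension~$1$ distribution $\eta $ on $L$ close to $\xi $ is the kernel of a $1$-form close to $\gamma $; after rescaling that form so that it takes the value $1$ on $X$ and subtracting $\gamma $, one obtains a unique $\alpha \in \mathcal{Z}^{1}\left( L\right) $ close to $0$ with $\eta =\xi _{\alpha }=\ker \left( \gamma +\alpha \right) $. If $K$ is a complex structure on $\xi _{\alpha }$ close to $J$, transport it to $\xi $ by $\widetilde{K}=\omega _{\alpha }^{-1}K\omega _{\alpha }$ (recall $\omega _{\alpha }\left( \xi \right) =\xi _{\alpha }$); since $\widetilde{K}$ is close to $J$ we have $\det \left( J+\widetilde{K}\right) \neq 0$, so Lemma~\ref{rel complex structures} provides a unique $S\in End_{\mathbb{R}}\left( \xi \right) $ with $\widetilde{K}=\left( I+S\right) J\left( I+S\right) ^{-1}$ and $SJ+JS=0$, the anticommutation relation being exactly the condition $S\in \Lambda _{J}^{0,1}\left( \xi \right) \otimes \xi $. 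This $S$ is the form $S_{\alpha }$ of Corollary~\ref{Eq def Levi flat}, and the assignment $\left( \alpha ,S\right) \mapsto \left( \xi _{\alpha },J_{\alpha }\right) $ is a bijection onto the set of Levi flat structures near $\left( \xi ,J\right) $ once we identify which pairs produce Levi flat structures.

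That identification is precisely the content of the cited results. By Lemma~\ref{Ker(gama+alfa) integrable}, $\xi _{\alpha }$ is integrable if and only if $\delta \alpha +\frac{1}{2}\left\{ \alpha ,\alpha \right\} =0$; and then, by Corollary~\ref{Eq def Levi flat}, $\left( \xi _{\alpha },J_{\alpha }\right) $ is a Levi flat structure if and only if in addition $\overline{\partial }_{J}^{\alpha }S+\frac{1}{2}\left[ \left[ S,S\right] \right] _{\alpha }=-\alpha _{J}^{0,1}\wedge H$, i.e. if and only if $\left( \alpha ,S\right) \in \mathfrak{MC}_{\delta ,\mathcal{LF}}\left( L\right) $. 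Finally, for $\Phi \in \mathcal{G}$ one has $\xi _{\chi \left( \Phi \right) \left( \alpha \right) }=\Phi ^{\ast }\xi _{\alpha }$ by Remark~\ref{group action}, and $J_{\chi \left( \Phi \right) \left( \alpha \right) }$ was defined, through $\left( \ref{S chi Fi alfa}\right) $, to be the complex structure obtained by transporting $J_{\alpha }$ by $\Phi $, with associated form $S_{\chi \left( \Phi \right) \left( \alpha \right) }$; hence the relation ``$\beta =\chi \left( \Phi \right) \left( \alpha \right) $ and $Q=S_{\beta }$'' says exactly that $\Phi $ carries $\left( \xi _{\alpha },J_{\alpha }\right) $ to $\left( \xi _{\beta },J_{\beta }\right) $, and Lemma~\ref{Gauge equivalence} guarantees that this relation preserves $\mathfrak{MC}_{\delta ,\mathcal{LF}}\left( L\right) $. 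Passing to the quotient, the moduli space of deformations of the Levi flat structure $\left( \xi ,J\right) $ is $\mathfrak{MC}_{\delta ,\mathcal{LF}}\left( L\right) /\thicksim _{\mathcal{G}}$, which is the asserted $\mathfrak{MC}_{\mathcal{LF}}\left( \xi ,J,L\right) /\thicksim _{\mathcal{G}}$.

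I expect the only real obstacle to be the bookkeeping in the parametrization step: verifying that the pair $\left( \alpha ,S\right) $ attached to a nearby Levi flat structure is well defined and unique, and that the dictionary is equivariant for $\mathcal{G}=Diff\left( L\right) $. The analytic substance --- the equivalence between the two Maurer-Cartan-type equations and the integrability of $\left( \xi _{\alpha },J_{\alpha }\right) $ --- is already packaged in Corollary~\ref{Eq def Levi flat}, and the gauge compatibility in Lemma~\ref{Gauge equivalence}, so beyond the parametrization the argument is essentially formal.
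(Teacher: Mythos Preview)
Your proposal is correct and matches the paper's approach: the paper gives no argument beyond the sentence ``From Lemma~\ref{Ker(gama+alfa) integrable}, Corollary~\ref{Eq def Levi flat} and Lemma~\ref{Gauge equivalence} we obtain'', and your write-up simply unpacks this, supplying the parametrization $(\alpha,S)\mapsto(\xi_\alpha,J_\alpha)$ via $\omega_\alpha$ and Lemma~\ref{rel complex structures} exactly as in the discussion preceding Corollary~\ref{Eq def Levi flat} and the definition of $S_{\chi(\Phi)(\alpha)}$ in~\eqref{S chi Fi alfa}. Nothing is missing.
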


\begin{lemma}
\label{dS chi Fi}Let $Y$ be a vector field on $L$ and $\Phi ^{Y}$ the flow
of $Y$. Then%
\begin{equation*}
\frac{dS_{\chi \left( \Phi _{t}^{Y}\right) \left( 0\right) }}{dt}%
_{\left\vert t=0\right. }=-H_{Y}.
\end{equation*}%
where $H_{Y}\in \Lambda ^{0,1}\left( \xi \right) \otimes \xi $ is defined in
(\ref{HY1}).
\end{lemma}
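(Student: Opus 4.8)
The plan is to reduce the derivative of $S_{\chi(\Phi_t^Y)(0)}$ to the derivative of a single conjugating automorphism of $\xi$, and then to identify that derivative with $-T_Y$ using the Frobenius identity. First I set $\beta_t:=\chi(\Phi_t^Y)(0)\in\mathcal{Z}^1(L)$, so that $\beta_0=0$ and, by Lemma \ref{d/dt(chi)=delta}, $\dot\beta_0:=\frac{d\beta_t}{dt}|_{t=0}=-\delta(\iota_Y\gamma)$. Since $\omega_{\beta_0}=I$ and $(\Phi_0^Y)_*=I$, the endomorphism $Q_t:=(\Phi_t^Y)_*^{-1}\omega_{\beta_t}$ of $\xi$ satisfies $Q_0=I$, and formula (\ref{S chi Fi alfa}), taken with $\alpha=0$ and $J_0=J$, becomes $S_{\beta_t}=(J-\Lambda_t)(J+\Lambda_t)^{-1}$ with $\Lambda_t:=Q_t^{-1}JQ_t$; note $\Lambda_0=J$ and $S_{\beta_0}=0$.

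Next I differentiate the Cayley-type relation $S_{\beta_t}(J+\Lambda_t)=J-\Lambda_t$ at $t=0$: using $S_{\beta_0}=0$ and $\Lambda_0=J$ this gives $2\dot S_0 J=-\dot\Lambda_0$, hence $\dot S_0=\tfrac{1}{2}\dot\Lambda_0 J$. Because $Q_0=I$ we have $\frac{d}{dt}Q_t^{-1}|_{t=0}=-\dot Q_0$, so $\dot\Lambda_0=J\dot Q_0-\dot Q_0 J$, and using $J^2=-I$ this yields $\dot S_0=\tfrac{1}{2}(\dot Q_0+J\dot Q_0 J)$, i.e. $\dot S_0(V)=\tfrac{1}{2}(\dot Q_0(V)+J\dot Q_0(JV))$ for $V\in\xi$.

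It then remains to compute $\dot Q_0$. By the product rule, both factors being the identity at $t=0$, $\dot Q_0=\frac{d}{dt}(\Phi_t^Y)_*^{-1}|_{t=0}+\frac{d}{dt}\omega_{\beta_t}|_{t=0}$; the first summand is the Lie derivative $V\mapsto[Y,V]$, and from $\omega_{\beta_t}(V)=V-\beta_t(V)X$ the second is $V\mapsto-\dot\beta_0(V)X$, so $\dot Q_0(V)=[Y,V]-\dot\beta_0(V)X$ for $V\in\xi$. The crucial step is to rewrite $\dot\beta_0=-\delta(\iota_Y\gamma)$ on $\xi$: for $V\in\xi$, using $\mathcal{L}_X\gamma=\iota_Xd\gamma$ together with the Frobenius identity $d\gamma=-\iota_Xd\gamma\wedge\gamma$ of Lemma \ref{Frobenius} iv), one finds $\delta(\iota_Y\gamma)(V)=V(\gamma(Y))+\gamma(Y)\,d\gamma(X,V)=-\gamma([Y,V])$, whence $\dot\beta_0(V)=\gamma([Y,V])$ and $\dot Q_0(V)=[Y,V]-\gamma([Y,V])X=-T_Y(V)$ by (\ref{TY}). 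Substituting into $\dot S_0(V)=\tfrac{1}{2}(\dot Q_0(V)+J\dot Q_0(JV))$ and recalling (\ref{HY1}) gives $\dot S_0(V)=-\tfrac{1}{2}(T_Y(V)+JT_Y(JV))=-H_Y(V)$, as required.

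I expect the main obstacle to be the identity $-\delta(\iota_Y\gamma)|_\xi=\gamma([Y,\cdot])$, which is the only place where integrability of $\xi$ intervenes; once it is in hand everything else is formal. A secondary point demanding care is keeping track of the conventions implicit in (\ref{S chi Fi alfa}) --- pushforward versus pullback, and the sign of $\mathcal{L}_Y$ --- so that they stay consistent with Lemma \ref{d/dt(chi)=delta} and the answer comes out $-H_Y$ rather than $+H_Y$. A useful internal check is that each $Q_t$ preserves $\xi$, so $\dot Q_0(V)$ automatically lies in $\xi$; this forces the $X$-component of $[Y,V]$ to equal $\dot\beta_0(V)$, which is exactly the identity above.
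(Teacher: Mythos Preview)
Your proof is correct and follows essentially the same route as the paper: differentiate the Cayley-type formula for $S_{\beta_t}$, apply the product rule to the conjugating map, and reduce everything via the Frobenius identity $d\gamma=-\iota_Xd\gamma\wedge\gamma$ to the relation $\delta(\iota_Y\gamma)(V)=\gamma([V,Y])$ for $V\in\xi$, which identifies $\dot Q_0$ with $-T_Y$. Your packaging with the intermediate objects $Q_t$ and $\Lambda_t$ is a bit cleaner than the paper's expanded computation, but the key steps and the single non-formal ingredient (the identity for $\delta(\iota_Y\gamma)$ on $\xi$) are identical.
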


\begin{proof}
Let $V\in \xi $. By (\ref{S chi Fi alfa}) 
\begin{eqnarray}
&&\frac{dS_{\chi \left( \Phi _{t}^{Y}\right) \left( 0\right) }}{dt}%
_{\left\vert t=0\right. }\left( V\right)  \notag \\
&=&\frac{d}{dt}_{\left\vert t=0\right. }\left( J-\omega _{\chi \left( \Phi
_{t}^{Y}\right) \left( 0\right) }^{-1}\left( \Phi _{t}^{Y}\right) _{\ast
}J\left( \Phi _{t}^{Y}\right) _{\ast }^{-1}\omega _{\chi \left( \Phi
_{t}^{Y}\right) \left( 0\right) }\right) \left( J+\omega _{\chi \left( \Phi
_{t}^{Y}\right) \left( 0\right) }^{-1}\left( \Phi _{t}^{Y}\right) _{\ast
}J\left( \Phi _{t}^{Y}\right) _{\ast }^{-1}\omega _{\chi \left( \Phi
_{t}^{Y}\right) \left( 0\right) }\right) ^{-1}  \notag \\
&=&\frac{1}{2}\left( \frac{d}{dt}_{\left\vert t=0\right. }\omega _{\chi
\left( \Phi _{t}^{Y}\right) \left( 0\right) }^{-1}\left( \Phi
_{t}^{Y}\right) _{\ast }J\left( \Phi _{t}^{Y}\right) _{\ast }^{-1}\omega
_{\chi \left( \Phi _{t}^{Y}\right) \left( 0\right) }\right) \left( JV\right)
\notag \\
&&  \label{d S chi t}
\end{eqnarray}

Since%
\begin{equation*}
\frac{d}{dt}_{\left\vert t=0\right. }\omega _{\chi \left( \Phi
_{t}^{Y}\right) \left( 0\right) }=-\frac{d}{dt}_{\left\vert t=0\right.
}\left( \chi \left( \Phi _{t}^{Y}\right) \left( 0\right) \right) X,
\end{equation*}%
Lemma \ref{d/dt(chi)=delta} gives%
\begin{equation*}
\frac{d}{dt}_{\left\vert t=0\right. }\omega _{\chi \left( \Phi
_{t}^{Y}\right) \left( 0\right) }=\delta \left( \iota _{Y}\gamma \right) X.
\end{equation*}%
It follows that%
\begin{eqnarray}
&&\left( \frac{d}{dt}_{\left\vert t=0\right. }\left( \omega _{\chi \left(
\Phi _{t}^{Y}\right) \left( 0\right) }^{-1}\left( \Phi _{t}^{Y}\right)
_{\ast }J\left( \Phi _{t}^{Y}\right) _{\ast }^{-1}\omega _{\chi \left( \Phi
_{t}^{Y}\right) \left( 0\right) }\right) \right) \left( V\right)  \notag \\
&=&\frac{d}{dt}_{\left\vert t=0\right. }\omega _{\chi \left( \Phi
_{t}^{Y}\right) \left( 0\right) }^{-1}\left( JV\right) +\frac{d}{dt}%
_{\left\vert t=0\right. }\left( \left( \Phi _{t}^{Y}\right) _{\ast }\right)
\left( JV\right) +J\left( \frac{d}{dt}_{\left\vert t=0\right. }\left( \Phi
_{t}^{Y}\right) _{\ast }^{-1}\left( V\right) +\frac{d}{dt}_{\left\vert
t=0\right. }\omega _{\chi \left( \Phi _{t}^{Y}\right) \left( 0\right)
}\left( V\right) \right)  \notag \\
&=&-\left( \delta \left( \iota _{Y}\gamma \right) \left( JV\right) \right) X-%
\mathcal{L}_{Y}\left( JV\right) +J\left( \mathcal{L}_{Y}\left( V\right)
+\left( \delta \left( \iota _{Y}\gamma \right) \left( V\right) \right)
X\right)  \notag \\
&&  \label{dSchit(V)}
\end{eqnarray}

By (\ref{B}) we have 
\begin{equation}
\delta \left( \iota _{Y}\gamma \right) \left( V\right) =\left( d\iota
_{Y}\gamma +\left( \iota _{Y}\gamma \right) \iota _{X}d\gamma -X\left( \iota
_{Y}\gamma \right) \gamma \right) \left( V\right) =V\left( \gamma \left(
Y\right) \right) +\gamma \left( Y\right) d\gamma \left( X,V\right) .
\label{delta iY(gama)}
\end{equation}%
Since $d\gamma =-\iota _{X}d\gamma \wedge \gamma $ we have

\begin{eqnarray*}
V\left( \gamma \left( Y\right) \right) &=&d\gamma \left( V,Y\right) +\gamma
\left( \left[ V,Y\right] \right) =\left( -\iota _{X}d\gamma \wedge \gamma
\right) \left( V,Y\right) +\gamma \left( \left[ V,Y\right] \right) \\
&=&-\gamma \left( Y\right) \left( \iota _{X}d\gamma \right) \left( V\right)
+\gamma \left( \left[ V,Y\right] \right) =-\gamma \left( Y\right) d\gamma
\left( X,V\right) +\gamma \left( \left[ V,Y\right] \right)
\end{eqnarray*}%
and by (\ref{delta iY(gama)} ) it follows that 
\begin{equation}
\delta \left( \iota _{Y}\gamma \right) \left( V\right) =\gamma \left( \left[
V,Y\right] \right) .  \label{delta y = ( )}
\end{equation}%
By replacing (\ref{delta y = ( )}) in (\ref{dSchit(V)}), we obtain%
\begin{eqnarray*}
&&\left( \frac{d}{dt}_{\left\vert t=0\right. }\left( \omega _{\chi \left(
\Phi _{t}^{Y}\right) \left( 0\right) }^{-1}\left( \Phi _{t}^{Y}\right)
_{\ast }^{-1}J\left( \Phi _{t}^{Y}\right) _{\ast }\omega _{\chi \left( \Phi
_{t}^{Y}\right) \left( 0\right) }\right) \right) \left( V\right) \\
&=&-\gamma \left( \left[ JV,Y\right] \right) X+\left[ JV,Y\right] +J\left( -%
\left[ V,Y\right] +\gamma \left( \left[ V,Y\right] \right) X\right)
\end{eqnarray*}%
We conclude now by (\ref{d S chi t}) and (\ref{HY1}): 
\begin{equation*}
\frac{dS_{\chi \left( \Phi _{t}^{Y}\right) \left( 0\right) }}{dt}%
_{\left\vert t=0\right. }\left( V\right) =\frac{1}{2}\left( -\left[ V,Y%
\right] +\gamma \left( \left[ V,Y\right] \right) X+J\left( -\left[ JV,Y%
\right] +\gamma \left( \left[ JV,Y\right] \right) X\right) \right) =-H_{Y}.
\end{equation*}
\end{proof}

\begin{lemma}
\label{Rel H Y H}Let $Y\in \mathcal{H}\left( L\right) $.\ Then$\ $%
\begin{equation*}
H_{Y}=\overline{\partial }\left( Y-\gamma \left( Y\right) X\right) +\gamma
\left( Y\right) H.
\end{equation*}
\end{lemma}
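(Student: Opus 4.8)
The plan is to reduce the identity to the two cases that are already understood: the case $Y\in\xi$ (Remark \ref{Rem H} ii), where $H_Y=\overline{\partial}Y$) and the case $Y=X$ (Definition \ref{TX and HX}, where $H_X=H$). First I would split $Y=U+\gamma(Y)X$ with $U=Y-\gamma(Y)X$; since $\gamma(X)=1$ one gets $\gamma(U)=\gamma(Y)-\gamma(Y)=0$, so $U\in\mathcal{H}(\xi)$, and $\gamma(Y)\in C^{\infty}(L)$.

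Next I would record two elementary facts about the operator $T_Y$ of Definition \ref{TY and HY}. (a) $T_Y$ is additive in $Y$: from $T_Y(V)=[V,Y]-\gamma([V,Y])X$ and bilinearity of the Lie bracket, $T_{Y_1+Y_2}=T_{Y_1}+T_{Y_2}$; hence $T_Y=T_U+T_{\gamma(Y)X}$. (b) For $f\in C^{\infty}(L)$ and any $V$, $T_{fX}(V)=fT(V)$: indeed $[V,fX]=f[V,X]+V(f)X$, so $\gamma([V,fX])=f\gamma([V,X])+V(f)$, and the two $V(f)X$ terms cancel, leaving $T_{fX}(V)=f[V,X]-f\gamma([V,X])X=fT_X(V)$. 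Finally, for $V\in\xi$ and $U\in\xi$, integrability of $\xi$ gives $[V,U]\in\xi$, hence $\gamma([V,U])=0$ and $T_U(V)=[V,U]$; likewise $T_U(JV)=[JV,U]$ since $JV\in\xi$.

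Now take $V\in\xi$, so $JV\in\xi$ as well, and combine these:
\[
H_Y(V)=\tfrac12\bigl(T_Y(V)+JT_Y(JV)\bigr)=\tfrac12\bigl([V,U]+J[JV,U]\bigr)+\gamma(Y)\cdot\tfrac12\bigl(T(V)+JT(JV)\bigr).
\]
The second summand is $\gamma(Y)H(V)$ by Definition \ref{TX and HX}. For the first summand, observe that $\tfrac12\bigl([V,U]+J[JV,U]\bigr)=\tfrac12\bigl(T_U(V)+JT_U(JV)\bigr)=H_U(V)$; since $U\in\xi$, Remark \ref{Rem H} ii) gives $H_U=\overline{\partial}U$, i.e. $H_U(V)=(\overline{\partial}U)(V)$ — this is exactly where the Levi flat hypothesis enters, through $N_J=0$, which collapses $(\overline{\partial}U)(V)$ to $\tfrac12([V,U]+J[JV,U])$. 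Assembling, $H_Y(V)=(\overline{\partial}U)(V)+\gamma(Y)H(V)$ for all $V\in\xi$, which is the asserted equality $H_Y=\overline{\partial}(Y-\gamma(Y)X)+\gamma(Y)H$.

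All the computations are routine bookkeeping; the one step I would double-check most carefully is fact (b), the cancellation showing $T_{\gamma(Y)X}(V)=\gamma(Y)T(V)$ — i.e. that the inhomogeneous term $V(\gamma(Y))X$ coming from the Leibniz rule for the bracket is precisely annihilated by the correction $-\gamma([V,\gamma(Y)X])X$ built into the definition of $T_Y$. Everything else follows from additivity of $T_Y$, integrability of $\xi$, and the two already-proved special cases.
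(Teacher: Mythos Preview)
Your proof is correct. The route differs from the paper's: the paper computes $\overline{\partial}(Y-\gamma(Y)X)(V)$ directly by expanding the brackets $[V,Y-\gamma(Y)X]$ and $[JV,Y-\gamma(Y)X]$, which produces unwanted terms $V(\gamma(Y))X$ and $(JV)(\gamma(Y))X$; it then invokes the integrability identity $d\gamma=-\iota_X d\gamma\wedge\gamma$ to rewrite $V(\gamma(Y))=\gamma([V,Y])-\gamma(Y)\gamma([V,X])$, and only after adding $\gamma(Y)H(V)$ do all the $X$-pieces recombine into $H_Y(V)$. Your argument sidesteps this by first establishing the additivity $T_Y=T_U+T_{\gamma(Y)X}$ and the scalar identity $T_{fX}=fT$, so the cancellation of the $V(f)X$ term happens once and for all inside $T$ rather than being chased through the formula for $d\gamma$. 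The upshot is that you reduce immediately to the two known special cases $H_U=\overline{\partial}U$ and $H_X=H$, and never need the explicit expression for $V(\gamma(Y))$. Both arguments rely on the same ingredients (integrability of $\xi$ to get $\gamma([V,U])=0$, and $N_J=0$ to identify $H_U$ with $\overline{\partial}U$), but yours is more modular and a bit shorter.
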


\begin{proof}
Let $V\in \xi .\ $We have 
\begin{eqnarray}
\left( \overline{\partial }\left( Y-\gamma \left( Y\right) X\right) \right)
\left( V\right) &=&\frac{1}{2}\left( \left[ V,Y-\gamma \left( Y\right) X%
\right] +J\left[ JV,Y-\gamma \left( Y\right) X\right] \right)  \notag \\
&=&\frac{1}{2}\left( \left[ V,Y\right] -\gamma \left( Y\right) \left[ V,X%
\right] -V\left( \gamma \left( Y\right) \right) X\right)  \notag \\
&&+\frac{1}{2}J\left( \left[ JV,Y\right] -\gamma \left( Y\right) \left[ JV,X%
\right] -\left( JV\right) \left( \gamma \left( Y\right) \right) X\right)
\label{dbar(Y-gama(Y)X}
\end{eqnarray}%
But%
\begin{equation*}
d\gamma \left( V,Y\right) =V\left( \gamma \left( Y\right) \right) -\gamma 
\left[ V,Y\right]
\end{equation*}%
and by Lemma \ref{Frobenius} 
\begin{equation*}
d\gamma \left( V,Y\right) =\left( -\iota _{X}d\gamma \wedge \gamma \right)
\left( V,Y\right) =-\gamma \left( Y\right) \left( \iota _{X}d\gamma \right)
\left( V\right) =-\gamma \left( Y\right) d\gamma \left( X,V\right) =-\gamma
\left( Y\right) \gamma \left( \left[ V,X\right] \right) ,
\end{equation*}%
so%
\begin{equation}
V\left( \gamma \left( Y\right) \right) =\gamma \left[ V,Y\right] -\gamma
\left( Y\right) \gamma \left( \left[ V,X\right] \right)  \label{V(gamaY)}
\end{equation}%
and similarly%
\begin{equation}
\left( JV\right) \left( \gamma \left( Y\right) \right) =\gamma \left( \left[
JV,Y\right] \right) -\gamma \left( Y\right) \gamma \left( \left[ JV,X\right]
\right) .  \label{JV(gamaY)}
\end{equation}%
It follows that%
\begin{equation}
\gamma \left( Y\right) H\left( V\right) =\frac{1}{2}\left( \gamma \left(
Y\right) \left[ V,X\right] -\gamma \left( Y\right) \gamma \left( \left[ V,X%
\right] \right) X+J\left( \gamma \left( Y\right) \left[ JV,X\right] -\gamma
\left( Y\right) \gamma \left( \left[ JV,X\right] \right) X\right) \right) .
\label{Gama(Y)H}
\end{equation}%
and by (\ref{V(gamaY)}) and (\ref{JV(gamaY)}) in the addition of (\ref%
{dbar(Y-gama(Y)X}) and (\ref{Gama(Y)H}) it follows that%
\begin{equation*}
\left( \overline{\partial }\left( Y-\gamma \left( Y\right) X\right) \right)
\left( V\right) +\gamma \left( Y\right) H\left( V\right) =\frac{1}{2}\left( %
\left[ V,Y\right] -\gamma \left( \left[ V,Y\right] \right) X\right) +\frac{1%
}{2}J\left( \left[ JV,Y\right] -\gamma \left( \left[ JV,Y\right] \right)
X\right) =H_{Y}\left( V\right)
\end{equation*}%
and the Lemma is proved.
\end{proof}

\begin{corollary}
\begin{equation*}
\overline{\partial }H_{Y}=\left( \delta \left( \gamma \left( Y\right)
\right) \right) ^{0,1}\wedge H
\end{equation*}
\end{corollary}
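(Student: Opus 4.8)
The plan is to differentiate the formula of Lemma~\ref{Rel H Y H} and feed in Lemma~\ref{dbar H}. Set $f = \gamma(Y) = \iota_Y\gamma \in C^\infty(L)$ and $U_Y = Y - fX$; since $\gamma(X) = 1$ we have $\gamma(U_Y) = 0$, so $U_Y \in \xi = \ker\gamma$ and Lemma~\ref{Rel H Y H} reads $H_Y = \overline{\partial}U_Y + fH$. Applying $\overline{\partial} = \overline{\partial}_J$ to both sides, the first term contributes $\overline{\partial}^2 U_Y$, which vanishes: since $(\xi,J)$ is a Levi flat structure we have $N_J = 0$, hence the Dolbeault-type operator on $\Lambda_J^{0,\ast}(\xi)\otimes\xi$ satisfies $\overline{\partial}_J^2 = 0$ (this is the identity already used implicitly in the proof of Proposition~\ref{H fermee}). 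For the second term I would use the Leibniz rule $\overline{\partial}(fH) = \overline{\partial}f\wedge H + f\,\overline{\partial}H$, where $\overline{\partial}f$ is the leafwise $\overline{\partial}$ of the function $f$, together with Lemma~\ref{dbar H}, which gives $\overline{\partial}H = (\iota_X d\gamma)^{0,1}\wedge H$. Combining, one obtains
\[
\overline{\partial}H_Y = \overline{\partial}f\wedge H + f\,(\iota_X d\gamma)^{0,1}\wedge H = \bigl(\overline{\partial}f + f\,(\iota_X d\gamma)^{0,1}\bigr)\wedge H,
\]
so it only remains to identify the scalar $(0,1)$-form in brackets with $(\delta f)^{0,1} = (\delta(\gamma(Y)))^{0,1}$.

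For that identification I would invoke formula~(\ref{B}) in the special case of the function $f$ (as already done in the proof of Lemma~\ref{d/dt(chi)=delta}): using $\mathcal{L}_X\gamma = \iota_X d\gamma$ one gets $\delta f = df + f\,\iota_X d\gamma - X(f)\,\gamma$. One checks $\iota_X(\delta f) = 0$, so $\delta f$ lies in $\mathcal{Z}^1(L)$ and defines a leafwise $1$-form. Restricting the identity to $\xi$, the term $X(f)\,\gamma$ drops because $\gamma|_\xi = 0$; the restriction of $df$ to the leaves is $d_b f$, whose $(0,1)$-part is $\overline{\partial}_J f$; and $(\iota_X d\gamma)|_\xi$ contributes $(\iota_X d\gamma)^{0,1}$. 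Hence $(\delta f)^{0,1} = \overline{\partial}_J f + f\,(\iota_X d\gamma)^{0,1}$, which is exactly the bracketed form above, and the corollary follows.

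The only points requiring care are the bookkeeping with the various $(0,1)$-projections (matching the abstract $\overline{\partial}$ of a function with the leafwise $\overline{\partial}$, and verifying $\iota_X(\delta f)=0$) and the appeal to $\overline{\partial}_J^2 = 0$, which is where the Levi flatness hypothesis $N_J = 0$ enters. I do not expect a genuine obstacle: once Lemmas~\ref{Rel H Y H} and~\ref{dbar H} are in hand, the statement is a formal consequence of the Leibniz rule and the definition of $\delta$ on functions.
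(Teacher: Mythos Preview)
Your proposal is correct and follows essentially the same route as the paper: the paper also starts from Lemma~\ref{Rel H Y H} (writing $\overline{\partial}H_Y=\overline{\partial}(\gamma(Y)H)$, with the $\overline{\partial}^2$-term suppressed), applies the Leibniz rule together with Lemma~\ref{dbar H}, and then identifies the resulting coefficient with $(\delta(\gamma(Y)))^{0,1}$ via the formula $\delta f = df + f\,\iota_X d\gamma - X(f)\gamma$ restricted to $\xi$. Your write-up is simply more explicit about the vanishing of $\overline{\partial}^2 U_Y$ and about checking $\iota_X(\delta f)=0$.
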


\begin{proof}
\begin{eqnarray*}
\overline{\partial }H_{Y} &=&\overline{\partial }\left( \gamma \left(
Y\right) H\right) =\overline{\partial }\left( \gamma \left( Y\right) \right)
\wedge H+\gamma \left( Y\right) \overline{\partial }H \\
&=&\overline{\partial }\left( \gamma \left( Y\right) \right) \wedge H+\gamma
\left( Y\right) \left( \iota _{X}d\gamma \right) ^{0,1}\wedge H \\
&=&\left( d\gamma \left( Y\right) +\gamma \left( Y\right) \iota _{X}d\gamma
\right) ^{0,1}\wedge H.
\end{eqnarray*}%
Since 
\begin{equation*}
\delta \left( \gamma \left( Y\right) \right) =d\gamma \left( Y\right) \gamma
\left( Y\right) +\gamma \left( Y\right) \iota _{X}d\gamma -X\left( \gamma
\left( Y\right) \right) \gamma
\end{equation*}%
it follows that 
\begin{equation*}
\delta \left( \gamma \left( Y\right) \right) =d\gamma \left( Y\right) \gamma
\left( Y\right) +\gamma \left( Y\right) \iota _{X}d\gamma
\end{equation*}%
on $\xi $ and the Corollary follows.
\end{proof}

\begin{corollary}
\label{Def Fi}Let $\beta \in \mathcal{Z}^{1}\left( L\right) $ and $\varphi
\in C^{\infty }\left( L\right) $. Then%
\begin{equation*}
\left( \beta +\delta \varphi \right) ^{0,1}\wedge H=\beta ^{0,1}\wedge H+%
\overline{\partial }\left( \varphi H\right) .
\end{equation*}%
In particular the map $\Phi _{H}:\beta \mapsto \beta ^{0,1}$ $\wedge H$
induces an application%
\begin{equation}
\Phi _{H}:H^{1}\left( \mathcal{Z}\left( L\right) ,\delta \right) \rightarrow
H^{2}\left( \Lambda ^{0,\ast }\left( \xi \right) \otimes \xi ,\overline{%
\partial }\right) .  \label{FI h}
\end{equation}
\end{corollary}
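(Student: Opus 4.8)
The plan is to establish the displayed identity by a direct computation and then deduce the statement about $\Phi _{H}$ as a formal consequence. Since $\left( \cdot \right) ^{0,1}$ is additive, it suffices to prove $\left( \delta \varphi \right) ^{0,1}\wedge H=\overline{\partial }\left( \varphi H\right) $ and then add $\beta ^{0,1}\wedge H$ to both sides. First I would unwind $\delta \varphi =d\varphi +\left\{ \gamma ,\varphi \right\} $ using the definition of $\left\{ \cdot ,\cdot \right\} $ together with $\mathcal{L}_{X}\gamma =\iota _{X}d\gamma $, obtaining $\delta \varphi =d\varphi +\varphi \,\iota _{X}d\gamma -\left( X\varphi \right) \gamma $; restricting to $\xi $, where $\gamma $ vanishes, and taking the $\left( 0,1\right) $-part gives $\left( \delta \varphi \right) ^{0,1}=\overline{\partial }\varphi +\varphi \left( \iota _{X}d\gamma \right) ^{0,1}$. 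On the other side, the Leibniz rule for $\overline{\partial }$ on $\Lambda ^{0,\ast }\left( \xi \right) \otimes \xi $ gives $\overline{\partial }\left( \varphi H\right) =\overline{\partial }\varphi \wedge H+\varphi \,\overline{\partial }H$, and Lemma \ref{dbar H} gives $\overline{\partial }H=\left( \iota _{X}d\gamma \right) ^{0,1}\wedge H$; comparison proves the identity. A shorter route is to apply the preceding Corollary to $Y=\varphi X$: then $\gamma \left( Y\right) =\varphi $, Lemma \ref{Rel H Y H} forces $H_{\varphi X}=\overline{\partial }\left( \varphi X-\varphi X\right) +\varphi H=\varphi H$, and hence $\overline{\partial }\left( \varphi H\right) =\overline{\partial }H_{\varphi X}=\left( \delta \varphi \right) ^{0,1}\wedge H$.

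For the "in particular" statement, $\Phi _{H}$ is additive, and the identity just proved says precisely that elements of the form $\delta \varphi $ with $\varphi \in C^{\infty }\left( L\right) $ — that is, the $\delta $-coboundaries inside $\mathcal{Z}^{1}\left( L\right) $ — are carried to $\overline{\partial }$-coboundaries; so it remains only to check that $\beta ^{0,1}\wedge H$ is $\overline{\partial }$-closed whenever $\delta \beta =0$. Here I would observe that, by (\ref{gama alfa}) and (\ref{db}), $\delta \beta =d_{b}\beta +\iota _{X}d\gamma \wedge \beta $ on $\mathcal{Z}^{\ast }\left( L\right) $, so $\delta \beta =0$ yields, after passing to the $\left( 0,2\right) $-component along the leaves, $\overline{\partial }\beta ^{0,1}=-\left( \iota _{X}d\gamma \right) ^{0,1}\wedge \beta ^{0,1}$. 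Then, using the graded Leibniz rule and Lemma \ref{dbar H} once more,
\begin{equation*}
\overline{\partial }\left( \beta ^{0,1}\wedge H\right) =\overline{\partial }\beta ^{0,1}\wedge H-\beta ^{0,1}\wedge \overline{\partial }H=-\left( \iota _{X}d\gamma \right) ^{0,1}\wedge \beta ^{0,1}\wedge H-\beta ^{0,1}\wedge \left( \iota _{X}d\gamma \right) ^{0,1}\wedge H=0,
\end{equation*}
the two terms cancelling because $\beta ^{0,1}$ and $\left( \iota _{X}d\gamma \right) ^{0,1}$ are $1$-forms and anticommute. This shows that $\Phi _{H}$ descends to $H^{1}\left( \mathcal{Z}\left( L\right) ,\delta \right) \rightarrow H^{2}\left( \Lambda ^{0,\ast }\left( \xi \right) \otimes \xi ,\overline{\partial }\right) $.

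I do not expect a serious obstacle: the computations are short, and the only points requiring care are the bookkeeping of the restriction of forms on $L$ to $\xi $ and of the projection onto the $\left( 0,1\right) $- and $\left( 0,2\right) $-components, the sign in the Leibniz rule $\overline{\partial }\left( \alpha \wedge P\right) =\overline{\partial }\alpha \wedge P+\left( -1\right) ^{\deg \alpha }\alpha \wedge \overline{\partial }P$, and the identity $\left( d_{b}\beta \right) ^{0,2}=\overline{\partial }\left( \beta ^{0,1}\right) $ for a real $1$-form $\beta $ on $\xi $, which uses the integrability of $J$ on the leaves (i.e.\ $N_{J}=0$). Once these are in place the whole argument is essentially two invocations of Lemma \ref{dbar H}.
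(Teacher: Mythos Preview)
Your proof of the displayed identity is essentially identical to the paper's: expand $\delta\varphi=d\varphi+\varphi\,\iota_X d\gamma-(X\varphi)\gamma$, use $\gamma^{0,1}=0$, and invoke Lemma~\ref{dbar H} together with the Leibniz rule. The paper stops there and leaves the ``in particular'' statement to the reader, whereas you supply the verification that $\beta^{0,1}\wedge H$ is $\overline{\partial}$-closed when $\delta\beta=0$ (and your alternative route via $Y=\varphi X$ and the preceding Corollary); both additions are correct and welcome, but the core argument is the same.
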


\begin{proof}
\begin{equation*}
\left( \beta +\delta \varphi \right) ^{0,1}\wedge H=\beta ^{0,1}\wedge
H+\left( \delta \varphi \right) ^{0,1}\wedge H=\beta ^{0,1}\wedge H+\left(
d\varphi +\varphi \iota _{X}d\gamma -d\varphi \left( X\right) \gamma \right)
^{0,1}\wedge H.
\end{equation*}%
Since $\gamma ^{0,1}=0$, by using Lemma \ref{dbar H} we obtain%
\begin{equation*}
\left( \beta +\delta \varphi \right) ^{0,1}\wedge H=\beta ^{0,1}\wedge H+%
\overline{\partial }\varphi \wedge H+\varphi \overline{\partial }H=\beta
^{0,1}\wedge H+\overline{\partial }\left( \varphi H\right) .
\end{equation*}
\end{proof}

\section{Moduli space of deformations of Levi flat structures and rigidity}

\begin{definition}
Let $L$ be a smooth manifold, $\left( \xi ,J\right) $ a Levi flat structure
on $L$, and $I$ an open interval in $\mathbb{R}$ containing the origin. A
deformation of the Levi flat structure $\left( \xi ,J\right) $ is a smooth
family $\left\{ \left( \xi _{t},J_{t}\right) \right\} _{t\in I}$ of Levi
flat structures on $L$ such that $\left( \xi _{0},J_{0}\right) =\left( \xi
,J\right) $.
\end{definition}

\begin{remark}
By Corollary \ref{Eq def Levi flat} a deformation of the Levi flat structure 
$\left( \xi ,J\right) $ is given by a family $\left\{ \left( \alpha
_{t},S_{\alpha _{t}}\right) \right\} _{t\in I}$, $\alpha _{t}\in \mathcal{Z}%
^{1}\left( L\right) $, $S_{\alpha _{t}}\in \Lambda _{J}^{0,1}\left( \xi
\right) \otimes \xi $ such that $\alpha _{t}$ verifies (\ref{Eq alfa}) and $%
S_{\alpha _{t}}$ verifies (\ref{Eq S}) for every $t\in I$.
\end{remark}

\begin{definition}
A $\mathfrak{MC}_{\delta ,\mathcal{LF}}\left( L\right) $-valued curve
through the origin$\ $is a smooth mapping $\lambda :\left[ -a,a\right]
\rightarrow \mathfrak{MC}_{\delta ,\mathcal{LF}}\left( L\right) $, $a>0$,
such that $\lambda \left( 0\right) =0$. We say that $\left( \beta ,P\right)
\in \mathcal{Z}^{1}\left( L\right) \times \left( \Lambda ^{0,1}\left( \xi
\right) \otimes \xi \right) $ is the tangent vector at the origin to the $%
\mathfrak{MC}_{\delta ,\mathcal{LF}}\left( L\right) $-valued curve $\lambda $
through the origin if $\left( \beta ,P\right) =\underset{t\rightarrow 0}{%
\lim }\frac{\lambda \left( t\right) }{t}=\frac{d\lambda }{dt}_{\left\vert
t=0\right. }$.
\end{definition}

\begin{theorem}
\label{Caract T moduli}Let $L$ be a smooth manifold and $\left( \xi
,J\right) $ a Levi flat structure on $L$. Let $\left\{ \left( \xi
_{t},J_{t}\right) \right\} _{t\in I}$ be a deformation of $\left( \xi
,J\right) $ given by $\left\{ \left( \alpha _{t},S_{\alpha _{t}}\right)
\right\} _{t\in I}$ , $\alpha _{t}=t\beta +o\left( t\right) $, $S_{\alpha
_{t}}=tP+o\left( t\right) $, $\alpha _{t}$,$\beta \in \mathcal{Z}^{1}\left(
L\right) $, $S_{\alpha _{t}},P\in \Lambda _{J}^{0,1}\left( \xi \right)
\otimes \xi $. Then:

1) 
\begin{eqnarray}
\delta \beta &=&0  \label{Eq delta P1} \\
\overline{\partial }P &=&-\beta ^{0,1}\wedge H.  \label{Eq delta P2}
\end{eqnarray}

2) Let $\beta ^{\prime }\in \mathcal{Z}^{1}\left( L\right) $, $P^{\prime
}\in \Lambda ^{0,1}\left( \xi \right) \otimes \xi $ such that $\beta
^{\prime }$ verifies (\ref{Eq delta P1}) and $P^{\prime }$verifies (\ref{Eq
delta P2}). Then $\left( \beta ,P\right) \thicksim _{\mathcal{G}}\left(
\beta ^{\prime },P^{\prime }\right) $ if and only if there exists $Y\in 
\mathcal{H}\left( L\right) $ such that%
\begin{eqnarray}
\beta -\beta ^{\prime } &=&\delta \iota _{Y}\gamma  \label{beta-beta'} \\
\ P-P^{\prime } &=&-H_{Y}.  \label{P-P'}
\end{eqnarray}
\end{theorem}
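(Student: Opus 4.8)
The plan is to linearise at $t=0$ the two equations cutting out $\mathfrak{MC}_{\delta,\mathcal{LF}}(L)$ for part 1), and for part 2) to transport a realising curve by the flow of a vector field and read off the tangent vector of the transported curve.

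For 1): by Corollary~\ref{Eq def Levi flat} each $\alpha_t$ solves \eqref{Eq alfa} and each $S_{\alpha_t}$ solves \eqref{Eq S}. I would substitute $\alpha_t=t\beta+o(t)$, $S_{\alpha_t}=tP+o(t)$ and differentiate at $t=0$. In \eqref{Eq alfa} the bracket $\{\alpha_t,\alpha_t\}$ is quadratic in $\alpha_t$, hence contributes nothing to the linear term, so $\delta\beta=0$, exactly as in Proposition~\ref{Tangent Inclus Cohomologie}~i). In \eqref{Eq S} the points are: $\alpha_0=0$ forces $\omega_{\alpha_0}=\mathrm{Id}$, so $[\cdot,\cdot]_{\alpha_0}=[\cdot,\cdot]$, $N_J^{\alpha_0}=N_J=0$ and $\overline{\partial}_J^{\alpha_0}=\overline{\partial}_J$; the term $[[S_{\alpha_t},S_{\alpha_t}]]_{\alpha_t}$ is $O(t^{2})$, being at least quadratic in $S_{\alpha_t}$ (and $N_J^{\alpha_t}=-4(\alpha_t)^{0,1}\wedge H$ is itself $O(t)$); and the variation of the operator $\overline{\partial}_J^{\alpha_t}$ drops out when applied to $S_{\alpha_0}=0$. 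Since $\alpha\mapsto\alpha^{0,1}$ is linear, differentiating what survives gives $\overline{\partial}_J P=-\beta^{0,1}\wedge H$, which is \eqref{Eq delta P2}.

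For 2): write $\lambda(t)=(\alpha_t,S_{\alpha_t})$ for the given curve (tangent $(\beta,P)$), let $Y$ be a vector field on $L$ with flow $\Phi^Y$, and set $\mu(t)=\bigl(\chi(\Phi_t^Y)(\alpha_t),\,S_{\chi(\Phi_t^Y)(\alpha_t)}\bigr)$, the second entry the form assigned by \eqref{S chi Fi alfa} to $\Phi_t^Y$ and $J_{\alpha_t}$. By Remark~\ref{group action} and Lemma~\ref{Gauge equivalence} (equivalently Corollary~\ref{Eq moduli}), $\mu$ is again an $\mathfrak{MC}_{\delta,\mathcal{LF}}(L)$-valued curve through the origin, gauge equivalent to $\lambda$ at every $t$. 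I would differentiate $\mu$ at $0$: by the chain rule the tangent is the contribution of varying $\Phi_t^Y$ with $(\alpha,S)$ frozen at $(0,0)$, plus that of varying $(\alpha_t,S_{\alpha_t})$ with $\Phi$ frozen at the identity. For the first coordinate these are $-\delta\iota_Y\gamma$ (Lemma~\ref{d/dt(chi)=delta}) and $\beta$, using $\chi(\mathrm{Id})(\alpha)=\alpha$ (because $\iota_X\alpha=0$ on $\mathcal{Z}^1(L)$), so the first coordinate of the tangent of $\mu$ is $\beta-\delta\iota_Y\gamma$ --- this is Proposition~\ref{Tangent Inclus Cohomologie}~ii). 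For the second coordinate the first contribution is $\frac{d}{dt}\big|_{t=0}S_{\chi(\Phi_t^Y)(0)}$, evaluated in Lemma~\ref{dS chi Fi}, and the second is $\frac{d}{dt}\big|_{t=0}S_{\alpha_t}=P$, since Lemma~\ref{rel complex structures} and \eqref{S chi Fi alfa} give $S_{\chi(\mathrm{Id})(\alpha)}=S_\alpha$. Hence the flow of $Y$ shifts the tangent vector of a realising curve by $\bigl(-\delta\iota_Y\gamma,\,\frac{d}{dt}\big|_{t=0}S_{\chi(\Phi_t^Y)(0)}\bigr)$, a pair linear in $Y$; by Lemma~\ref{dS chi Fi} and the identities $\delta\iota_{-Y}\gamma=-\delta\iota_Y\gamma$, $H_{-Y}=-H_Y$, this shift runs, as $Y$ varies, precisely over the pairs described by \eqref{beta-beta'}--\eqref{P-P'}.

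This yields $\Leftarrow$: given $Y$ satisfying \eqref{beta-beta'}--\eqref{P-P'}, the curve $\mu$ built from $\lambda$ and the appropriate multiple of $Y$ has tangent $(\beta',P')$, and $(\beta',P')$ does lie in the solution set of \eqref{Eq delta P1}--\eqref{Eq delta P2}, since $\delta^{2}=0$ and $\overline{\partial}H_Y=(\delta\iota_Y\gamma)^{0,1}\wedge H$ (the corollary following Lemma~\ref{Rel H Y H}); thus $(\beta,P)\thicksim_{\mathcal{G}}(\beta',P')$. For $\Rightarrow$: the definition of $\thicksim_{\mathcal{G}}$ on tangent vectors supplies a realising curve $\lambda$ for $(\beta,P)$ and a vector field $Y$ with $\chi(\Phi^Y_\bullet)(\lambda(\bullet))$ having tangent $(\beta',P')$, and the computation above forces $\beta-\beta'=\delta\iota_Y\gamma$, $P-P'=-H_Y$. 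The step I expect to be the main obstacle is the second-coordinate tangent computation in 2): one must verify that the gauge action on the $S$-component dictated by Corollary~\ref{Eq moduli} is exactly $S_{\chi(\Phi_t^Y)(\alpha_t)}$, split its $t$-derivative cleanly through Lemma~\ref{dS chi Fi} and the identity $S_{\chi(\mathrm{Id})(\alpha)}=S_\alpha$, and keep every sign consistent with \eqref{Eq delta P2} --- this last being precisely where $\overline{\partial}H_Y=(\delta\iota_Y\gamma)^{0,1}\wedge H$ enters. The remaining steps are routine linearisation.
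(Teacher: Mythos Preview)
Your proposal is correct and follows essentially the same approach as the paper: part 1) is obtained by linearising \eqref{Eq alfa} and \eqref{Eq S} at $t=0$, and part 2) by combining the computation in \eqref{beta-beta' debut} (Proposition~\ref{Tangent Inclus Cohomologie}) with Lemma~\ref{dS chi Fi}, exactly as the paper indicates. You simply supply more detail than the paper's one-line proof, including the chain-rule splitting of the $S$-coordinate and the consistency check via $\overline{\partial}H_Y=(\delta\iota_Y\gamma)^{0,1}\wedge H$; your handling of the sign via $Y\mapsto -Y$ is also in keeping with the paper's level of precision here.
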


\begin{proof}
1) follows from (\ref{Eq alfa}) and (\ref{Eq S}) and 2) from (\ref%
{beta-beta' debut}) and Lemma \ref{dS chi Fi}.
\end{proof}

\begin{definition}
Let $\mathfrak{Z}^{p}\left( L,\xi \right) =\mathcal{Z}^{p}\left( L\right)
\oplus \Lambda ^{0,p}\left( \xi \right) \otimes \xi $, $\mathfrak{Z=}\oplus
_{p\in \mathbb{N}}\mathfrak{Z}^{p}$ and $\mathfrak{d=}\left( \mathfrak{d}%
^{p}\right) _{p\in \mathbb{N}}:\mathfrak{Z}\rightarrow \mathfrak{Z}$, where $%
\mathfrak{d}^{p}:\mathfrak{Z}^{p}\rightarrow \mathfrak{Z}^{p+1}$,%
\begin{equation}
\mathfrak{d}^{p}\left( \alpha ,P\right) =\left( \delta ^{p}\alpha ,\overline{%
\partial }P+\left( -1\right) ^{p+1}\alpha ^{0,p}\wedge H\right) .
\label{Def d fraktur}
\end{equation}
\end{definition}

\begin{proposition}
$\mathfrak{d\circ d}=0$. $\ \ $
\end{proposition}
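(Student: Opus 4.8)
The plan is to show that both components of $\mathfrak{d}^{p+1}\bigl(\mathfrak{d}^{p}(\alpha,P)\bigr)$ vanish for every $(\alpha,P)\in\mathfrak{Z}^{p}(L,\xi)$. Expanding the definition (\ref{Def d fraktur}),
\[
\mathfrak{d}^{p+1}\mathfrak{d}^{p}(\alpha,P)=\Bigl(\delta^{2}\alpha,\ \overline{\partial}\bigl(\overline{\partial}P+(-1)^{p+1}\alpha^{0,p}\wedge H\bigr)+(-1)^{p+2}(\delta\alpha)^{0,p+1}\wedge H\Bigr).
\]
The first component is $\delta^{2}\alpha=0$, since $\bigl(\mathcal{Z}^{\ast}(L),\delta,\{\cdot,\cdot\}\bigr)$ is a DGLA by Corollaries \ref{Forms+delta=DGLA} and \ref{Z* subalgebra}. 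For the second component, first note $\overline{\partial}^{2}P=0$: because $(\xi,J)$ is a Levi flat structure, $J$ is integrable along the leaves, so $N_{J}=0$ and the operator $\overline{\partial}=\overline{\partial}_{J}$ on $\Lambda^{0,\ast}(\xi)\otimes\xi$ is a differential (the same integrability already used in the proof of Proposition \ref{H fermee}). Hence the second component reduces to $(-1)^{p+1}\overline{\partial}(\alpha^{0,p}\wedge H)+(-1)^{p+2}(\delta\alpha)^{0,p+1}\wedge H$, and it suffices to establish the single identity
\[
\overline{\partial}(\alpha^{0,p}\wedge H)=(\delta\alpha)^{0,p+1}\wedge H,
\]
since then the two remaining terms cancel because $(-1)^{p+1}+(-1)^{p+2}=0$.

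I would prove this identity in two steps. First, the Leibniz rule for $\overline{\partial}$ on $\Lambda^{0,\ast}(\xi)\otimes\xi$ together with Lemma \ref{dbar H} gives
\[
\overline{\partial}(\alpha^{0,p}\wedge H)=\overline{\partial}\alpha^{0,p}\wedge H+(-1)^{p}\alpha^{0,p}\wedge\overline{\partial}H=\overline{\partial}\alpha^{0,p}\wedge H+(-1)^{p}\alpha^{0,p}\wedge(\iota_{X}d\gamma)^{0,1}\wedge H,
\]
and commuting the $1$-form $(\iota_{X}d\gamma)^{0,1}$ past the $p$-form $\alpha^{0,p}$ produces a factor $(-1)^{p}$ which cancels the one in front, so $\overline{\partial}(\alpha^{0,p}\wedge H)=\bigl(\overline{\partial}\alpha^{0,p}+(\iota_{X}d\gamma)^{0,1}\wedge\alpha^{0,p}\bigr)\wedge H$. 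Second, I claim $(\delta\alpha)^{0,p+1}=\overline{\partial}\alpha^{0,p}+(\iota_{X}d\gamma)^{0,1}\wedge\alpha^{0,p}$. Indeed, combining (\ref{db}) and (\ref{gama alfa}) yields, for $\alpha\in\mathcal{Z}^{\ast}(L)$,
\[
\delta\alpha=d\alpha+\{\gamma,\alpha\}=d\alpha-\gamma\wedge\iota_{X}d\alpha+\iota_{X}d\gamma\wedge\alpha=d_{b}\alpha+\iota_{X}d\gamma\wedge\alpha.
\]
Taking the $(0,p+1)$-component with respect to the bidegree decomposition along the leaves, the leafwise differential splits as $d_{b}=\partial_{J}+\overline{\partial}_{J}$ (because $J$ is integrable on the leaves), with $\overline{\partial}_{J}$ of type $(0,1)$, so $(d_{b}\alpha)^{0,p+1}=\overline{\partial}_{J}(\alpha^{0,p})$; and since $\iota_{X}d\gamma$ is a $1$-form, bidegree counting gives $(\iota_{X}d\gamma\wedge\alpha)^{0,p+1}=(\iota_{X}d\gamma)^{0,1}\wedge\alpha^{0,p}$. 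Adding these proves the claim, hence the identity, hence $\mathfrak{d}\circ\mathfrak{d}=0$.

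The argument is essentially bookkeeping; the only points needing care are the sign conventions in the Leibniz rule for vector-valued forms and the bidegree projection of $\delta\alpha$, together with the fact that $\overline{\partial}_{J}^{2}=0$ on $\Lambda^{0,\ast}(\xi)\otimes\xi$ — which is exactly the leafwise integrability of $J$ built into the notion of a Levi flat structure. No input beyond Lemma \ref{dbar H} and the identities (\ref{db}), (\ref{gama alfa}) is required.
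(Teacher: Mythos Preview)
Your proof is correct and follows essentially the same approach as the paper: both reduce to the vanishing of $(-1)^{p+1}\overline{\partial}(\alpha^{0,p}\wedge H)+(-1)^{p+2}(\delta\alpha)^{0,p+1}\wedge H$, expand via the Leibniz rule and Lemma \ref{dbar H}, and then use (\ref{gama alfa}) together with $\gamma^{0,1}=0$ on $\xi$ to see the cancellation. The only organizational difference is that you first isolate the identity $\overline{\partial}(\alpha^{0,p}\wedge H)=(\delta\alpha)^{0,p+1}\wedge H$ and then verify it via the rewriting $\delta\alpha=d_{b}\alpha+\iota_{X}d\gamma\wedge\alpha$, whereas the paper carries $d\alpha$ and $\{\gamma,\alpha\}$ separately through the computation; the underlying algebra is the same.
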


\begin{proof}
Let $\left( \alpha ,P\right) \in \mathfrak{Z}^{p}\left( L,\xi \right) $.\ By
Corollary \ref{Forms+delta=DGLA}. we have $\delta ^{p+1}\delta ^{p}=0$ so by
Lemma \ref{dbar H} it follows that 
\begin{eqnarray*}
\mathfrak{d}^{p+1}\mathfrak{d}^{p}\left( \alpha ,P\right) &=&\mathfrak{d}%
^{p+1}\left( \delta ^{p}\alpha ,\overline{\partial }P+\left( -1\right)
^{p+1}\alpha ^{0,p}\wedge H\right) \\
&=&\left( 0,\left( \left( -1\right) ^{p+1}\overline{\partial }\left( \alpha
^{0,p}\wedge H\right) +\left( -1\right) ^{p+2}\left( \delta ^{p}\alpha
\right) ^{0,p+1}\wedge H\right) \right) \\
&=&\left( -1\right) ^{p+1}\left( 0,\overline{\partial }\alpha ^{0,p}\wedge
H+\left( -1\right) ^{p}\alpha ^{0,p}\wedge \overline{\partial }H-\left(
d\alpha +\left\{ \gamma ,\alpha \right\} \right) ^{0,p+1}\wedge H\right) \\
&=&\left( -1\right) ^{p+1}\left( 0,\overline{\partial }\alpha ^{0,p}\wedge
H+\left( -1\right) ^{p}\alpha ^{0,p}\wedge \overline{\partial }H-\overline{%
\partial }\alpha ^{0,p}\wedge H+\left\{ \gamma ,\alpha \right\}
^{0,p+1}\wedge H\right) \\
&=&\left( -1\right) ^{p+1}\left( 0,\left( -1\right) ^{p}\alpha ^{0,p}\wedge
\left( \iota _{X}d\gamma \right) ^{0,1}\wedge H-\left( \iota _{X}d\gamma
\wedge \alpha -\gamma \wedge \iota _{X}d\alpha \right) ^{0,p+1}\right)
\wedge H \\
&=&\left( -1\right) ^{p+1}\left( 0,\left( -1\right) ^{p}\alpha ^{0,p}\wedge
\left( \iota _{X}d\gamma \right) ^{0,1}-\left( \iota _{X}d\gamma \right)
^{0,1}\wedge \alpha ^{0,p}-\gamma ^{0,1}\wedge \left( \iota _{X}d\alpha
\right) ^{0,p}\right) \wedge H.
\end{eqnarray*}%
But $\gamma ^{0,1}=0$ and the Proposition is proved.
\end{proof}

\begin{notation}
We denote $\mathcal{A}$ the set of $\left( \beta ,P\right) \in \mathfrak{Z}%
^{1}\left( L,\xi \right) $ such that $\beta $ verifies (\ref{Eq delta P1})
and $P$ verifies (\ref{Eq delta P2}).
\end{notation}

\begin{theorem}
\label{Tangent moduli}Let $L$ be a smooth manifold and $\left( \xi ,J\right) 
$ a Levi flat structure on $L$. Then $\mathcal{A}/\thicksim _{\mathcal{G}}$
is canonically isomorphic with $H^{1}\left( \mathfrak{Z,d}\right) $.
\end{theorem}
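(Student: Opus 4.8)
The plan is to recognize $(\mathfrak{Z},\mathfrak{d})$ as the cochain complex whose degree-$1$ cocycles are exactly the elements of $\mathcal{A}$ and whose degree-$1$ coboundaries encode the relation $\thicksim_{\mathcal{G}}$, so that the cohomology-class map $(\beta,P)\mapsto[(\beta,P)]$ descends to the claimed bijection $\mathcal{A}/\thicksim_{\mathcal{G}}\to H^{1}(\mathfrak{Z},\mathfrak{d})$.

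First I would unwind the definition (\ref{Def d fraktur}). In degree $1$ it gives $\mathfrak{d}^{1}(\beta,P)=(\delta\beta,\overline{\partial}P+\beta^{0,1}\wedge H)$, so $(\beta,P)\in\ker\mathfrak{d}^{1}$ exactly when $\delta\beta=0$ and $\overline{\partial}P=-\beta^{0,1}\wedge H$, i.e. when (\ref{Eq delta P1}) and (\ref{Eq delta P2}) hold; thus $\mathcal{A}=Z^{1}(\mathfrak{Z},\mathfrak{d})$. In degree $0$ one has $\mathcal{Z}^{0}(L)=C^{\infty}(L)$ and $\Lambda^{0,0}(\xi)\otimes\xi=\mathcal{H}(\xi)$, and (\ref{Def d fraktur}) reads $\mathfrak{d}^{0}(\varphi,W)=(\delta\varphi,\overline{\partial}W-\varphi H)$, which describes $B^{1}(\mathfrak{Z},\mathfrak{d})$.

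The core of the proof is then to show that, for $(\beta,P),(\beta',P')\in\mathcal{A}$, the relation $(\beta,P)\thicksim_{\mathcal{G}}(\beta',P')$ is equivalent to $(\beta-\beta',P-P')\in\operatorname{Im}\mathfrak{d}^{0}$. By Theorem~\ref{Caract T moduli}(2) the former means there is $Y\in\mathcal{H}(L)$ with $\beta-\beta'=\delta\iota_{Y}\gamma$ and $P-P'=-H_{Y}$. I would set up the dictionary $Y\longleftrightarrow(\varphi,W)$ given by $\varphi=\gamma(Y)$, $W=\gamma(Y)X-Y$, with inverse $Y=\varphi X-W$, and check that $\gamma(W)=0$ (so $W\in\mathcal{H}(\xi)$), that $\iota_{Y}\gamma=\varphi$, and that $Y-\gamma(Y)X=-W$. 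Substituting into Lemma~\ref{Rel H Y H}, which states $H_{Y}=\overline{\partial}(Y-\gamma(Y)X)+\gamma(Y)H$, then yields $-H_{Y}=\overline{\partial}W-\varphi H$, so $(\delta\iota_{Y}\gamma,-H_{Y})=\mathfrak{d}^{0}(\varphi,W)$; running the dictionary in both directions gives the desired equivalence.

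Finally, since $Y\mapsto(\iota_{Y}\gamma,H_{Y})$ is $\mathbb{R}$-linear, $\thicksim_{\mathcal{G}}$ is genuinely an equivalence relation on $\mathcal{A}$, and by the previous step its classes are precisely the cosets of $B^{1}(\mathfrak{Z},\mathfrak{d})$ in $Z^{1}(\mathfrak{Z},\mathfrak{d})=\mathcal{A}$; hence the cohomology-class map induces a canonical bijection $\mathcal{A}/\thicksim_{\mathcal{G}}\cong H^{1}(\mathfrak{Z},\mathfrak{d})$. I expect the only delicate point to be the bookkeeping in the dictionary $Y\leftrightarrow(\varphi,W)$ — keeping track of which component is tangent to $\xi$ and reconciling the sign $(-1)^{p+1}$ in (\ref{Def d fraktur}) with the sign produced by Lemma~\ref{Rel H Y H}; everything else follows directly from results already established.
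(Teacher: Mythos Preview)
Your proposal is correct and follows essentially the same route as the paper: identify $\mathcal{A}$ with $Z^{1}(\mathfrak{Z},\mathfrak{d})$, then use Theorem~\ref{Caract T moduli}(2) together with Lemma~\ref{Rel H Y H} to match $\thicksim_{\mathcal{G}}$ with the coboundary relation via the decomposition $Y\leftrightarrow(\gamma(Y),\gamma(Y)X-Y)$. Your explicit two-sided dictionary between $Y\in\mathcal{H}(L)$ and pairs $(\varphi,W)\in C^{\infty}(L)\times\mathcal{H}(\xi)$ makes the bijectivity cleaner than in the paper, where the injectivity step is stated only for coboundaries of the special form $\mathfrak{d}^{0}(0,V)$.
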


\begin{proof}
Let $\left[ \left( \beta ,P\right) \right] \in \mathcal{A}/\thicksim _{%
\mathcal{G}}$, $\left( \beta ,P\right) \in \mathcal{A}$. By (\ref{Eq delta
P1}) and (\ref{Eq delta P2}), $\left( \beta ,P\right) $ defines an element $%
\widehat{\left( \beta ,P\right) }\in H^{1}\left( \mathfrak{Z,d}\right) $.

Suppose that $\left[ \left( \beta ,P\right) \right] =\left[ \left(
0,0\right) \right] $. By (\ref{beta-beta'}) and (\ref{P-P'}) there exists $%
Y\in \mathcal{H}\left( L\right) $ such that $\beta =\delta \left( \gamma
\left( Y\right) \right) $ and $P=-H_{Y}$. The Lemma \ref{Rel H Y H} implies
that 
\begin{equation*}
P=\overline{\partial }\left( Y-\gamma \left( Y\right) X\right) +\gamma
\left( Y\right) H
\end{equation*}%
so%
\begin{equation*}
\mathfrak{d}^{0}\left( \gamma \left( Y\right) ,-\left( Y-\gamma \left(
Y\right) X\right) \right) =\left( \delta \left( \gamma \left( Y\right)
\right) ,-\overline{\partial }\left( Y-\gamma \left( Y\right) X\right)
+\gamma \left( Y\right) H\right) =\left( \beta ,-H_{Y}\right) =\left( \beta
,P\right) .
\end{equation*}%
It follows that $\widehat{\left( \beta ,P\right) }$ $=$ $\widehat{\left(
0,0\right) }$ and the map $F:\mathcal{A}\rightarrow H^{1}\left( \mathfrak{Z,d%
}\right) $,$\ F\left( \left[ \left( \beta ,P\right) \right] \right) =$ $%
\widehat{\left( \beta ,P\right) }$, is well defined.

Let now $\widehat{\left( \beta ,P\right) }\in H^{1}\left( \mathfrak{Z,d}%
\right) $, with $\left( \beta ,P\right) \in \mathfrak{Z}^{1}\left( L,\xi
\right) $, $\mathfrak{d}^{1}\left( \beta ,P\right) =\left( \delta \beta ,%
\overline{\partial }P+\beta ^{0,1}\wedge H\right) =\left( 0,0\right) $. It
follows that $\beta $ verifies (\ref{Eq delta P1}) and $P$ verifies (\ref{Eq
delta P2}). In particular $\left( \beta ,P\right) \in \mathcal{A}$.

Suppose now that $\widehat{\left( \beta ,P\right) }=\widehat{\left(
0,0\right) }$, i.e. $\left( \beta ,P\right) =\mathfrak{d}^{0}\left(
0,V\right) =\left( 0,-\overline{\partial }V\right) $, $V\in \xi $. Since $%
\overline{\partial }V=H_{V}$ byRemark \ref{Rem H} ii), it follows that $%
P=-H_{V}$.\ By (\ref{beta-beta'}) and (\ref{P-P'}) it follows that $\left[
\left( \beta ,P\right) \right] =\left[ 0,0\right] \in \mathcal{A}/\thicksim $%
. In particular $F$ is an isomorphism.
\end{proof}

The Theorems \ref{Caract T moduli} and \ref{Tangent moduli} justify the
following:

\begin{definition}
The infinitesimal deformations of the Levi flat structure $\left( \xi
,J\right) $ is the collection of cohomology classes in $H^{1}\left( 
\mathfrak{Z,d}\right) $ of the tangent vectors at $0$ to $\mathfrak{MC}%
_{\delta ,\mathcal{LF}}\left( L\right) $-valued curves.We denote by $T_{%
\left[ 0\right] }\left( \mathfrak{MC}_{\delta ,\mathcal{LF}}\left( L\right)
/\thicksim _{\mathcal{G}}\right) $ the set of infinitesimal deformations of $%
\ \left( \xi ,J\right) $.
\end{definition}

\begin{definition}
Let $L$ be a smooth manifold, $\left( \xi ,J\right) $ a Levi flat structure
on $L$. We say that $\left( L,\xi ,J\right) $ is infinitesimally rigid
(respectively strongly infinitesimally rigid), if for any smooth family $%
\left\{ \left( \alpha _{t},S_{\alpha _{t}}\right) \right\} _{t\in I}$
defining a deformation of the Levi flat structure $\left( \xi ,J\right) $,
the class of the tangent vector to the $\mathfrak{MC}_{\delta ,\mathcal{LF}%
}\left( L\right) $-valued curve $t\mapsto \left( \alpha _{t},S_{\alpha
_{t}}\right) $ in $\mathcal{A}/\thicksim _{\mathcal{G}}$vanishes
(respectively the tangent vector to the $\mathfrak{MC}_{\delta ,\mathcal{LF}%
}\left( L\right) $-valued curve $t\mapsto \left( \alpha _{t},S_{\alpha
_{t}}\right) $ vanishes).
\end{definition}

\begin{corollary}
Let $L$ be a smooth manifold $\left( \xi ,J\right) $ a Levi flat structure
on $L$ such that $H^{1}\left( \mathfrak{Z,d}\right) =0$. Then $\left( L,\xi
,J\right) $ is infinitesimally rigid.
\end{corollary}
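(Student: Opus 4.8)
The plan is to derive this corollary directly from the two preceding theorems, so almost no new work is needed. First I would unwind the definition of infinitesimal rigidity: one is handed an arbitrary smooth family $\left\{ \left( \alpha _{t},S_{\alpha _{t}}\right) \right\} _{t\in I}$ defining a deformation of $\left( \xi ,J\right) $, and one must show that the class in $\mathcal{A}/\thicksim _{\mathcal{G}}$ of the tangent vector at $0$ to the associated $\mathfrak{MC}_{\delta ,\mathcal{LF}}\left( L\right) $-valued curve $t\mapsto \left( \alpha _{t},S_{\alpha _{t}}\right) $ is trivial.

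Write $\alpha _{t}=t\beta +o\left( t\right) $ and $S_{\alpha _{t}}=tP+o\left( t\right) $, so that $\left( \beta ,P\right) $ is by definition the tangent vector at the origin. By part 1) of Theorem~\ref{Caract T moduli}, $\beta $ satisfies (\ref{Eq delta P1}) and $P$ satisfies (\ref{Eq delta P2}); by the very definition of $\mathcal{A}$ this says precisely $\left( \beta ,P\right) \in \mathcal{A}$, so the class $\left[ \left( \beta ,P\right) \right] \in \mathcal{A}/\thicksim _{\mathcal{G}}$ is well defined. Now I would invoke Theorem~\ref{Tangent moduli}: the canonical map $F$ identifies $\mathcal{A}/\thicksim _{\mathcal{G}}$ with $H^{1}\left( \mathfrak{Z,d}\right) $. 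Since by hypothesis $H^{1}\left( \mathfrak{Z,d}\right) =0$, the quotient $\mathcal{A}/\thicksim _{\mathcal{G}}$ reduces to the single class $\left[ \left( 0,0\right) \right] $, hence $\left[ \left( \beta ,P\right) \right] =\left[ \left( 0,0\right) \right] $. Thus the class of the tangent vector vanishes, and as the family was arbitrary, $\left( L,\xi ,J\right) $ is infinitesimally rigid.

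There is essentially no obstacle here, because all the substance has been front-loaded into Theorems~\ref{Caract T moduli} and~\ref{Tangent moduli} (and, beneath them, into Lemma~\ref{Rel H Y H} and the complex $\left( \mathfrak{Z},\mathfrak{d}\right) $ built in the earlier sections). The only points requiring a little care are purely bookkeeping: checking that the tangent vector extracted from the expansions $\alpha _{t}=t\beta +o\left( t\right) $, $S_{\alpha _{t}}=tP+o\left( t\right) $ is exactly the pair $\left( \beta ,P\right) $ whose components are constrained by Theorem~\ref{Caract T moduli}, and observing that ``the class vanishes'' in the definition of infinitesimal rigidity simply means equality with $\left[ \left( 0,0\right) \right] $, which is automatic once the quotient $\mathcal{A}/\thicksim _{\mathcal{G}}$ is a single point.
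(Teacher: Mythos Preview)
Your proposal is correct and matches the paper's intended argument: the corollary is stated in the paper without proof, as an immediate consequence of Theorems~\ref{Caract T moduli} and~\ref{Tangent moduli} together with the definition of infinitesimal rigidity, and your write-up simply makes that implication explicit.
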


\begin{corollary}
Let $L$ be a smooth manifold and $\left( \xi ,J\right) $ an exact Levi flat
structure on $L$. Then $\mathcal{A}/\thicksim _{\mathcal{G}}$ is canonically
isomorphic to $H^{1}\left( \mathcal{Z}^{\ast }\left( L\right) ,\delta
\right) \times H^{1}\left( \Lambda _{J}^{0,\ast }\left( \xi \right) \otimes
\xi \right) $ and we have a canonical injection \ 
\begin{equation*}
T_{\left[ 0\right] }\left( \mathfrak{MC}_{\delta ,\mathcal{LF}}\left(
L\right) /\thicksim \right) \hookrightarrow H^{1}\left( \mathcal{Z}^{\ast
}\left( L\right) ,\delta \right) \times H^{1}\left( \Lambda _{J}^{0,\ast
}\left( \xi \right) \otimes \xi ,\overline{\partial }_{J}\right) .
\end{equation*}
\end{corollary}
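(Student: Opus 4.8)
The plan is to derive this corollary from Theorem \ref{Tangent moduli} together with the defining property of an exact structure. By Theorem \ref{Tangent moduli} we already know that $\mathcal{A}/\thicksim_{\mathcal{G}}$ is canonically isomorphic to $H^{1}\left( \mathfrak{Z},\mathfrak{d}\right) $, so the only thing to do is to compute $H^{1}\left( \mathfrak{Z},\mathfrak{d}\right) $ under the hypothesis that $\left( \xi ,J\right) $ is exact.

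First I would invoke Lemma \ref{exact}: since $\left( \xi ,J\right) $ is exact, there is a $DGLA$ defining couple $\left( \gamma ,X\right) $ whose associated $\left( 0,1\right) $-form vanishes, $H=H_{J,\gamma ,X}=0$. Running the construction with this couple, the differential (\ref{Def d fraktur}) becomes simply $\mathfrak{d}^{p}\left( \alpha ,P\right) =\left( \delta^{p}\alpha ,\overline{\partial}P\right) $; that is, $\mathfrak{d}$ is block-diagonal for the splitting $\mathfrak{Z}^{p}\left( L,\xi\right) =\mathcal{Z}^{p}\left( L\right) \oplus \left( \Lambda^{0,p}\left( \xi\right) \otimes \xi\right) $. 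Hence the cochain complex $\left( \mathfrak{Z},\mathfrak{d}\right) $ is literally the direct sum of the two subcomplexes $\left( \mathcal{Z}^{\ast}\left( L\right) ,\delta\right) $ and $\left( \Lambda_{J}^{0,\ast}\left( \xi\right) \otimes \xi,\overline{\partial}_{J}\right) $ (in particular each block squares to zero, since $\mathfrak{d}\circ\mathfrak{d}=0$), and since cohomology commutes with direct sums, passing to degree $1$ gives $H^{1}\left( \mathfrak{Z},\mathfrak{d}\right) =H^{1}\left( \mathcal{Z}^{\ast}\left( L\right) ,\delta\right) \times H^{1}\left( \Lambda_{J}^{0,\ast}\left( \xi\right) \otimes \xi,\overline{\partial}_{J}\right) $. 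Combined with Theorem \ref{Tangent moduli} this is the first assertion. For completeness one also checks that this identification does not depend on the choice of couple with $H=0$, which follows from Proposition \ref{change couple} and Proposition \ref{Iso cohomologie}; alternatively the splitting can be read off directly from Theorem \ref{Caract T moduli}, using Lemma \ref{Rel H Y H} to see that with $H=0$ the $\mathcal{G}$-action moves $\beta$ by $\delta\left( \gamma\left( Y\right) \right) $ and $P$ by $-\overline{\partial}\left( Y-\gamma\left( Y\right) X\right) $, these two increments being independent as $Y$ ranges over $\mathcal{H}\left( L\right) $.

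For the second assertion, by Theorem \ref{Caract T moduli} the tangent vector $\left( \beta ,P\right) $ at the origin of any $\mathfrak{MC}_{\delta ,\mathcal{LF}}\left( L\right) $-valued curve satisfies $\delta\beta =0$ and $\overline{\partial}P=-\beta^{0,1}\wedge H$, hence lies in $\mathcal{A}$ and defines a class in $H^{1}\left( \mathfrak{Z},\mathfrak{d}\right) $. By definition $T_{\left[ 0\right] }\left( \mathfrak{MC}_{\delta ,\mathcal{LF}}\left( L\right) /\thicksim\right) $ is precisely the set of these classes, so it is a subset of $H^{1}\left( \mathfrak{Z},\mathfrak{d}\right) $; transporting this inclusion through the isomorphism of the previous paragraph produces the canonical injection of $T_{\left[ 0\right] }\left( \mathfrak{MC}_{\delta ,\mathcal{LF}}\left( L\right) /\thicksim\right) $ into $H^{1}\left( \mathcal{Z}^{\ast}\left( L\right) ,\delta\right) \times H^{1}\left( \Lambda_{J}^{0,\ast}\left( \xi\right) \otimes \xi,\overline{\partial}_{J}\right) $.

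There is essentially no obstacle once Theorem \ref{Tangent moduli} and Lemma \ref{exact} are available; the only point that needs a little care is that $\mathcal{A}$, $\mathfrak{d}$ and $\thicksim_{\mathcal{G}}$ are all defined in terms of a fixed $DGLA$ defining couple, so the argument must be run with the couple for which $H$ vanishes, after which the direct-sum decomposition of the complex, and hence of its cohomology, is immediate.
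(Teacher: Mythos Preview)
Your proof is correct and follows essentially the same route as the paper: invoke Theorem \ref{Tangent moduli} to identify $\mathcal{A}/\thicksim_{\mathcal{G}}$ with $H^{1}(\mathfrak{Z},\mathfrak{d})$, use exactness (Lemma \ref{exact}) to pick a defining couple with $H=0$, observe that $\mathfrak{d}$ then splits as $\delta\oplus\overline{\partial}_{J}$, and read off the product decomposition of cohomology. You are in fact somewhat more thorough than the paper, which does not explicitly spell out the argument for the canonical injection in the second assertion nor comment on independence of the chosen couple.
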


\begin{proof}
By Theorem \ref{Tangent moduli} $\ \mathcal{A}/\thicksim _{\mathcal{G}}$ is
canonically isomorphic to $H^{1}\left( \mathfrak{Z,d}\right) $. There exists
a defining couple $\left( \gamma ,X\right) $ such that the $\left(
0,1\right) $-form $H$ associated to $\left( \gamma ,X\right) $ vanishes. It
follows that $\mathfrak{d}=\delta \oplus \overline{\partial }_{J}$ and so%
\begin{equation*}
H^{1}\left( \mathfrak{Z,d}\right) =H^{1}\left( \mathcal{Z}^{\ast }\left(
L\right) ,\delta \right) \times H^{1}\left( \Lambda _{J}^{0,\ast }\left( \xi
\right) \otimes \xi ,\overline{\partial }_{J}\right) .
\end{equation*}
\end{proof}

\begin{example}
Let $L=\mathbb{CP}_{n}\times S^{1}$with its exact Levi flat structure given
by the DGLA\ defining couple $\left( \gamma ,X\right) =\left( dt,\frac{%
\partial }{\partial t}\right) $, where $t$ runs in $S^{1}$. Since $\delta
=d_{b}$ and $\mathbb{CP}_{n}$ is simply connected and infinitesimally rigid,
it follows that $H^{1}\left( \mathfrak{Z,d}\right) =0$.
\end{example}

\renewcommand\baselinestretch{1} 
\bibliographystyle{amsplain}
\bibliography{defintr,deformations}

\end{document}